\newcommand{\ba}{\mathbf{a}}
\newcommand{\bb}{\mathbf{b}}
\newcommand{\bp}{\mathbf{p}}
\newcommand{\bs}{\mathbf{s}}
\newcommand{\bt}{\mathbf{t}}
\newcommand{\bu}{\mathbf{u}}
\newcommand{\bv}{\mathbf{v}}
\newcommand{\bw}{\mathbf{w}}
\newcommand{\bx}{\mathbf{x}}
\newcommand{\by}{\mathbf{y}}
\newcommand{\bz}{\mathbf{z}}
\newcommand{\R}{\mathbb{R}}
\newcommand{\Z}{\mathbb{Z}}
\newcommand{\N}{\mathbb{N}}
\newcommand{\Q}{\mathbb{Q}}
\newcommand{\Lcal}{\mathcal{L}}
\newcommand{\Dcal}{\mathcal{D}}
\newcommand{\Tcal}{\mathcal{T}}
\newcommand{\Rcal}{\mathcal{R}}
\DeclareMathOperator{\conv}{conv}
\DeclareMathOperator{\aff}{aff}
\DeclareMathOperator{\vol}{vol}
\DeclareMathOperator{\spn}{span}
\DeclareMathOperator{\dir}{dir}
\DeclareMathOperator{\ldiam}{diam_\Z}
\DeclareMathOperator{\diam}{diam}
\DeclareMathOperator{\nvol}{nvol}
\DeclareMathOperator{\blocks}{blocks}
\DeclareMathOperator{\rem}{rem}
\DeclareMathOperator{\LD}{LD}
\DeclareMathOperator{\Ehr}{L}
\DeclareMathOperator{\GL}{GL} 
\DeclareMathOperator{\BG}{\mathsf{BG}} 
\DeclareMathOperator{\NP}{\mathsf{NP}} 
\newcommand{\st}{\, \mathrm{:}\,}
\newcommand{\lp}{\left(}
\newcommand{\rp}{\right)}
\newcommand{\lb}{\left\{}
\newcommand{\rb}{\right\}}
\definecolor{darklav}{rgb}{0.45, 0.31, 0.59}
\definecolor{lightgreen}{rgb}{0.53, 0.66, 0.42} 
\definecolor{etonblue}{rgb}{0.59, 0.78, 0.64} 
\definecolor{frenchblue}{rgb}{0.0, 0.45, 0.73} 
\definecolor{lavender}{rgb}{0.84, 0.79, 0.87}
\definecolor{lilac}{rgb}{0.78, 0.64, 0.78} 
\definecolor{darkyellow}{rgb}{1.0, 0.75, 0.0} 
\definecolor{redgrapefruit}{rgb}{1.0, 0.44, 0.37} 
\definecolor{brightube}{rgb}{0.82, 0.62, 0.91} 
\definecolor{stronggreen}{rgb}{0.0, 0.42, 0.24} 
\definecolor{darkpurp}{rgb}{0.36, 0.22, 0.33}
\definecolor{eggplant}{rgb}{0.38, 0.25, 0.32}
\definecolor{frenchlilac}{rgb}{0.53, 0.38, 0.56}
\definecolor{mauvetaupe}{rgb}{0.57, 0.37, 0.43}
\definecolor{stoneblue}{rgb}{0.45, 0.66, 0.76}
\definecolor{linen}{rgb}{0.98, 0.94, 0.9}
\title{\scshape On Lattice Diameter Segments and \mbox{A Discrete Borsuk Partition Problem}}
\date{}
\author[1]{Anouk E.~Brose \thanks{Corresponding author}} 
\author[1]{Jes\'us A.~De Loera}
\author[1]{Gyivan Lopez-Campos}
\author[1]{Antonio J.~Torres}
\affil[1]{\small \scshape Department of Mathematics, University of California, Davis}
\begin{document}

\maketitle

\mscclasses{\emph{Mathematics Subject Classification.}
52A38, 
52B20, 
52B55, 
52C07, 
52C17, 
68W35. 
}


\keywords{\emph{Key words and phrases.} lattice diameter; lattice lines; lattice segments; lattice polygons; Ehrhart theory; computational complexity; Borsuk's partition problem.}

\begin{abstract}
    \noindent The lattice diameter of a bounded set $S \subset \R^d$ measures the maximal number of lattice points in a segment whose endpoints are lattice points in $S$. Such a segment is called a lattice diameter segment of $S$. This simple invariant yields interesting applications and challenges. We describe a polynomial-time algorithm that computes lattice diameter segments of lattice polygons and show that computing lattice diameters of semi-algebraic sets in dimensions three and higher is $\NP$-hard.
    We prove that the function that counts lattice diameter segments in dilations of a lattice polygon is eventually a quasi-polynomial in the dilation factor.
    We also study the number of \mbox{directions that} lattice diameter segments can have. Finally, we prove a Borsuk-type theorem on the number of parts needed to partition a set of lattice points such that each part has strictly smaller \mbox{lattice diameter}.
\end{abstract}

\section{Introduction}  

We study a fundamental geometric invariant of compact sets, called the \textit{lattice diameter}, with a variety of applications in discrete geometry, integer optimization, and the geometry of numbers. We are not the first to study lattice diameters.
This notion was introduced by Corzatt and \mbox{Stolarsky~\cite{Corzatt_PhD}, 
studied} by Alarcon~\cite{Alarcon_PhD} and later by Bárány and Füredi~\cite{Barany_Furedi}. Here are the key definitions of this paper: 

\begin{definition}\label{def:ld-non-convex}
    For a compact set $S \subset \R^d$, define $$\ldiam(S):=\max_{\bx,\by \in S \cap \Z^d} |\conv(\{\bx,\by\}) \cap \Z^d|-1$$
    as the \emph{lattice diameter} of $S$. If $\bx$ and $\by$ maximize $\ldiam(S)$, we call $\conv(\{\bx,\by\})$ a \emph{lattice diameter segment}. Similarly, $\aff(\{\bx,\by\})$ is a \emph{lattice diameter line}, and finally, $\by-\bx$ or any of its non-zero multiples is called a \emph{lattice diameter direction}. 
\end{definition}

Our paper investigates properties of lattice diameter segments and algorithms to compute them.

Note that the lattice diameter of $S$ and $\conv(S)$ can differ when $S$ is non-convex. When $S$ is a convex body, then a lattice diameter line is an affine lattice line $L$ such that $|L \cap S \cap \Z^d|$ is maximal among all affine lattice lines. In this case the lattice diameter of $S$ coincides with the lattice diameter of $\conv(\{S \cap \Z^d\})$, a lattice polytope, thus we focus on studying lattice diameters of lattice polytopes. The Euclidean diameters and lattice diameters of lattice polytopes need not correlate. For fixed lattice diameter, the Euclidean diameter can be arbitrarily large (\Cref{fig:unbounded_ed}). Moreover, Euclidean diameter lines always pass through vertices \cite[Lemma~1.7]{GritzmannKlee1992}, while lattice diameter lines \mbox{may not (\Cref{fig:LL_interior}).}
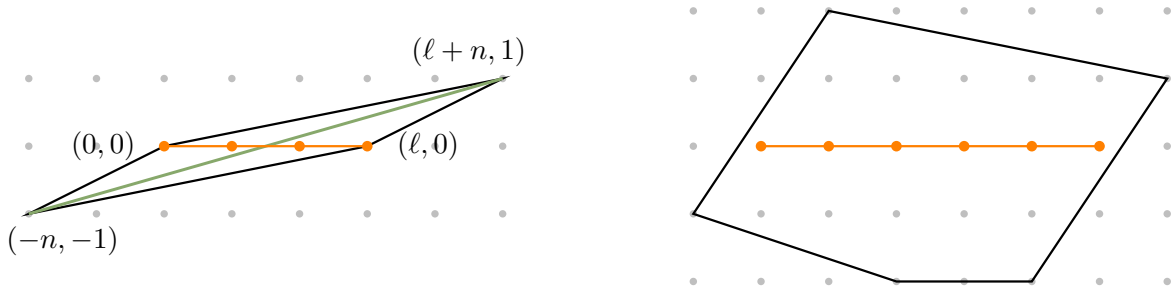
\begin{figure}[ht!]
  \centering

  \begin{subfigure}[t]{0.48\textwidth}
    \centering
    \begin{tikzpicture}[thick, scale=0.9, baseline=(current bounding box.north)]
      \path[use as bounding box] (0,0) rectangle (7,5);

      \foreach \x in {0,...,7} {
        \foreach \y in {2,...,4} {
          \node at (\x,\y) [circle, draw=lightgray, fill=lightgray, minimum width=2pt] {};
        }
      }

      \draw[line width=0.3mm] (0,2) -- (5,3) -- (7,4) -- (2,3) -- cycle;
      \draw[line width=0.4mm, lightgreen] (0,2) -- (7,4);
      \draw[line width=0.3mm, orange]     (2,3) -- (5,3);

      \foreach \x in {2,...,5} \node at (\x,3) [circle, fill=orange, minimum width=4pt] {};

      \node at (1.1,3) [black]{$(0,0)$};
      \node at (5.9,3) [black]{$(\ell,0)$};
      \node at (0.5,1.6) [black]{$(-n,-1)$};
      \node at (6.5,4.4) [black]{$(\ell+n,1)$};
    \end{tikzpicture}
    \caption{Lattice polygons with fixed lattice diameter \mbox{(orange)} and \mbox{unbounded} Euclidean diameter (green).}
    \label{fig:unbounded_ed}
  \end{subfigure}\hfill
  \begin{subfigure}[t]{0.48\textwidth}
    \centering
    \begin{tikzpicture}[thick, scale=0.9, baseline=(current bounding box.north)]
      \path[use as bounding box] (0,0) rectangle (7,5);

      \foreach \x in {0,...,7} {
        \foreach \y in {1,...,5} {
          \node at (\x,\y) [circle, draw=lightgray, fill=lightgray, minimum width=2pt] {};
        }
      }
      \draw[line width=0.3mm] (5,1) -- (7,4) -- (2,5) -- (0,2) -- (3,1) -- cycle;
      \draw[line width=0.3mm, orange] (1,3) -- (6,3);
      \foreach \x in {1,...,6} \node at (\x,3) [circle, fill=orange, minimum width=4pt] {};
    \end{tikzpicture}
    \caption{A lattice diameter segment contained in the interior of a lattice polygon.}
    \label{fig:LL_interior}
  \end{subfigure}

  \caption{Examples of lattice diameter segments in lattice polygons.}
  \label{fig:two_subfigs}
\end{figure}

Lattice lines and lattice diameter lines have several interesting connections. For example, in discrete tomography Gardner and Gritzmann have studied the reconstruction of lattice point sets from ``$X$-rays'', i.e., the collection of line sums over parallel lattice lines~\cite{gardnergritzmann97}. Algorithmic connections to combinatorial optimization also exist through the notion of \emph{lattice width}, which is used in algorithms for integer programming. The lattice width has been shown to be bounded by the lattice diameter \cite{Barany_Furedi}. Motivated by questions in the geometry of numbers, the volumes of sections and projections of convex bodies have been studied for many years; see the surveys \cite{giannopoulos2023inequalities, nayar2023extremal}. Analogous questions for the number of lattice points have only been studied more recently \cite{gard05,freyerhenk22,freyerhenk24}. Algorithmically, it is a challenge to determine a section of a convex body with maximal volume, but algorithms exist for convex polytopes~\cite{BDLM}. However, computing a section of a lattice polytope with the most lattice points is still an open problem. In the plane, this problem reduces to finding a lattice diameter segment, which we solve here. 


Furthermore, classifications of convex lattice polytopes have been studied with respect to different invariants. This is of particular interest in algebraic geometry, where reflexive polytopes correspond to specific toric Fano varieties that have ties to mirror symmetry in physics~\cite{batyrev1999classification, kasprzyk2006toric}. Arnold, B\'ar\'any, Pach, Vershik, and others~\cite{arno80, bara92, bara92a} have classified lattice polytopes by their volume while Averkov, Lagarias, Nill, Weismantel, Ziegler, and others \cite{lagariasZiegler1991bounds, averkov2011maximal, nillziegler2011projecting} have continued the classification, based on the number of interior lattice points. We stress that classifying lattice polytopes by their lattice diameter is also possible. A simple result by S. Rabinowitz \cite{rabi89} shows that if $\ldiam(P)<m$, then $|P \cap \Z^d| \leq m^d$.
Also in more recent work, Dillon and Arun~\cite{HollowP} asymptotically bound the number of vertices that a lattice polytope with fixed lattice \mbox{diameter can have.}




Another theme of our paper is Ehrhart theory. In its simplest version, for a $d$-dimensional lattice polytope $P$, and $k \in \N$, the Ehrhart counting function, $\Ehr_P(k) = |kP \cap \Z^d|$, agrees with a polynomial in $k$ of degree $d$. This foundational result extends to rational, multivariate, and weighted versions, using generating functions and featuring reciprocity theorems \cite{beckrobins}. Here we count not individual lattice points, but subsets of them.
It can be shown that there is an Ehrhart theorem for counting lattice lines with fixed direction in a rational polytope, that pass through a lattice point of the rational polytope \cite{aebThesis}.
In this paper we study the counting function of lattice diameter lines (or segments) of a lattice polygon.

A final motivation for our work is the famous Borsuk's partition problem. In 1933, Borsuk posed the following question in \cite{borsuk1933drei}:
\begin{quote}
    \emph{Can every bounded set $S \subset \R^d$ be partitioned into $d+1$ sets with \mbox{smaller diameter?}}
\end{quote}
Here \textit{diameter} refers to the Euclidean diameter, i.e., $\diam(S) := \max_{\bx,\by \in S} \|\bx-\by\|_2$ where $\| \cdot \|_2$ is the Euclidean norm.
The answer to Borsuk's question is positive for $d=2$, see \cite{borsuk1933drei}, and for $d=3$, see \cite{perkal1947subdivision}, \cite{eggleston1955covering} and \cite{grunbaum1957simple}. In higher dimensions it was believed to be true until Kahn and Kalai \cite{kahn1993counterexample} constructed a counterexample for $d=1325$ and for each $d>2014$. Today, there are known counterexamples in dimensions $64$ and higher, see \cite{jenrich201464}, but the problem is still open for $4\leq d\leq 63$.
More generally, the problem of partitioning a set into sets with smaller diameter is studied for specific classes of bounded sets. In this paper, we study this problem for sets of lattice points with respect to the lattice diameter instead of the Euclidean diameter. We refer to this challenge as the \textit{discrete Borsuk partition problem}.

\subsection*{Our Contributions}

In \Cref{sec:LD-poly} we show that lattice diameter lines of lattice polygons can be computed efficiently:

\begin{manualthm}\emph{\textbf{Theorem~2.4.}}
    Let $P \subset \R^2 $ be a lattice polygon. There exists a polynomial-time algorithm that computes a lattice diameter line of $P$ in every direction in which such a line exists. In particular, the lattice diameter of $P$ can be computed in polynomial time.
\end{manualthm} 

In contrast, we show the following hardness result in \Cref{sec:NP-hard}:

\begin{manualthm}\textbf{\emph{Theorem~2.7.}}
    Let $d \geq 3$ and let $K \subset \R^d$ be a $d$-dimensional bounded semi-algebraic set. \mbox{Computing} a lattice diameter of $K$ is an $\NP$-hard problem. 
\end{manualthm}

In \Cref{sec:QP} we generalize Ehrhart theory to counting the set of lattice diameter lines. While counting the number of lattice lines in a fixed direction that intersect $P$ in a lattice point is not hard \cite{aebThesis}, counting those that are lattice diameter lines is much more difficult. Surprisingly, we have an Ehrhart-type result.

\begin{definition}\label{def:LD_P}
    For a lattice polytope $P$, let $\LD_P(k)$ denote the number of lattice diameter lines of the dilated polytope $kP$.
\end{definition}

\begin{manualthm}
    \emph{\textbf{Theorem~3.6.}} Let $P \subset \R^2$ be a lattice polygon. There exists a positive integer $q$ such that \mbox{$\LD_P: \N_{\geq q} \to \N$} agrees with a quasi-polynomial function of degree one whose period divides $q$.
\end{manualthm}

 In \Cref{sec:directions-Borsuk} we begin by generalizing two results by Alarcon~\cite{Alarcon_PhD} to higher dimensions. The first result bounds the total number of lattice diameter directions:

\begin{manualthm}\textbf{\emph{Theorem~4.3.}}
    Let $P\subset \R^d$ be a lattice $d$-polytope with $\ldiam(P)=1$. Then $P$ has at most $\binom{2^d}{2}$ lattice diameter directions and this bound is best possible.
\end{manualthm}

The second generalization is about a local property of lattice diameter segments. In fact, we prove it more generally for bounded sets. This result is a key ingredient to prove the discrete version of Borsuk's partition problem.

\begin{manualthm}\textbf{\emph{Lemma~4.6.}}
    Let $S\subset \Z^d$ be a bounded set and let $\bx \in S$. Then, the number of lattice diameter segments of $S$ that have $\bx$ as an endpoint is bounded by $2^d-1$, and this bound is best possible.
\end{manualthm}

To study the discrete version of Borsuk's partition problem, we define the following notion:

\begin{definition}
    Let $S \subset \Z^d$ be a bounded set. Define the \textit{lattice Borsuk number} of $S$, $\beta_\Z(S)$, as the minimal size of a partition of $S$ where each part has a smaller lattice diameter than $\ldiam(S)$.
\end{definition}

Finally, the following theorem answers the discrete version of Borsuk's partition problem:

\begin{manualthm}\textbf{\emph{Theorem~4.8.}}
    Let $S \subset \Z^d$ be a bounded set. Then, $\beta_\Z(S) \leq 2^d$ and this bound is best possible.
\end{manualthm}

In particular, \Cref{Thm:Borsuk} says that $\beta_\Z(P \cap \Z^d) \leq 2^d$ for $d$-dimensional polytopes $P$. Note that we are not partitioning $P$ into smaller polytopes, i.e., the parts of $P \cap \Z^d$ are not necessarily the lattice points of a polytope. The bound $2^d$ is also best possible for polytopes; it is attained for the lattice points of a $d$-dimensional cube.

\subsection*{Notation}
We use basic notions and notation as can be found in the textbooks~\cite{grub07,zieglerbook}.
A $d$-polytope $P$ is the convex hull of finitely many points in $\R^d$ whose affine hull is $d$-dimensional. It is called a \emph{lattice} (or \emph{rational}) $d$-polytope if its vertices belong to the integer lattice $\Z^d$ (or to $\Q^d)$. We say two lattice $d$-polytopes $P$ and $Q$ are unimodularly equivalent if there exists a unimodular matrix $A \in \GL(d,\Z)$ and $\bt \in \Z^d$ such that $P=AQ+\bt$, i.e., these transformations correspond to lattice preserving maps. We write $\spn(S), \, \aff(S)$, and $\conv(S)$ for the linear, affine, and convex hull of a set $S \subset \R^d$, respectively.
The segment between two points $\bx,\by \in \R^d$ is denoted $[\bx,\by]:=\conv(\{\bx,\by\})$ and when $\bx,\by \in \Z^d$, we call this a \textit{lattice segment}. A \emph{line} will always mean an \emph{affine line} and a \emph{lattice line} is a line that passes through at least two lattice points. We say that a line has direction $\bu$ if it can be written as $\bx+\spn(\bu)$ for some $\bx \in \R^d$. We say $\bu$-lattice line (or segment) if the lattice line (or segment) has direction $\bu$. Furthermore, from now on a \textit{diameter direction} means a lattice diameter direction. The inner product we use is the standard inner product on $\R^d$, denoted $\langle \cdot, \cdot \rangle$. A hyperplane and its closed half-spaces are denoted $H(\ba,\alpha):= \{ \bx \in \R^d \st \langle \ba, \bx \rangle = \alpha \}$, $H^+(\ba,\alpha)$ and $H^{-}(\ba,\alpha)$, respectively. For a Lebesgue measurable set $S\subset \R^d$, we write $\vol(S)$ which means the intrinsic volume of $S$ in $\aff(S)$. Furthermore, a vector $\bu=(u_1, \ldots, u_d) \in \Z^d$ is called primitive if $\gcd(u_1, \ldots, u_d) =1$. For a positive integer $n$ we write $[n]:=\{1, \ldots, n\}$ and $\binom{[n]}{k}$ denotes the $k$-element subsets of $[n]$. For integers $a,q \in \Z$ we write $a \bmod q$ to mean the representative of $a$ in $\{0, \ldots, q-1\}$.
In \Cref{sec:directions-Borsuk}, we use graph theory and standard notation as in \cite{bondy2008graph}. A graph is a simple undirected graph $G=(V,E)$, where $V$ is a finite set and $E \subseteq \binom{V}{2}$. In this context, $V$ or $V(G)$ is the set of \emph{vertices} and $E$ or $E(G)$ is the set of \emph{edges} of the graph $G$. Two vertices $u,v\in V$ are called \emph{adjacent} if $\{u,v\} \in E$.

\section{Computing lattice diameters and lattice diameter lines}\label{sec:LD-computation}

In this section we will explain and prove \Cref{thm:LD-poly} and \Cref{thm:NP-hard}.

\subsection{A polynomial-time algorithm to compute lattice diameter lines in the plane}\label{sec:LD-poly}

Let $P$ be a lattice polygon. We present an algorithm that runs in polynomial time in the input size of $P$ and computes a lattice diameter line of $P$ for every direction in which one exists. 
The algorithm relies on the following three claims:
\begin{itemize}
    \item For every diameter direction $\bu$, there exists a lattice diameter line in direction $\bu$ passing through a vertex $\bv$ and an edge $e$ such that some translate of $\aff(e)$ is tangent to $P$ at $\bv$. In this case, we say that $\bv$ is an \textit{opposite vertex} to the edge $e$. 
    \item For every pair $(e,\bv)$, where $e$ is an edge and $\bv$ is an opposite vertex to $e$, one can compute in polynomial time a \textit{local lattice diameter line} at $\bv$ of the triangle $\conv \{ e, \bv\}$, i.e., a line through $\bv$ that intersects $\conv(\{e,\bv\})$ in the most lattice points.
    \item At most three lattice diameter lines pass through any fixed vertex. This was proven by Alarcon \cite[Lemma 2.4]{Alarcon_PhD} (and it also follows from our \Cref{Lemma:MaxDegree}).
\end{itemize}

The algorithm is as follows:

\begin{algorithm}[ht!]
\caption{\textsc{Computing Lattice Diameter Lines}}
\textbf{Input:} a lattice polygon $P$ given by its vertices \\
\textbf{Output:} at least one lattice diameter line for each direction in which a lattice diameter line exists
\begin{algorithmic}[1]
\State Initialize two empty sets: $\Tcal$ for triangles, and $\Lcal$ for candidate lines
\ForAll{edges $e$ of $P$}
    \State compute all opposite vertices $\bv$ to $e$ (each edge has at most two opposite vertices)
    \State form triangles $\conv(\{e,\bv\})$ and add them to $\Tcal$
\EndFor
\ForAll{triangles in $\Tcal$}
    \State compute up to three local lattice diameter lines of $T=\conv(\{e,\bv\})$ at $\bv$
    \State add these local lattice diameter lines to $\Lcal$
\EndFor
\State \Return all lines in $L \in \Lcal$ that maximize $|L \cap P \cap \Z^2|$
\end{algorithmic}
\end{algorithm}

We proceed to prove some lemmas. Recall that for a primitive vector $\bu \in \Z^d\setminus \{0\}$, and a $\bu$-lattice line $L$, the lattice normalized length of the segment $L \cap P$ is $\nvol(L \cap P) := \frac{\vol(L \cap P)}{\|\bu\|}$.

\begin{lemma}\label{lem:nvol}
    Let $P$ be a lattice polytope and $L$ be a line that intersects $P$. Then
    \begin{align*}
    \left \lfloor \nvol(L \cap P) \right \rfloor \leq |L \cap P \cap \Z^d| \leq  \left \lfloor \nvol(L \cap P) \right \rfloor + 1
    \end{align*}
and the upper bound is attained if $L$ contains a lattice point on the boundary of $P$.
\end{lemma}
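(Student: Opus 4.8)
Recall that $\nvol$ is defined for lattice lines, so $L$ is a lattice line; fix a primitive direction vector $\bu$ of $L$. The plan is to reduce to a coordinate picture. Completing $\bu$ to a $\Z$-basis of $\Z^d$ yields a unimodular matrix, and composing it with a suitable lattice translation gives a lattice-preserving affine map carrying $L$ to the $x_1$-axis $\R\be_1$; such maps preserve the property of being a lattice polytope, the set of lattice points, and the quantities $\nvol(L\cap P)$ and $|L\cap P\cap\Z^d|$. After this reduction $\bu=\be_1$ and $\|\bu\|=1$, and by convexity $L\cap P=[a,b]\times\{0\}^{d-1}$ for some reals $a\le b$ (nonempty by hypothesis); hence $\nvol(L\cap P)=b-a$ and $|L\cap P\cap\Z^d|=N$, where $N:=|\Z\cap[a,b]|$.

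The two inequalities are now elementary floor/ceiling estimates. If $N=0$ there is no integer in $[a,b]$, which forces $b-a<1$, so $\lfloor b-a\rfloor=0=N$ and both bounds hold. If $N\ge 1$ then $N=\lfloor b\rfloor-\lceil a\rceil+1$; combining $a\le\lceil a\rceil<a+1$ with $b-1<\lfloor b\rfloor\le b$ gives $b-a-1<N\le b-a+1$, and since $N$ is an integer this is exactly $\lfloor b-a\rfloor\le N\le\lfloor b-a\rfloor+1$.

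For the final claim, suppose $L$ contains a lattice point $\bz$ on $\partial P$; in the normalized coordinates $\bz=(k,0,\dots,0)$ with $k\in\Z\cap[a,b]$, so $N\ge 1$. If $k\in\{a,b\}$ then $a\in\Z$ or $b\in\Z$, and a one-line computation (e.g.\ if $b\in\Z$ then $N=b-\lceil a\rceil+1=\lfloor b-a\rfloor+1$) shows the upper bound is attained. If instead $a<k<b$, then $\bz$ lies in the relative interior of the segment $L\cap P$, so a supporting hyperplane of $P$ at $\bz$ must contain this whole segment, hence the line $L$; thus $L\cap P$ lies in a proper face $F$ of $P$. When $P$ is a polygon---the setting of this section---$F$ is an edge with $L\subseteq\aff(F)$, so that $\aff(F)=L$ and hence $F=L\cap P$ is a lattice segment on the $x_1$-axis; this forces $a,b\in\Z$ and again $N=\lfloor b-a\rfloor+1$.

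I expect the only delicate point to be this last configuration: a boundary lattice point of $L$ need not be an endpoint of the chord $L\cap P$, since $L$ may run along a lower-dimensional face of $P$. For polygons that face is an edge with lattice endpoints and the claim is immediate; in higher dimensions one reduces to the face $F$, a lattice polytope of smaller dimension carrying the same intersection data. The remaining ingredients are routine, chiefly that a primitive integer vector extends to a lattice basis.
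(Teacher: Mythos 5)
Your proof of the two inequalities matches the paper's argument in substance: the paper parametrizes $L=\{\bx_0+t\bu\}$ with $\bx_0$ a lattice point of $L$ instead of applying a unimodular change of coordinates, but in both cases the problem reduces to counting integers in an interval $[a,b]$ with $\nvol(L\cap P)=b-a$, and your floor/ceiling estimates are correct.

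Where you genuinely diverge is the final clause. The paper disposes of it via the case ``at least one of $t_1,t_2$ is an integer,'' i.e.\ it tacitly identifies ``$L$ contains a lattice point on $\partial P$'' with ``an endpoint of the chord $L\cap P$ is a lattice point.'' You correctly flag that these are not the same and supply the missing argument for polygons: if the boundary lattice point lies in the relative interior of the chord, then $L\cap P$ is contained in an edge, hence is a lattice segment, and both endpoints are integral. That is a real improvement in rigor for $d=2$, which is the only setting in which the paper invokes the boundary clause for a point that is not an endpoint of the chord. However, your one-line extension to $d\ge 3$ (``reduce to the face $F$'') does not close the argument: if $F$ has dimension $\ge 2$ and the lattice point lies in the relative interior of both $F$ and the chord, there is no smaller face to descend to, and the clause actually fails. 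Concretely, let $F=\conv\{(-1,1),(0,-1),(1,1)\}$, put $P=\conv\bigl(F\times\{0\}\cup\{(0,0,1)\}\bigr)\subset\R^3$, and let $L$ be the $x_1$-axis; then $L\cap P=[-\tfrac12,\tfrac12]\times\{(0,0)\}$ contains the boundary lattice point $(0,0,0)$ but only $1=\lfloor\nvol(L\cap P)\rfloor$ lattice point rather than $\lfloor\nvol(L\cap P)\rfloor+1$. So the last sentence of the lemma should be read as requiring a lattice point that is an endpoint of $L\cap P$ (which is how the paper uses it outside the plane); your $d=2$ argument is correct and should be kept, while the higher-dimensional remark should be dropped or restricted accordingly.
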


\begin{proof}
Parametrize the line as $L = \{\bx_0 + t\bu : t \in \mathbb{R}\}$, where $\bx_0$ is an arbitrary lattice point on $L$ and $\bu \in \mathbb{Z}^d$ is a primitive vector in the direction of $L$. Then, the intersection $L \cap P$ is a segment with endpoints $\ba = \bx_0 + t_1\bu$ and $\bb = \bx_0 + t_2\bu$, for some real numbers $t_1 \leq t_2$.
The lattice points in $L \cap P$ are precisely those points of the form $\bx_0 + k\bu$ with $k \in \mathbb{Z} \cap [t_1, t_2]$. Moreover,
\[
\nvol(L \cap P) = \frac{\|\bb - \ba\|}{\|\bu\|} = \frac{\|(t_2 - t_1) \bu\|}{\|\bu\|} = |t_2 - t_1|=t_2-t_1.
\]

If $t_1$ is not an integer, then $|L \cap P \cap \mathbb{Z}^d| 
= \lfloor t_2 \rfloor - \lceil t_1 \rceil + 1 = \lfloor t_2 \rfloor - \lfloor t_1 \rfloor$
and the claim follows from the observation
$\lfloor t_2-t_1 \rfloor \leq \lfloor t_2 \rfloor - \lfloor t_1 \rfloor \leq \lfloor t_2-t_1 \rfloor +1$. If at least one of $t_1$ or $t_2$ is an integer, then
$|L \cap P \cap \Z^d| = \lfloor t_2 \rfloor - \lceil t_1 \rceil + 1 = \lfloor t_2 - t_1 \rfloor + 1=\lfloor \nvol(L \cap P) \rfloor +1$.
\end{proof}









As mentioned above, we say that a vertex $\bv$ of a lattice polygon $P$ is an \emph{opposite vertex} to an edge $e$ of $P$ if the furthest translate of $\aff(e)$ that intersects $P$ passes through $\bv$. More formally, if $\ba$ is an outward normal vector of the edge $e$, then $\langle \ba, \bv \rangle = \min_{\bx \in P} \langle \ba,\bx \rangle$. See \Cref{fig:edge-far-vertex}.

\begin{figure}[H]
    \centering
        \begin{tikzpicture}[thick, scale=0.8]\vspace{0.5cm}
        \draw[line width=0.3mm, white](0,0.5) -- (7,0.5) ;
    
        \foreach \x in {-2,...,8} {
            \foreach \y in {0,...,5} {
                \node at (\x,\y) [circle, draw=lightgray, fill= lightgray, minimum width=1.5pt] {};   
            }
        }
    
         \draw[line width=0.3mm, darklav, opacity=0.8](3.5,0) --+ (2.5,5);
         \draw[line width=0.3mm, darklav, opacity=0.8](-1,0) --+ (2.5,5);
         \draw[line width=0.3mm, black] (1,1) -- (0,2)-- (3,4) -- (4,4)  -- (5,3)  -- (4,1) -- cycle;
         \draw[line width=0.6mm, black] (5,3)  -- (4,1);
         \draw[line width=0.3mm, orange](0,2) -- (4.5,2);

         \draw[line width=0.3mm, darklav,-{Latex[round]}](6,1) --+ (0.8,-0.4);
         \node at (6,1) [circle, fill=black, minimum width=4pt] {};
         \draw(7,0.5)node[darklav]{$\ba$};
         \draw(5.8,0.5)node[black]{$(0,0)$};
         
         \foreach \p in {(0,2), (1,2),(2,2),(3,2), (4,2)} {
                \node at \p [circle, fill=orange, minimum width=4pt] {};
            }
       
       \draw(-0.5,2)node[orange]{$\bv$};
       \draw(2.35,2.37)node[orange]{$L \cap P$};
       \draw(5.5,2.4)node[black]{$\footnotesize \text{edge } e$};
       \draw(-2.2,2)node[orange]{$\footnotesize \text{opposite vertex} $};
       \draw(-0.7,3.5)node[darklav]{$H(\ba,\langle \ba, \bv \rangle)$};
       \draw(6,3.5)node[darklav]{$\footnotesize\aff(e)$};
 \end{tikzpicture} 
    \caption{A lattice diameter line passing through an edge and its opposite vertex.}
    \label{fig:edge-far-vertex}
\end{figure}
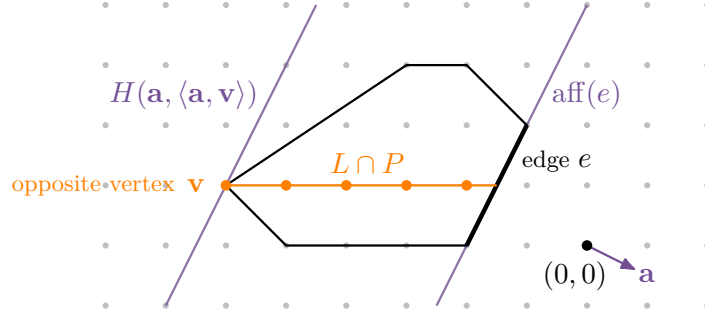

If $\bu \in \Z^2 \setminus \{0\}$, then we call a $\bu$-lattice line $L$ a \emph{$\bu$-lattice diameter line} of $P$, if among all lines in direction $\bu$, it contains the most lattice points in $P$. The following lemma shows that a $\bu$-lattice diameter line of $P$ can always be chosen to pass through an edge of $P$ and an opposite vertex. This follows from \cite[Lemma~1]{Barany_Furedi}, where it is stated without a proof. We include one here for completeness.

\begin{lemma}\cite[Lemma 1]{Barany_Furedi}\label{lemma:vertex_edge_pair}
Let $P$ be a lattice polygon and $\bu \in \Z^2 \setminus \{0\}$. There exists a $\bu$-lattice diameter line $L$ that passes through a vertex $\bv$ of $P$ and through an edge $e$ of $P$ with outward normal vector $\ba$ such that $\langle \ba, \bv \rangle = \min_{\bx \in P} \langle \ba,\bx \rangle$.
\end{lemma}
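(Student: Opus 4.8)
The plan is to start with an arbitrary $\bu$-lattice diameter line $L_0$ of $P$ and show that one can ``slide'' it within the family of parallel lines in direction $\bu$ until it hits both a vertex and an opposite edge, without ever decreasing the number of lattice points it contains in $P$. First I would set up coordinates: since $\bu \in \Z^2 \setminus \{0\}$ is (WLOG, after dividing by the gcd) primitive, there is a unimodular transformation taking $\bu$ to $e_1 = (1,0)$, so we may assume $\bu = e_1$ and that lattice lines in direction $\bu$ are exactly the lines $\{y = n\}$ for $n \in \Z$. For such a horizontal lattice line $L_n = \{y=n\}$, the quantity $|L_n \cap P \cap \Z^2|$ depends only on the length $\nvol(L_n \cap P) = \vol(L_n \cap P)$ of the horizontal slice, together with whether that slice has an integer endpoint; by \Cref{lem:nvol} it equals $\lfloor \nvol(L_n \cap P)\rfloor$ or $\lfloor \nvol(L_n \cap P)\rfloor + 1$, with the larger value attained whenever $L_n$ meets the boundary of $P$ in a lattice point.

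The key observation is that the function $n \mapsto \vol(L_n \cap P)$ (defined for all real heights, not just integers) is concave on the interval of heights meeting $P$, because $P$ is convex: this is the standard one-dimensional Brunn–Minkowski / Fubini fact that the section-length function of a convex body is concave on its support. Moreover this concave function is piecewise linear, with breakpoints only at heights of vertices of $P$, and on the top-most and bottom-most pieces it is actually linear and the slice $L_n \cap P$ has an endpoint lying on a single edge $e$ of $P$ whose relative interior we are crossing. Now take the diameter line $L_{n_0}$. Among all integer heights $n$ with $L_n \cap P \neq \emptyset$, the value $|L_n \cap P \cap \Z^2|$ is maximized at $n_0$. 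Using concavity of the length function together with the $\pm 1$ bracketing from \Cref{lem:nvol}, I would argue that we may replace $n_0$ by an integer height $n^*$ that is \emph{adjacent} to the height $h_v$ of some vertex $\bv$ of $P$ — more precisely, such that the closed slab between heights $n^*$ and $h_v$ contains no other vertex of $P$ in its interior — while keeping $|L_{n^*} \cap P \cap \Z^2| = |L_{n_0} \cap P \cap \Z^2|$. Indeed, if the optimal integer height is strictly between two consecutive vertex heights, the length function is linear there, so one of the two integer neighbours toward the ``peak'' of the concave function does at least as well; pushing to the peak and then stepping to an integer neighbour of a vertex height only loses at most one lattice point from the length term but simultaneously one can arrange the slice to acquire a lattice endpoint (or the peak coincides with a vertex height outright), balancing the bookkeeping. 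This is the step I expect to be the main obstacle: making the case analysis on ``is the optimum at a vertex height, and does the optimal slice already have a lattice endpoint'' fully airtight, since the $+1$ correction term in \Cref{lem:nvol} interacts delicately with rounding the concave piecewise-linear length function.

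Once we have an integer height $n^*$ with $L_{n^*}$ a $\bu$-lattice diameter line and with $n^*$ on the ``last'' linear piece on one side of an adjacent vertex $\bv$, the slice $L_{n^*} \cap P$ has one endpoint on an edge $e$ incident to that extremal behaviour, and $\bv$ is exactly the vertex of $P$ extremal in the direction normal to $e$, i.e. $\langle \ba, \bv\rangle = \min_{\bx \in P}\langle \ba, \bx\rangle$ where $\ba$ is the outward normal of $e$ — so $\bv$ is an opposite vertex to $e$ in the sense of the definition preceding \Cref{fig:edge-far-vertex}. Finally, I would translate $L_{n^*}$ to the line $L$ through $\bv$ parallel to $\bu$: because $n^*$ is the integer height nearest to $h_v$ on that side, concavity of the length function forces $\vol(L \cap P) \geq \vol(L_{n^*} \cap P) - $ (a small defect), and crucially $L$ passes through the lattice point $\bv$ on $\partial P$, so by the equality case of \Cref{lem:nvol} it attains the upper bound $\lfloor \nvol(L \cap P)\rfloor + 1$; a short comparison then shows $|L \cap P \cap \Z^2| \geq |L_{n^*} \cap P \cap \Z^2|$, hence $L$ is also a $\bu$-lattice diameter line. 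Undoing the unimodular change of coordinates gives the lemma in the original lattice. I would close by remarking that $L$ then meets $P$ in the vertex $\bv$ and crosses the relative interior of the edge $e$, which is the desired vertex–edge incidence.
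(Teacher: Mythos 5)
There are two genuine gaps here. The first is the one you flag yourself: the bookkeeping needed to slide a diameter line to a vertex while preserving its lattice-point count. The paper avoids this entirely by running the argument in the opposite direction: it first observes that any $\bu$-lattice line $L$ through a vertex which maximizes the \emph{continuous} length $\vol(L\cap P)$ among all $\bu$-lattice lines is automatically a $\bu$-lattice diameter line, because a line through a lattice point of $\partial P$ always attains the upper bound $\lfloor\nvol(L\cap P)\rfloor+1$ of \Cref{lem:nvol}, while every competitor $L'$ is bounded above by $\lfloor\nvol(L'\cap P)\rfloor+1\leq\lfloor\nvol(L\cap P)\rfloor+1$. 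One therefore never has to track how the floor function and the ``$+1$'' interact along a slide; it suffices to show that the continuous length is maximized by a line through a vertex, which is exactly your concavity/piecewise-linearity observation. As written, your step ``loses at most one lattice point from the length term but simultaneously acquires a lattice endpoint'' does not balance in general: if $L_{n^*}$ already has a lattice endpoint and the translate to the vertex is strictly shorter, you can genuinely lose a point, and you have not excluded this.

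The second gap is more serious: your identification of the opposite vertex is incorrect. The vertex $\bv$ required by the lemma is the minimizer of $\langle\ba,\cdot\rangle$ over $P$, where $\ba$ is the outward normal of the crossed edge $e$; equivalently, $\bv$ and $e$ lie on parallel supporting lines of $P$. This has nothing to do with $\bv$ sitting at a height (in the $\bu^{\perp}$-coordinate) adjacent to $n^*$. For the triangle with vertices $(0,0)$, $(10,0)$, $(5,3)$ and $\bu=(1,0)$, the top vertex $(5,3)$ is the height-adjacent vertex for every nearby slice, yet it is not opposite to either of the two slanted edges those slices cross; their opposite vertices are $(10,0)$ and $(0,0)$, respectively. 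Establishing the vertex--edge incidence requires a separate argument, which the paper supplies: if the length-maximizing line through a vertex $\bv$ crosses the relative interior of an edge $e$ and $\bv$ is not opposite to $e$, then translating the line toward the true opposite vertex $\tilde{\bv}$ strictly increases its length, a contradiction (together with a rotation-of-supporting-lines argument for the case where the line passes through two vertices). Your proposal contains no analogue of this step, so the conclusion of the lemma is not actually reached.
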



\begin{proof}
    \Cref{lem:nvol} implies that if a $\bu$-lattice line $L$ passes through a vertex of $P$, and satisfies $\vol(L\cap P) \geq \vol(L' \cap P)$ for all $\bu$-lattice lines $L'$, then $L$ is a $\bu$-lattice diameter line of $P$.

    First we prove that such a line exists. Let $L$ be \textit{any} line in direction $\bu$ satisfying $\vol(L\cap P) \geq \vol(L' \cap P)$ for all lines $L'$ in direction $\bu$ (by compactness of $P$ such a line exists). If $L$ passes through parallel edges, then a translate of $L$ along $\bu^{\perp}$ will pass through a vertex of $P$ and its intersection with $P$ will have the same length. Else, translating $L$ in one direction along $\bu^{\perp}$, until it passes through a vertex, will strictly increase the length of its intersection with $P$, a contradiction. This proves the existence of such a line, and necessarily, it will be a $\bu$-lattice line.
    
    So let $L$ be a $\bu$-lattice line that passes through some vertex of $P$ and satisfies
    $\vol(L\cap P) \geq \vol(L' \cap P)$ for all $\bu$-lattice lines $L'$.
    First, suppose that $L$ passes through the relative interior of an edge $e$ and through a vertex $\bv$ of $P$. Towards a contradiction, suppose $\langle \ba, \bv \rangle \neq \min_{\bx \in P} \langle \ba,\bx \rangle$. Let $\tilde{\bv}$ be a vertex for which $\min_{\bx \in P} \langle \ba,\bx \rangle$ is attained. It is an elementary trigonometry calculation to show that there exists a point $\bp \in [\bv,\tilde{\bv}] \subseteq P$ such that a translate of $L$ passing through $\bp$ also passes through $e$, and its intersection with $P$ is longer.
    For the second case, suppose $L$ passes through two vertices $\bv$ and $\bw$. Using convexity of $P$ and the fact that $L \cap P$ is the longest segment in $P$ in direction $\bu$ one can find parallel supporting hyperplanes (lines) of $P$ at $\bv$ and at $\bw$. Rotating them, until one of the lines becomes tangent to an edge of $P$ proves this case.
\end{proof}

Now consider all triangles $T_e=\conv(\{e,\bv_e\})$ where $e$ is an edge of $P$, and $\bv_e$ is an opposite vertex to $e$. \Cref{lemma:vertex_edge_pair} implies that for every diameter direction $\bu$ there exists a lattice diameter line that is a local lattice diameter line of $T_e$ at $\bv_e$ for some edge $e$. \Cref{lemma:min-gens} is the key ingredient to compute local lattice diameter lines of lattice triangles.

\begin{lemma}\label{lemma:min-gens}
    Let $T = \conv \{e,\bv\}$ be a lattice triangle, where $e$ is an edge of $T$ with outward normal vector $\ba$, and $\bv$ is its opposite vertex. If $\bw_1,\bw_2 \in T \cap \Z^2 \setminus \{\bv\}$ satisfy $\langle \ba, \bw_1 \rangle \leq \langle \ba, \bw_2 \rangle$, then for $L_1:= \aff(\{\bv,\bw_1\})$ and $L_2:= \aff(\{\bv,\bw_2\})$
    \[
    |L_1 \cap T \cap \Z^2| \geq |L_2 \cap T \cap \Z^2|.
    \]
\end{lemma}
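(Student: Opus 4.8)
The plan is to convert $|L_i \cap T \cap \Z^2|$ into an explicit function of $\langle \ba, \bw_i\rangle$ that is visibly non-increasing, using \Cref{lem:nvol} as the bridge. Write $\alpha := \langle \ba, \bx\rangle$ for $\bx \in e$ and $\beta := \langle \ba, \bv\rangle$. Because $\ba$ is an outward normal of $e$ and $\bv$ is the vertex opposite $e$, the functional $\langle \ba, \cdot\rangle$ attains its minimum over $T$ uniquely at $\bv$ — every $\bx \in T$ satisfies $\langle \ba, \bx\rangle = \alpha - \lambda(\alpha - \beta)$, where $\lambda \in [0,1]$ is the barycentric weight of $\bx$ at $\bv$, and $\lambda = 1$ only for $\bx = \bv$ — so $\beta < \alpha$ and $\langle \ba, \bw\rangle > \beta$ for every $\bw \in (T \cap \Z^2)\setminus\{\bv\}$. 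Throughout I read $\bw_1, \bw_2$ as minimal lattice generators at $\bv$, i.e.\ with $\bu_i := \bw_i - \bv$ primitive (this is the natural reading and the one needed in the application below, since it is the primitive direction of $L_i$, not $\bw_i$ itself, that controls the count). Set $c_i := \langle \ba, \bu_i\rangle = \langle \ba, \bw_i\rangle - \beta$, which is positive by the previous remark.

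Next I would identify the segment $L_i \cap T$. Since $L_i$ passes through the vertex $\bv$ and through $\bw_i \in T\setminus\{\bv\}$, the set $L_i \cap T$ is a segment $[\bv, \bp_i]$ with $\bv$ an endpoint (being an extreme point of $T$) and $\bp_i \in \partial T$. The boundary of $T$ is $e$ together with the two edges incident to $\bv$; hence either $\bp_i$ lies on $e$, or $L_i = \aff(f)$ for one of the edges $f$ at $\bv$ and then $\bp_i$ is the endpoint of $f$ other than $\bv$, which is itself an endpoint of $e$. In all cases $\bp_i \in \aff(e)$, so $\langle \ba, \bp_i\rangle = \alpha$; writing $\bp_i = \bv + t_i\bu_i$ with $t_i > 0$ and pairing with $\ba$ gives $t_i = (\alpha - \beta)/c_i$, and since $\bu_i$ is primitive, $\nvol(L_i \cap T) = t_i = (\alpha-\beta)/c_i$. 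The lattice point $\bv$ lies on $L_i$ and on $\partial T$, so the upper bound of \Cref{lem:nvol} is attained:
\[
|L_i \cap T \cap \Z^2| = \left\lfloor \nvol(L_i \cap T) \right\rfloor + 1 = \left\lfloor \frac{\alpha - \beta}{c_i} \right\rfloor + 1 .
\]
The hypothesis $\langle \ba, \bw_1\rangle \leq \langle \ba, \bw_2\rangle$ is exactly $c_1 \leq c_2$ with $c_1, c_2 > 0$; since $\alpha - \beta > 0$, the map $c \mapsto (\alpha - \beta)/c$ is decreasing on $(0,\infty)$, so $(\alpha - \beta)/c_1 \geq (\alpha - \beta)/c_2$, and applying $\lfloor\cdot\rfloor + 1$ preserves the inequality, yielding $|L_1 \cap T \cap \Z^2| \geq |L_2 \cap T \cap \Z^2|$.

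The main obstacle is the bookkeeping in the middle step, not any deep idea. One must be careful that the count is governed by $\langle \ba, \cdot\rangle$ evaluated at the \emph{primitive} generator of the ray from $\bv$ through $\bw_i$ — for a non-primitive $\bw_i = \bv + m_i\bu_i$ the value $\langle \ba, \bw_i\rangle - \beta = m_i c_i$ overstates the slope of $L_i$ by the factor $m_i$, so the comparison only goes through once $\bw_i$ is taken primitive — and one must handle the small case distinction for where $L_i$ leaves $T$ so that $\langle \ba, \bp_i\rangle = \alpha$ holds in every case. Once the formula $|L_i \cap T \cap \Z^2| = \lfloor (\alpha - \beta)/c_i\rfloor + 1$ is established the comparison is immediate, and this same formula is what the algorithm exploits: finding a local lattice diameter line of $T_e$ at $\bv$ amounts to minimizing $c = \langle \ba, \cdot \rangle$ over primitive lattice directions pointing from $\bv$ into $T$.
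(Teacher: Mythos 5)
Your proof is correct and follows essentially the same route as the paper's: both arguments reduce the claim to comparing $\nvol(L_1 \cap T)$ with $\nvol(L_2 \cap T)$ and then invoke \Cref{lem:nvol}, the only difference being that the paper runs the proportionality step through Euclidean lengths and an auxiliary point $\bw' \in [\bv,\bw_2]$ at the height of $\bw_1$, while you run it directly through the linear functional to get the closed form $\lfloor (\alpha-\beta)/c_i \rfloor + 1$; these are the same computation.

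The one substantive point is your primitivity caveat, and you are right to insist on it: it is not merely ``the natural reading'' but is necessary, because the lemma is false as literally stated. Take $T = \conv\{(0,0),(-2,2),(2,2)\}$ with $e$ the top edge, $\ba = (0,1)$, $\bv = (0,0)$, and $\bw_1 = (1,2)$, $\bw_2 = (0,2)$. Then $\langle \ba, \bw_1\rangle = \langle \ba, \bw_2\rangle = 2$, but $L_1 \cap T = [(0,0),(1,2)]$ contains $2$ lattice points while $L_2 \cap T = [(0,0),(0,2)]$ contains $3$. The paper's own proof carries the same implicit assumption: it applies \Cref{lem:nvol} in the form $|L_i \cap T \cap \Z^2| = \lfloor \vol(L_i\cap T)/\|\bw_i - \bv\| \rfloor + 1$, which is valid only when $\bw_i - \bv$ is primitive, since otherwise the denominator must be the norm of the primitive direction vector rather than of $\bw_i - \bv$. (In the algorithm this is harmless for the first minimizer of $\langle \ba,\cdot\rangle$ over $T \cap \Z^2 \setminus \{\bv\}$, which is automatically primitive relative to $\bv$, but the hypothesis that each $\bw_i - \bv$ is primitive should be added to the lemma statement.) So your version is the correct statement, and your proof of it is complete.
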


\begin{proof}
    If $\langle \ba, \bw_1 \rangle = \langle \ba, \bw_2 \rangle$, then $\bw_1$ and $\bw_2$ lie on the same hyperplane $H(\ba, \langle \ba, \bw_1 \rangle)$, and by  proportionality of lines 
\[
\frac{\operatorname{vol}(L_1 \cap T)}{\|\bw_1- \bv\|} = \frac{\operatorname{vol}(L_2 \cap T)}{\|\bw_2- \bv\|}.
\]
Applying \Cref{lem:nvol}, we obtain
\[
|L_1 \cap T \cap \mathbb{Z}^2| = \left\lfloor \frac{\operatorname{vol}(L_1 \cap T)}{\|\bw_1- \bv\|} \right\rfloor + 1= \left\lfloor \frac{\operatorname{vol}(L_2 \cap T)}{\|\bw_2- \bv\|} \right\rfloor + 1 = |L_2 \cap T \cap \mathbb{Z}^2|.
\]
On the other hand, if $\langle \ba, \bw_1 \rangle < \langle \ba, \bw_2 \rangle$, then there exists a $\bw' \in [\bv, \bw_2]$ such that $\langle \ba, \bw' \rangle = \langle \ba, \bw_1 \rangle$. Again, by proportionality of lines
\[
\frac{\operatorname{vol}(L_1 \cap T)}{\|\bw_1- \bv\|} = \frac{\operatorname{vol}(L_1 \cap T)}{\|\bw'- \bv\|} \geq \frac{\operatorname{vol}(L_2 \cap T)}{\|\bw_2- \bv\|}.
\]
Applying \Cref{lem:nvol} it follows that $|L_1 \cap T \cap \mathbb{Z}^2| \geq |L_2 \cap T \cap \mathbb{Z}^2|$.
\end{proof}


Using the lemmas above, we prove that the algorithm \textsc{Computing Lattice Diameters} is correct and runs in polynomial time in $n$ and $\log_2(c)$, where $n$ is the number of vertices of $P$ and $c$ is the largest absolute value of a coordinate of a vertex.

\begin{theorem}\label{thm:LD-poly}
     Let $P$ be a lattice polygon. There exists a polynomial-time algorithm that computes a lattice diameter line of $P$ for every direction in which a lattice diameter line exists. In particular, the lattice diameter of $P$ can be computed in polynomial time.
\end{theorem}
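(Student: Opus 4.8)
The plan is to establish correctness of the algorithm \textsc{Computing Lattice Diameter Lines} and then bound its running time, combining the three lemmas already proved. For correctness, I would argue that every lattice diameter line is captured. Take any diameter direction $\bu$. By \Cref{lemma:vertex_edge_pair}, there is a $\bu$-lattice diameter line $L$ of $P$ passing through a vertex $\bv$ and an edge $e$ with $\bv$ an opposite vertex to $e$. Since $L$ contains a lattice point on the boundary of $P$ (namely $\bv$, and also a boundary point on $\aff(e)$), \Cref{lem:nvol} tells us $|L\cap P \cap \Z^2| = \lfloor \nvol(L\cap P)\rfloor + 1$, so $L$ genuinely realizes the maximum number of lattice points among $\bu$-lines. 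The line $L$ restricted to the triangle $T_e = \conv(\{e,\bv\})$ is a line through $\bv$; I claim it is a \emph{local} lattice diameter line of $T_e$ at $\bv$, and conversely that the local lattice diameter lines of $T_e$ at $\bv$ that the algorithm computes include one with direction $\bu$. This is exactly where \Cref{lemma:min-gens} enters: it shows that among lattice points $\bw \in T_e\cap\Z^2\setminus\{\bv\}$, the value $|\aff(\{\bv,\bw\})\cap T_e \cap \Z^2|$ is monotone in $\langle \ba,\bw\rangle$, so the optimum is achieved by joining $\bv$ to a lattice point minimizing $\langle \ba, \cdot\rangle$ over $T_e \cap \Z^2$ — of which there are at most a controlled number (the lattice points of $T_e$ on, or nearest to, the edge $e$, lying on at most the two edges of $T_e$ through $\bv$ and possibly one more; in any case at most three distinct directions, consistent with Alarcon's bound). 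The output step returns lines maximizing $|L\cap P \cap \Z^2|$ over all candidates, which therefore includes a genuine lattice diameter line in direction $\bu$.

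For the running time, I would observe the following. The polygon has $n$ vertices with coordinates bounded by $c$ in absolute value, so the input size is $\Theta(n\log c)$. For each edge $e$ with outward normal $\ba$, computing the opposite vertices amounts to minimizing the linear functional $\langle \ba,\cdot\rangle$ over the $n$ vertices of $P$ — this takes $O(n)$ arithmetic operations on integers of bit-size $O(\log c)$, and there are at most two minimizers (two opposite vertices) since the minimum face of a polygon in a generic-or-edge direction is a vertex or an edge. So $|\Tcal| \le 2n$. For each triangle $T_e = \conv(\{e,\bv\})$, I must find, among $\bw \in T_e\cap\Z^2\setminus\{\bv\}$, a line $\aff(\{\bv,\bw\})$ maximizing $|\aff(\{\bv,\bw\})\cap T_e\cap\Z^2|$; by \Cref{lemma:min-gens} it suffices to examine lattice points $\bw$ with $\langle\ba,\bw\rangle$ minimal, and these are exactly the lattice points of $T_e$ on the edge $e$ together with (possibly) the two lattice points adjacent to $\bv$ along the two edges $[\bv, e^-], [\bv,e^+]$ of $T_e$ that are closest to $e$ — a bounded set of candidate \emph{directions} once we reduce each $\bw-\bv$ to a primitive vector, and at most three of them. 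Computing a single candidate direction and evaluating $|\aff(\{\bv,\bw\})\cap T_e \cap \Z^2|$ via \Cref{lem:nvol} requires only computing $\nvol$, i.e. a ratio of volumes expressible through gcd's and integer arithmetic on numbers of bit-size $O(\log c)$, which is polynomial. Hence the whole loop costs $O(n)$ such evaluations, i.e. polynomial in $n$ and $\log c$. The final comparison over $|\Lcal| = O(n)$ candidate lines, each evaluated against $P$ by \Cref{lem:nvol}, is likewise polynomial.

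The main obstacle I anticipate is the step asserting that the local lattice diameter lines of $T_e$ at $\bv$ can be \emph{enumerated} in polynomial time — a priori $T_e$ can contain exponentially many lattice points, so one cannot list them all. The resolution is precisely \Cref{lemma:min-gens}: it collapses the search to lattice points minimizing a linear functional over $T_e$, which is a one-dimensional sub-problem (finding lattice points on a bounded segment, or the first lattice point encountered when sliding a line in direction $\bu^\perp$ toward $e$), solvable by elementary number theory — essentially a continued-fraction / extended-Euclid computation, which is polynomial in the bit-size. I would also need the small auxiliary fact, implicit in the bullet list, that there are at most three such local lattice diameter lines (hence at most three candidate directions per triangle), which follows from Alarcon's \cite[Lemma 2.4]{Alarcon_PhD} or from \Cref{Lemma:MaxDegree}; without it the enumeration count could in principle not be constant, though it would still be polynomially bounded. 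Finally, I should double-check the boundary/degenerate cases — when $\bv$ lies on $\aff(e)$ (degenerate triangle), when $P$ is itself a segment, and when an edge has two opposite vertices — but these are handled uniformly by the volume formula in \Cref{lem:nvol} since in each case the relevant line meets $\partial P$ in a lattice point.
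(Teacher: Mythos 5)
Your proposal follows essentially the same route as the paper's proof: correctness via \Cref{lemma:vertex_edge_pair} (every diameter direction is realized by a line through an edge/opposite-vertex pair) together with Alarcon's bound of three diameter lines through a fixed vertex, and polynomial running time by using \Cref{lemma:min-gens} to collapse the local search in each triangle to minimizing the linear functional $\langle \ba, \cdot \rangle$ over its lattice points, a two-dimensional integer-programming subproblem (the paper invokes the Eisenbrand--Laue algorithm where you propose an equivalent extended-Euclid computation). One slip of phrasing: since $\ba$ is the outward normal of $e$ and $\bv$ is its opposite vertex, the minimizers of $\langle \ba, \cdot \rangle$ over $T_e \cap \Z^2 \setminus \{\bv\}$ are the lattice points \emph{closest to} $\bv$, not those ``on, or nearest to, the edge $e$'' as your parenthetical asserts --- your later description of sliding a line from $\bv$ toward $e$ and taking the first lattice point encountered is the correct one, so this does not affect the argument.
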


\begin{proof}
    \emph{Correctness of the algorithm:} Let $\bu$ be a diameter direction of $P$. By \Cref{lemma:vertex_edge_pair}, there exists a lattice diameter line $L$ in direction $\bu$, that passes through an edge $e$ and an opposite vertex $\bv$ to that edge. Thus $L$ is a local lattice diameter line of $T=\conv(\{e,\bv\})$ at $\bv$. Let $m = \min \{3, |T \cap \Z^2 \setminus \{\bv\}|\}$, and let $L_i$ for $i \in [m]$ be the lattice lines computed in the second \textit{for loop}. By Alarcon \cite[Lemma~2.4]{Alarcon_PhD} at most three lattice diameter lines pass through $\bv$, thus $L$ must be contained in $\{L_i \st i \in [m]\}$.

    \emph{Complexity of the algorithm:} We can assume that the input data is a list of vertices, edges, and incidence relations between the vertices and edges, since in dimension two this data can be computed from the vertices of $P$ in polynomial time. Let $n$ denote the number of vertices of $P$.

    \begin{enumerate}
        \item \emph{first for loop:} This loop is repeated $n$ times, since $P$ has $n$ edges. Finding an opposite vertex $\bv$ to an edge $e$ is linear optimization, which can be done in polynomial time in the input \cite{LP_Poly}. Further, checking if $\bv$ lies on an edge $[\bv,\bv']$ that is parallel to $e$, in which case $e$ has two opposite vertices, can also be done in $O(1)$. Each edge contributes at most two triangles, so throughout the whole algorithm $|\Tcal| \leq 2n$.
        \item \emph{second for loop:} This for loop is repeated $|\mathcal{T}|$ times. For one iteration:
        Computing $|T \cap \Z^2|$ can be computed in polynomial time, for example, by Pick's Theorem \cite{pick99}. Let $\ba$ be an outward normal vector of $T$ with $\min_{\bx \in T} \langle \ba, \bx \rangle = \langle \ba,\bv \rangle$.
        A lattice point $\bw_1 \in T \cap \mathbb{Z}^2 \setminus \{\bv\}$ minimizes the functional $\langle \ba, \cdot \rangle$ if and only if it minimizes this functional over the lattice points of the rational polygon
        \[
        T' := T \cap H^-(\ba, \langle \ba, \bv \rangle + 1).
        \]
        The other $\bw_i$ for $i \in [m]$ can be computed similarly, with an additional check if $\bw_i \in T \cap H(\ba, \langle \ba, \bw_{i-1} \rangle)$. Since integer linear programming can be solved in polynomial time in dimension two~\cite{ILP-2d-poly}, the minimizers $\bw_i$ can be computed in polynomial time in the input size of $P$. 
        \item The number of lattice points on a segment $L \cap T$, where $L = \aff(\{\bv,\bw\})$, is given by
        \[
        |L \cap T \cap \mathbb{Z}^2| = \left\lfloor \frac{\operatorname{vol}(L \cap T)}{\|\bw - \bv\|} \right\rfloor + 1.
        \]
        $|\mathcal{L}|$ is bounded above by $3\cdot (2n)$
        and finding maximizers of $|L \cap T \cap \Z^2|$ is linear in $|\mathcal{L}|$.
    \end{enumerate}\vspace{-1.5em}
\end{proof}

\begin{remark}
   \Cref{lemma:min-gens} extends to simplices in any dimension. Let $S = \conv(\{F, \bv\})$ be a simplex where $\bv$ is an opposite vertex to a facet $F$. The argument from \mbox{\Cref{thm:LD-poly}} and the fact that integer linear programming can be solved in polynomial time in fixed dimension~\cite{ILP_poly} yield a polynomial-time method to compute the local lattice diameter \mbox{of $S$ at $\bv$}.
\end{remark}


\subsection{Hardness of computing lattice diameters in dimensions three and higher}\label{sec:NP-hard}

A key ingredient in our algorithm to compute lattice diameters of lattice polygons is the existence of lattice diameter lines that pass through vertices. This fails in higher dimensions, as the following example shows.

\begin{example}\label{ex:LD-3d-interior}
For $\bv_1 = (-1,0), \, \bv_2 = (0,-1), \, \bv_3 = (1,1)$, let
\[
P=\conv (\{ (-m,\bv_1), (-m,\bv_2), (-m+1,\bv_3),(m,-\bv_1),(m,-\bv_2),(m-1,-\bv_3)\}),
\]
where $m \geq 2$. The unique lattice diameter segment of $P$ is the segment on the first-coordinate axis, from values $-(m-1)$ to $m-1$. It does not pass through a vertex of $P$, not even through a lattice point on the boundary of $P$. \Cref{fig:3d-novertex} depicts this polytope for $m=3$ with its lattice diameter segment in orange.

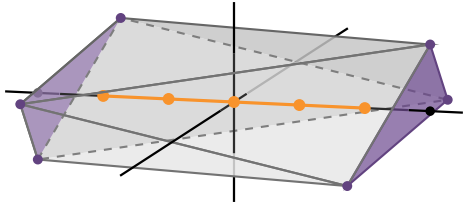
\begin{figure}[ht!]
    \centering
    \tdplotsetmaincoords{80}{15} 
\begin{tikzpicture}[tdplot_main_coords, scale=0.9]

\coordinate (A) at (-3,0,-1);  
\coordinate (B) at (-3,-1,0);  
\coordinate (C) at (-2,1,1);   
\coordinate (D) at (3,0,1);    
\coordinate (E) at (3,1,0);    
\coordinate (F) at (2,-1,-1);  


\draw[line width=0.3mm, black] (-2.67,0,0)--(-3.5,0,0) {}; 
\fill[black!60] (-3,0,0) circle (2pt);
\draw[line width=0.3mm, black] (0,0,-10/13)--(0,0,-1.5) {};

\draw[line width=0.3mm, black] (0,10/13,0)--(0,6.5,0) {};

\fill[lightgray, opacity=0.8] (C)--(D)--(E)--cycle;
\fill[lightgray!70, opacity=0.8] (A)--(C)--(E)--cycle;

\fill[lightgray!30, opacity=0.8] (A)--(E)--(F)--cycle;
\fill[darklav, opacity=0.7] (A)--(B)--(C)--cycle;

\draw[gray!80!black, thick, dashed] (A)--(C)--(E) -- cycle;
\draw[gray!80!black, thick] (A)--(B)--(C);

\draw[line width=0.3mm, black] (0,0,-10/13)--(0,0,8/11) {}; 
\draw[line width=0.3mm, black] (0,-10/13,0)--(0,8/11,0) {};

\draw[line width=0.3mm, black] (-2.66,0,0) -- (-2,0,0);
\draw[line width=0.3mm, black] (2,0,0) -- (2.66,0,0);

\draw[very thick, orange, minimum width=4pt] (-2,0,0) -- (2,0,0);
\foreach \xx in {-2,...,2} {
  \fill[orange] (\xx,0,0) circle (2.5pt);
}

\fill[darklav, opacity=0.7] (D)--(E)--(F)--cycle;
\draw[darklav!85!black, thick] (D)--(E)--(F)--cycle;

\fill[lightgray!50, opacity=0.2] (B)--(C)--(D)--cycle;
\draw[gray!80!black, thick](B)--(C)--(D)--cycle;

\draw[line width=0.3mm, black] (0,0,8/11)--(0,0,1.5) {};

\draw[line width=0.3mm, black] (2.67,0,0)--(3.5,0,0) {}; 
\fill[black] (3,0,0) circle (2pt);

\fill[lightgray!50, opacity=0.2] (B)--(D)--(F)--cycle;
\draw[gray!90!black, thick](B)--(D)--(F)--cycle;

\fill[lightgray!50, opacity=0.2] (A)--(B)--(F)--cycle;
\draw[gray!90!black, thick](A)--(B)--(F)--cycle;

\draw[line width=0.3mm, black] (0,-10/13,0)--(0,-6.5,0) {};

\foreach \p in {(A),(B),(C),(D), (E), (F)} {
  \node at \p [circle, draw=darklav!85!black, fill=darklav!85!black, minimum width=3pt] {};
}

\end{tikzpicture}
\caption{A \textit{unique} lattice diameter segment contained in the interior of a lattice polytope.}
\label{fig:3d-novertex}
\end{figure}
\end{example}

We show that in a general setting computing lattice diameters becomes $\NP$-hard.

\begin{theorem}\label{thm:NP-hard}
     Let $d \geq 3$ and let $K \subseteq \R^d$ be a $d$-dimensional bounded semi-algebraic set. \mbox{Computing} the lattice diameter of $K$ is $\NP$-hard.
\end{theorem}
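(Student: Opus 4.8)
The plan is to reduce from a classical $\NP$-complete arithmetic problem. The only structural fact I need is that for $\bx,\by\in\Z^d$ the lattice segment $[\bx,\by]$ contains exactly $\gcd(\by-\bx)+1$ lattice points, where $\gcd(\bv)$ denotes the greatest common divisor of the coordinates of the integer vector $\bv$ (with $\gcd$ of the zero vector taken to be $0$); hence $\ldiam(K)=\max\{\gcd(\by-\bx):\bx,\by\in K\cap\Z^d\}$, so computing $\ldiam(K)$ is the problem of exhibiting two lattice points of $K$ whose coordinatewise difference is divisible by the largest possible integer. I would reduce from the problem, proved $\NP$-complete by Manders and Adleman, of deciding whether a binary quadratic Diophantine equation $\alpha x^2+\beta y=\gamma$ (with $\alpha,\beta,\gamma\in\N$ given in binary, total bit-length $n$) has a solution with $x,y\in\Z_{\ge 0}$. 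The cases $\alpha=0$ and $\beta=0$ are decidable in polynomial time — the first is a divisibility check, the second an integer-square-root check — so I may assume $\alpha,\beta\ge 1$, and then every solution satisfies $0\le x\le\sqrt{\gamma/\alpha}$ and $0\le y\le\gamma/\beta$.

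Given such an instance, the gadget I would build is a full-dimensional neighbourhood of the planar parabola arc
\[
\Gamma_0:=\Bigl\{\Bigl(t,\ \tfrac{\gamma-\alpha t^2}{\beta},\ 0\Bigr)\in\R^3\ :\ 0\le t,\ \alpha t^2\le\gamma\Bigr\}
\]
together with the analogous neighbourhood of its translate $\Gamma_1:=\Gamma_0+(0,N,0)$ (for a parameter $N\ge 1$), plus a ball of radius $\tfrac14$ around the fixed lattice point $\bp_\ast:=(-1,0,0)$; here $\varepsilon:=2^{-(n+3)}$, and $K$ is the union of these three pieces. Then $K$ is bounded, $3$-dimensional, and semi-algebraic with a description of size polynomial in $n$: the $\varepsilon$-neighbourhood of $\Gamma_i$ is the set of $(p_1,p_2,p_3)$ for which some $t$ satisfies $0\le t$, $\alpha t^2\le\gamma$, and $\beta^2\bigl[(p_1-t)^2+p_3^2\bigr]+\bigl(\beta(p_2-iN)-\gamma+\alpha t^2\bigr)^2\le\beta^2\varepsilon^2$, a single quantifier over a variable of degree at most $4$, whose elimination leaves a quantifier-free formula of polynomial size. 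For $d>3$ I would take $K\times[-\varepsilon,\varepsilon]^{d-3}\subset\R^d$: this is $d$-dimensional, bounded, semi-algebraic, and its lattice points (hence its lattice diameter) are those of $K$ with $d-3$ zeros appended.

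The crux is to verify that the thickening creates no unwanted lattice points, i.e.\ that the lattice points in the $\varepsilon$-neighbourhood of $\Gamma_i$ are precisely those lying on $\Gamma_i$ itself. I would prove this using that $\Gamma_i$ is a graph over its first coordinate with slope bounded by $2\sqrt{\alpha\gamma}/\beta\le 2^n$: if $\bp=(p_1,p_2,p_3)\in\Z^3$ is within $\varepsilon$ of $\Gamma_i$ then there is $t$ with $0\le t$, $\alpha t^2\le\gamma$, $|p_1-t|\le\varepsilon$, $|p_3|\le\varepsilon$, and $|p_2-iN-(\gamma-\alpha t^2)/\beta|\le\varepsilon$. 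The bound on $p_3$ forces $p_3=0$; the bound on $p_1-t$ together with $\alpha t^2\le\gamma$ and integrality of $\alpha p_1^2$ forces $p_1\in\Z_{\ge0}$ with $\alpha p_1^2\le\gamma$ and $|t-p_1|\le\varepsilon$; and then $|\alpha t^2-\alpha p_1^2|=\alpha|t-p_1|(t+p_1)\le 2\varepsilon\sqrt{\alpha\gamma}$ gives $|\beta(p_2-iN)-(\gamma-\alpha p_1^2)|\le\varepsilon(\beta+2\sqrt{\alpha\gamma})<1$, so this integer vanishes: $\alpha p_1^2+\beta(p_2-iN)=\gamma$ with $p_1,p_2-iN\ge 0$, meaning $\bp\in\Gamma_i$ and encodes the solution $(p_1,p_2-iN)$. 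Consequently
\[
K\cap\Z^3=\{\bp_\ast\}\cup\bigl\{(x,y,0),\,(x,y+N,0)\ :\ x,y\in\Z_{\ge0},\ \alpha x^2+\beta y=\gamma\bigr\},
\]
the ball contributing only $\bp_\ast$ since it lies in $\{x_1\le-\tfrac12\}$ while the two neighbourhoods lie in $\{x_1\ge-\varepsilon\}$. Hence, if the equation is unsolvable then $K\cap\Z^3=\{\bp_\ast\}$ and $\ldiam(K)=0$, whereas if $(x,y)$ is a solution then $(x,y,0)$ and $(x,y+N,0)$ both lie in $K\cap\Z^3$ and differ by $(0,N,0)$, so $\ldiam(K)\ge N$. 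Thus the equation is solvable if and only if $\ldiam(K)\ge N$ (equivalently, iff $\ldiam(K)\neq 0$), so any polynomial-time algorithm computing lattice diameters of bounded semi-algebraic sets would solve the Manders--Adleman problem in polynomial time, proving $\NP$-hardness for every $d\ge 3$.

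I expect the main obstacle to be exactly this last claim that no spurious lattice points appear: the theorem insists on a genuinely $d$-dimensional $K$, and a careless thickening of a curve would absorb exponentially many lattice points, so the argument must rely on the curves being graphs over one coordinate with slope bounded by a single exponential and thicken by an $\varepsilon$ that, while exponentially small, still has polynomial bit-length. The rest is routine bookkeeping: discharging the degenerate cases $\alpha=0,\beta=0$, checking the displayed estimate, and confirming that eliminating the single bounded-degree existential quantifier keeps the description polynomial-size. (Incidentally, already $N=1$ yields $\NP$-hardness, which merely reflects that deciding whether a bounded semi-algebraic set contains two lattice points is $\NP$-hard; taking $N$ large shows the stronger statement that comparing the lattice diameter against a prescribed large bound is hard.)
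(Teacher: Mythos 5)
Your reduction is sound and starts from essentially the same source problem as the paper (the Manders--Adleman quadratic Diophantine equation, AN1 in Garey--Johnson), but the gadget is genuinely different. The paper passes through the De Loera--Hemmecke--K\"oppe--Weismantel formulation (minimizing $f(x,y)=(x^2-a-by)^2$ over a lattice rectangle $R$) and builds the \emph{fat} solid $K=\{(x,y,z)\st (x,y)\in R,\ f(x,y)\le z\le Z\}$; the work there goes into choosing $Z$ so large that any non-vertical segment, having short projections onto the coordinate axes, is beaten by a vertical fiber, which forces the diameter direction to be $(0,0,1)$ and makes the lattice diameter reveal $\min_{R\cap\Z^2}f$. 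You instead build a \emph{thin} full-dimensional tube around the solution parabola so that the only lattice points of $K$ are the encoded solutions plus one dummy point, and read off solvability from $\ldiam(K)=0$ versus $\ldiam(K)\ge N$. Your version buys a gap, hence inapproximability within any factor; the paper's version comes with a quantifier-free description for free and yields the remark that computing the diameter even in a \emph{fixed, known} direction is hard. Two details of yours deserve tightening: (i) AN1 as usually stated asks for \emph{positive} $x,y$ while your tube encodes nonnegative solutions, so you should either restrict the parameter range to $t\ge 1$ and $\alpha t^2\le\gamma-\beta$ or justify that the nonnegative variant is also $\NP$-hard; (ii) the claim that eliminating the single degree-four existential quantifier yields a polynomial-size quantifier-free formula is true (fixed number of variables and fixed degree), but it needs a citation, and you could sidestep it entirely by replacing the tube with the quantifier-free slab $\{(p_1,p_2,p_3)\st p_1\ge 0,\ \alpha p_1^2\le\gamma,\ |p_3|\le\varepsilon,\ |\beta(p_2-iN)-\gamma+\alpha p_1^2|\le\tfrac12\}$, whose integer points are exactly the encoded solutions by the same ``an integer of absolute value less than one is zero'' argument you already use.
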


\begin{proof}
For integers $a,b,c>0$ let $f(x,y):=(x^2-a-by)^2$ and $R(a,b,c):=\{ (x,y) \st 1 \leq x \leq c-1, \frac{1-a}{b} \leq y \leq \frac{(c-1)^2-a}{b}\}$.
It was shown in~\cite[Lemma 2.1]{Lemma2.1} that deciding whether the minimum of $f$ over $R \cap \Z^2$ is zero, is $\NP$-hard. This is a polynomial-time reduction from the $\NP$-complete problem AN1 in \cite[p.~249]{GJ}. We may assume that $c>\max\{2,b\}$ (which can be checked in constant time), otherwise $R \cap \Z^2 = \emptyset$. Under these assumptions $R$ is also two-dimensional.



We construct a three-dimensional semi-algebraic set $K$ with description length polynomial in $a,b,c$, such that determining the lattice diameter of $K$ corresponds to determining the minimum value of $f$ on $R \cap\Z^2$.

\textit{Construction of $K$:} Let $\bp=(1,\lceil \frac{1-a}{b} \rceil ) \in R \cap \Z^2$, $Z:=f(\bp) + \max\{  \lfloor \frac{c^2-2c}{b}\rfloor +1,c-1\}$, and define $ K:= \{ (x,y,z) \st (x,y) \in R, \ f(x,y) \leq z \leq Z \}.$ $K$ is bounded above by $R \times \{Z\}$ and below by the graph of $f$. \Cref{fig:NP-hard-graph} is a sketch of this. 

    \begin{figure}[h!]
\centering
\begin{tikzpicture}[x=0.75pt, y=0.75pt, scale=1]


\foreach \i in {-2,...,13} {           
  \foreach \j in {-2,...,9} {         
    \node[circle, fill=gray!70, inner sep=0pt, minimum size=1.5pt] at (20*\i, 20*\j) {};
  }
}

\coordinate (O) at (0,0); 

\draw[line width=0.8pt] 
  (O) -- ++(75,50);   
\draw[-{Latex[round]},line width=0.8pt] 
  (O) -- ++(-30,-20);

\draw[-{Latex[round]}, line width=0.8pt] 
  (-35,0) -- ++ (300,0);     

\draw[-{Latex[round]}, line width=0.8pt] 
  (0,-25) -- ++(0,170);    

\begin{scope}[yshift=255, xshift=-163]
\draw[line width=0.75, dashed] 
  (326.86,-194.09) .. controls (328.18,-203.74) and (329.37,-250.91) ..
  (327.53,-264.4) .. controls (325.68,-277.89) and (323.33,-285.66) ..
  (320.16,-287.83);

\draw (308.11,-214.53) .. controls (306.44,-229.86) and (311.56,-275.26) .. (319.56,-287.83) ;

\draw[draw=black, fill=lightgray, opacity=0.5] 
  (293.44,-207.33) .. controls (294.17,-208.35) and (302.3,-217.08) ..
  (317.82,-213.33) .. controls (333.33,-209.58) and (342.93,-206.33) ..
  (350.93,-205.4) .. controls (358.93,-204.46) and (363.6,-203.13) ..
  (370.4,-206.33) .. controls (377.2,-209.53) and (379.73,-198.86) ..
  (379.6,-199.26) .. controls (379.47,-199.66) and (378.82,-240.46) ..
  (377.02,-256.26) .. controls (375.22,-272.06) and (368.17,-306.66) ..
  (357.83,-308.32) .. controls (347.5,-309.99) and (347.33,-282.13) ..
  (338,-281.63) .. controls (328.67,-281.13) and (322.5,-291.29) ..
  (317.17,-287.29) .. controls (311.83,-283.29) and (310.83,-274.29) ..
  (308.33,-268.29) .. controls (305.83,-262.29) and (292.71,-206.32) ..
  (293.44,-207.33) -- cycle;

  \end{scope}
  
\begin{scope}[yshift=-40, xshift=2]

\fill[darklav, opacity=0.25]
   (30,30) -- (150,30) -- (210,70) -- (90,70) -- cycle;

\fill[darklav, opacity=0.25]
   (30,30+140) -- (150,30+140) -- 
   (210,70+140) -- (90,70+140) -- cycle;

\end{scope}

\draw[color=orange, thick] (140,40) -- (140,140);

\foreach \yy in {40,60,80,100,120, 140} {
  \node[circle, fill=orange, inner sep=0pt, minimum size=3pt] at (140,\yy) {};
}

\begin{scope}[yshift=255, xshift=-163]
\draw[black, fill=darklav, fill opacity=0.5] 
  (305.4,-196.07) .. controls (325,-189.72) and (352.23,-198.26) ..
  (368.23,-188.76) .. controls (384.23,-179.26) and (382.82,-212.64) ..
  (369.42,-205.84) .. controls (356.02,-199.04) and (319.71,-217.32) ..
  (306,-213.6) .. controls (292.29,-209.89) and (285.8,-202.41) ..
  (305.4,-196.07) -- cycle ;
\end{scope}

\node[darklav] at (236,143) {$R \times \{Z\}$};
\node[darklav] at (220,10) {$R$};
\node at (180,90) {$K$};

\end{tikzpicture}
\caption{The construction given in the proof of \Cref{thm:NP-hard}.}
\label{fig:NP-hard-graph}
\end{figure}
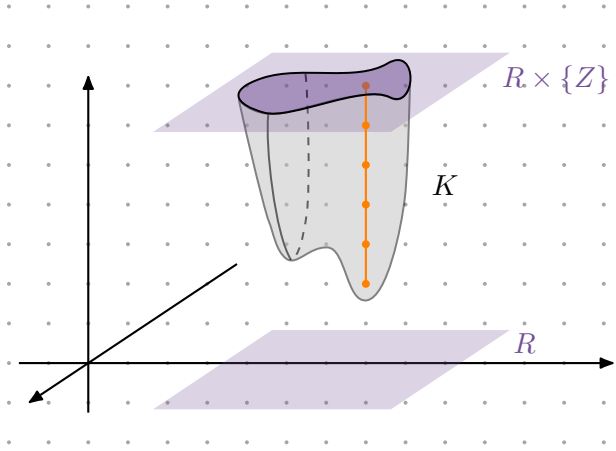
     
The choice of $Z$ ensures that a small region around the segment $S_\bp:=[(\bp,f(\bp)),(\bp,Z)]$ is contained in the interior of $K$, thus $\dim K =3$.
Furthermore, $S_\bp$ contains at least $\max\{ \lfloor\frac{c^2-2c}{b}\rfloor +1,c-1\} +1$ lattice points. Any segment not in direction $(0,0,1)$ projects to a segment in the $x$-axis, or $y$-axis, but the number of lattice points in the projection of $K$ to these axes is bounded by $c-1$, and $\lfloor \frac{c^2-2c}{b}\rfloor +1$, respectively. Thus a lattice diameter line of $K$ must have direction $(0,0,1)$. Then, for a lattice diameter line $L$ of $K$
\begin{align*}
         |L \cap K \cap \Z^3| = Z-\min_{(x,y) \in R \cap \Z^2}f(x,y) +1.
     \end{align*}
So $|L \cap K \cap \Z^3|=Z-1$ if and only if $\min_{(x,y)\in R \cap \Z^2} f(x,y)=0$, which means that determining the lattice diameter of $K$ is $\NP$-hard.

For $d \geq 3$ consider $K_d:=[0,1]^{d-3} \times K$, which, as a Cartesian product of semi-algebraic sets, is a semi-algebraic set. Analogously, a lattice diameter line of $K_d$ has direction $(0,\ldots,0,1)$, and determining whether the lattice diameter is $Z-1$ is $\NP$-hard, since it determines whether the minimum of $f$ over $R \cap \Z^2$ is zero.

Lastly, note that $K_d$ for $d \geq 3$ is defined by a single quartic polynomial, a constant inequality, which are both polynomial in $a,b,c$, and $2^{d-3}$ linear inequalities in $0$ and $1$ (for $[0,1]^{d-3}$). Hence the description length of $K$ and $K_d$ are polynomial in the encoding length of $a,b$ and $c$.
\end{proof}

\begin{remark}
    The construction of $K$ is such that the lattice diameter is guaranteed to have direction $(0, \ldots, 0,1)$. Thus we proved that computing a lattice diameter of a bounded semi-algebraic set, even in a fixed direction, is $\NP$-hard.
\end{remark}

Furthermore, we conjecture that this computation stays hard, even for lattice polytopes:

\begin{conjecture}
    Let $d \geq 3$ and let $P$ be a lattice $d$-polytope. Computing a lattice diameter of $P$ is an $\NP$-hard problem.
\end{conjecture}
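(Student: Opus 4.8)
The plan is to imitate the reduction behind \Cref{thm:NP-hard}. One starts from the $\NP$-hard problem of deciding whether $\min_{(x,y)\in R\cap\Z^2} f(x,y)=0$, where $f(x,y)=(x^2-a-by)^2$ and $R=R(a,b,c)$, and tries to build a lattice $3$-polytope $P$ (the case $d>3$ then follows by taking $[0,1]^{d-3}\times P$ verbatim) whose lattice diameter equals $Z-\min_{(x,y)\in R\cap\Z^2}f(x,y)+1$. As in the semi-algebraic case, one wants $P$ squeezed between $R\times\{0\}$ and $R\times\{Z\}$ with a ``floor'' over each vertical column $\{(x,y)\}\times\R$ of height $\lceil f(x,y)\rceil$, so that the fiber of $P$ over a lattice point $(x,y)\in R\cap\Z^2$ contains exactly $Z-\lceil f(x,y)\rceil+1$ lattice points, and so that no long lattice segment is available off the direction $(0,0,1)$ because the projections to the $x$- and $y$-axes contain few lattice points (exactly as before). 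If such a $P$ with description length polynomial in the encoding length of $a,b,c$ can be produced, the conjecture follows from the argument of \Cref{thm:NP-hard} with essentially no change. So the whole difficulty is concentrated in building $P$.

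The hard part is exactly there, and a naive transcription provably breaks. The lower face of a lattice polytope over $R$ is the graph of a \emph{convex} piecewise-linear function $g$, and a convex $g$ cannot vanish on the solution set $\{(x,y)\in R:x^2=a+by\}$ while remaining $\ge 1$ on the other lattice points of $R$: as soon as there are two solutions $(x_1,y_1)$ and $(x_2,y_2)$, convexity forces $g\le 0$ on the whole segment $[(x_1,y_1),(x_2,y_2)]\subseteq R$, hence on every lattice point in between. Replacing $f$ by its lower convex envelope $\tilde f$ — which does give a genuine polytope and satisfies $\min_R\tilde f=\min_R f$ — does not help either, since $\tilde f$ can dip below $1$ at integer points where $f\ge 1$, creating false positives. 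More fundamentally, any faithful construction would have to realize the squaring map $x\mapsto x^2$ on $\{1,\dots,c-1\}$ inside a lattice polytope, but the convex hull of $\{(x,x^2)\}$ has $\Theta(c)$ vertices and no line meets three of its lattice points, so with $c$ given in binary this cannot be encoded by a polynomial-size vertex or facet list. A genuinely new reduction idea is therefore needed, not a mechanical translation of \Cref{thm:NP-hard}.

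Two routes seem worth pursuing. First, one can pass to a larger fixed dimension $d$ and try to linearize the quadratic constraint with a geometric gadget: in the plane the number of lattice points on a line already ``counts'' in the style of \Cref{lemma:min-gens}, and one would hope to arrange, in a few extra coordinates, a configuration whose local lattice diameter at a distinguished vertex equals $x^2$ for the value $x$ selected by another coordinate; the open question is whether such a configuration admits a description of polynomial bit-size. Second, one can try to reduce from an $\NP$-hard problem whose witness space is \emph{polynomially} bounded, so that the candidate witnesses are literally the lattice points of a compact polytope, and then encode the combinatorial choice not in a scalar coordinate but in the \emph{direction} of the lattice diameter segment — exploiting, as \Cref{ex:LD-3d-interior} shows, that for $d\ge 3$ lattice diameter segments may lie in the interior and point in directions with no small primitive representative. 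The main obstacle, common to both routes, is reconciling the polynomial-size description of a lattice polytope with the encoding of an arithmetic search whose natural magnitude is exponential — the very tension that the semi-algebraic formulation sidesteps by allowing one nonlinear defining polynomial. We therefore regard the above as a program toward the conjecture rather than a proof.
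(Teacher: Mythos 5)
The statement you are addressing is stated in the paper only as a conjecture; the authors give no proof of it, so there is no paper argument to compare yours against. Your proposal is, by your own admission, not a proof either: it closes by describing itself as ``a program toward the conjecture rather than a proof,'' and neither of the two routes you sketch is carried out. So the gap is the entire argument --- the statement remains unestablished, which matches its status in the paper.

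That said, your negative analysis is sound and worth recording. You correctly observe that the reduction of \Cref{thm:NP-hard} cannot be transcribed mechanically to lattice polytopes: the lower boundary of a polytope over $R$ is the graph of a convex piecewise-linear function, which cannot agree with the non-convex function $f(x,y)=(x^2-a-by)^2$ on the lattice points of $R$ in the way the reduction requires (two zeros of $f$ would force the whole segment between them to lie at height $\le 0$), and passing to the lower convex envelope destroys the correspondence between fiber lengths and values of $f$. Your second obstruction --- that $\conv(\{(x,x^2) : x \in \{1,\dots,c-1\}\})$ has $\Theta(c)$ vertices and hence admits no polynomial-size vertex or facet description when $c$ is given in binary --- pinpoints exactly the tension the authors sidestep by allowing a single quartic defining inequality in the semi-algebraic setting. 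Both observations are correct and explain why the polytopal case was left open; but identifying obstacles to one particular reduction does not establish $\NP$-hardness, and no alternative reduction is supplied.
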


\section{Counting lattice diameter lines}\label{sec:QP}

Ehrhart theory studies the number of lattice points in dilations of a given lattice (or rational) polytope. A fundamental result by Ehrhart states that this counting function agrees with a polynomial function for lattice polytopes, and a quasi-polynomial function for rational polytopes. For details, see \cite{beckrobins}. In this section we show that for lattice polygons $P$ the counting function $\LD_P(k)$, which counts the number of lattice diameter lines (or segments) in the dilate $kP$, agrees with a quasi-polynomial function.

\begin{example}\label{ex:qp}
    Let $P= \conv(\{(0,0), (1,3), (5,1), (6,4)\})$. In \Cref{fig:dilations} we depict the segments $L \cap P$ for lattice diameter lines $L$ of $kP$ for $k=1, \ldots, 6$.

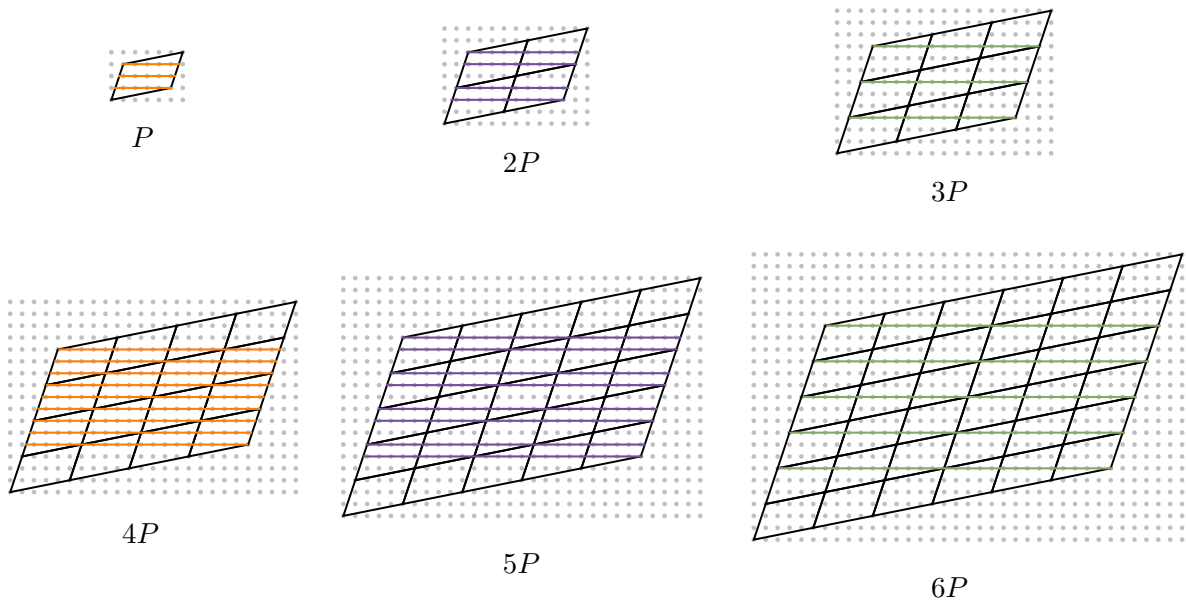
\begin{figure}[ht!]
    \centering
    \tikzset{every picture/.style={line width=0.75pt}} 

\begin{tikzpicture}[x=0.75pt,y=0.75pt,yscale=-.3,xscale=.3]

\tikzset{
  graydot/.style={circle, draw=lightgray, fill=lightgray, minimum width=1pt, inner sep=0pt},
  sone/.style={circle, draw={rgb,255:red,245;lightgreen,166;blue,35}, fill={rgb,255:red,245;lightgreen,166;blue,35}, minimum width=1pt, inner sep=0pt}, 
  stwo/.style={circle, draw={rgb,255:red,144;lightgreen,19;blue,254},  fill={rgb,255:red,144;lightgreen,19;blue,254},  minimum width=1pt, inner sep=0pt}  
}

\pgfmathsetmacro{\XL}{460}
\pgfmathsetmacro{\YL}{180}
\foreach \i in {0,...,6}{
  \foreach \j in {0,...,4}{
    \node[graydot] at ({\XL + 20*\i},{\YL + 20*\j}) {};
  }
}
\foreach \j in {0}{
  \pgfmathsetmacro{\XL}{460+\j*20}
  \pgfmathsetmacro{\YL}{180-\j*60}
  \foreach \i in {0}{
    \draw [color=black, draw opacity=1]
  (\XL+\i*100,\YL+80-\i*20)  -- (\XL+100+\i*100,\YL+60-\i*20) -- (\XL+120+\i*100,\YL-\i*20) --(\XL+20+\i*100,\YL+20-\i*20) -- cycle;
  }
}

\draw[color=orange, line width=.8pt, draw opacity=1] (478, 200) -- (576, 200);
\draw[color=orange, line width=.8pt, draw opacity=1] (471, 220) -- (569, 220);
\draw[color=orange, line width=.8pt, draw opacity=1] (464, 240) -- (562, 240);

\pgfmathsetmacro{\XL}{1020}
\pgfmathsetmacro{\YL}{140}
\foreach \i in {0,...,12}{
  \foreach \j in {0,...,8}{
    \node[graydot] at ({\XL + 20*\i},{\YL + 20*\j}) {};
  }
}

\foreach \j in {0,1}{
  \pgfmathsetmacro{\XL}{1020+\j*20}
  \pgfmathsetmacro{\YL}{220-\j*60}
  \foreach \i in {0,1}{
    \draw [color=black, draw opacity=1]
  (\XL+\i*100,\YL+80-\i*20)  -- (\XL+100+\i*100,\YL+60-\i*20) -- (\XL+120+\i*100,\YL-\i*20) --(\XL+20+\i*100,\YL+20-\i*20) -- cycle;
  }
}

\draw[color=darklav, line width=.8pt, draw opacity=1] (1056, 180) -- (1247.61, 180);
\draw[color=darklav, line width=.8pt, draw opacity=1] (1051, 200) -- (1242.61, 200);
\draw[color=darklav, line width=.8pt, draw opacity=1] (1037, 240) -- (1229.61, 240);
\draw[color=darklav, line width=.8pt, draw opacity=1] (1030, 260) -- (1222.61, 260);

\pgfmathsetmacro{\Xo}{1680}
\pgfmathsetmacro{\Yo}{110}
\foreach \i in {0,...,18}{
  \foreach \j in {0,...,12}{
    \node[graydot] at ({\Xo + 20*\i},{\Yo + 20*\j}) {};
  }
}

\foreach \j in {0,1,2}{
  \pgfmathsetmacro{\XL}{1680+\j*20}
  \pgfmathsetmacro{\YL}{270-\j*60}
  \foreach \i in {0,1,2}{
    \draw [color=black, draw opacity=1]
  (\XL+\i*100,\YL+80-\i*20)  -- (\XL+100+\i*100,\YL+60-\i*20) -- (\XL+120+\i*100,\YL-\i*20) --(\XL+20+\i*100,\YL+20-\i*20) -- cycle;
  }
}

\draw[color=lightgreen, line width=.8pt, draw opacity=1] (1740, 170) -- (2020, 170);
\draw[color=lightgreen, line width=.8pt, draw opacity=1] (1720, 230) -- (2000, 230);
\draw[color=lightgreen, line width=.8pt, draw opacity=1] (1700, 290) -- (1980, 290);

\pgfmathsetmacro{\XL}{290}
\pgfmathsetmacro{\YL}{600}
\foreach \i in {0,...,24}{
  \foreach \j in {0,...,16}{
    \node[graydot] at ({\XL + 20*\i},{\YL + 20*\j}) {};
  }
}

\foreach \j in {0,...,3}{
  \pgfmathsetmacro{\XL}{290+\j*20}
  \pgfmathsetmacro{\YL}{840-\j*60}
  \foreach \i in {0,...,3}{
    \draw [color=black, draw opacity=1]
  (\XL+\i*100,\YL+80-\i*20)  -- (\XL+100+\i*100,\YL+60-\i*20) -- (\XL+120+\i*100,\YL-\i*20) --(\XL+20+\i*100,\YL+20-\i*20) -- cycle;
  }
}

\draw[color=orange, line width=.8pt, draw opacity=1] (371, 680) -- (745.61, 680);
\draw[color=orange, line width=.8pt, draw opacity=1] (364, 700) -- (738.61, 700);
\draw[color=orange, line width=.8pt, draw opacity=1] (357, 720) -- (731.61, 720);
\draw[color=orange, line width=.8pt, draw opacity=1] (350, 740) -- (724.61, 740);
\draw[color=orange, line width=.8pt, draw opacity=1] (343, 760) -- (717.61, 760);
\draw[color=orange, line width=.8pt, draw opacity=1] (336, 780) -- (710.61, 780);
\draw[color=orange, line width=.8pt, draw opacity=1] (329, 800) -- (703.61, 800);
\draw[color=orange, line width=.8pt, draw opacity=1] (322, 820) -- (696.61, 820);
\draw[color=orange, line width=.8pt, draw opacity=1] (315, 840) -- (689.61, 840);

\pgfmathsetmacro{\Xo}{850}
    \pgfmathsetmacro{\Yo}{560}
\foreach \i in {0,...,30}{
  \foreach \j in {0,...,20}{
    \node[graydot] at ({\Xo + 20*\i},{\Yo + 20*\j}) {};
  }
}

\foreach \j in {0,...,4}{
  \pgfmathsetmacro{\XL}{850+\j*20}
  \pgfmathsetmacro{\YL}{880-\j*60}
  \foreach \i in {0,...,4}{
    \draw [color=black, draw opacity=1]
  (\XL+\i*100,\YL+80-\i*20)  -- (\XL+100+\i*100,\YL+60-\i*20) -- (\XL+120+\i*100,\YL-\i*20) --(\XL+20+\i*100,\YL+20-\i*20) -- cycle;
  }
}

\draw[color=darklav, line width=.8pt, draw opacity=1] (948, 660) -- (1417.61, 660);
\draw[color=darklav, line width=.8pt, draw opacity=1] (943, 680) -- (1412.61, 680);
\draw[color=darklav, line width=.8pt, draw opacity=1] (929, 720) -- (1399.61, 720);
\draw[color=darklav, line width=.8pt, draw opacity=1] (922, 740) -- (1392.61, 740);
\draw[color=darklav, line width=.8pt, draw opacity=1] (908, 780) -- (1378.61, 780);
\draw[color=darklav, line width=.8pt, draw opacity=1] (901, 800) -- (1371.61, 800);
\draw[color=darklav, line width=.8pt, draw opacity=1] (888, 840) -- (1357.61, 840);
\draw[color=darklav, line width=.8pt, draw opacity=1] (883, 860) -- (1353.61, 860);

\pgfmathsetmacro{\Xo}{1540}
\pgfmathsetmacro{\Yo}{520}
\foreach \i in {0,...,36}{
  \foreach \j in {0,...,24}{
    \node[graydot] at ({\Xo + 20*\i},{\Yo + 20*\j}) {};
  }
}

\foreach \j in {0,...,5}{
  \pgfmathsetmacro{\XL}{1540+\j*20}
  \pgfmathsetmacro{\YL}{920-\j*60}
  \foreach \i in {0,...,5}{
    \draw [color=black, draw opacity=1]
  (\XL+\i*100,\YL+80-\i*20)  -- (\XL+100+\i*100,\YL+60-\i*20) -- (\XL+120+\i*100,\YL-\i*20) --(\XL+20+\i*100,\YL+20-\i*20) -- cycle;
  }
}


\draw[color=lightgreen, line width=.8pt, draw opacity=1] (1662, 640) -- (2224, 640);
\draw[color=lightgreen, line width=.8pt, draw opacity=1] (1642, 700) -- (2204, 700);
\draw[color=lightgreen, line width=.8pt, draw opacity=1] (1622, 760) -- (2184, 760);
\draw[color=lightgreen, line width=.8pt, draw opacity=1] (1600, 820) -- (2164, 820);
\draw[color=lightgreen, line width=.8pt, draw opacity=1] (1582, 880) -- (2144, 880);

\draw (490,300) node [anchor=north west][inner sep=0.75pt]    {$P$};
\draw (1120,335) node [anchor=north west][inner sep=0.75pt]    {$2P$};
\draw (1840,385) node [anchor=north west][inner sep=0.75pt]    {$3P$};
\draw (480,960) node [anchor=north west][inner sep=0.75pt]    {$4P$};
\draw (1120,1010) node [anchor=north west][inner sep=0.75pt]    {$5P$};
\draw (1840,1050) node [anchor=north west][inner sep=0.75pt]    {$6P$};

\end{tikzpicture}
    \caption{The pattern of lattice diameter lines repeats periodically under dilation of $P$.}
    \label{fig:dilations}
\end{figure}

    Then for all $k \in \N$ we get\vspace{-1em}
    \begin{align*}
        \LD_P(k)=\begin{cases}
            2k+1 & \text{if } k \equiv 1 \pmod 3,\\
            \frac{4}{3} k + \frac{4}{3} & \text{if } k \equiv 2 \pmod 3,\\
            \frac{2}{3} k + 1 & \text{if } k \equiv 0 \pmod 3.
        \end{cases}
    \end{align*}
    Indeed, $\LD_P(1)=3,\ \LD_P(2)=4,\ \LD_P(3)=3,\ \LD_P(4)=9,\ \LD_P(5)=8$ and $\LD_P(6)=5$.
\end{example}

For a set $X \subseteq \R^d$ we write $kX:= \{k \bx \st \bx \in X\}$ for the dilated set by $k \in \N$. In particular, for a polytope $P$, any line intersecting $kP$ can be written as the dilated line $kL$ for a \mbox{line $L$ intersecting $P$.}

\begin{lemma}\label{lemma:same-dir}
    Let $P$ be a lattice polygon and let $k \in \N$. Then, a diameter direction of $kP$ is also a diameter direction of $P$.
\end{lemma}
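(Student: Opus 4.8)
The plan is to factor $\ldiam$ through a continuous quantity that scales linearly under dilation, and then carry out a rounding argument that is uniform in the dilation factor $k$. For a lattice polygon $Q\subset\R^2$ and a primitive vector $\bv\in\Z^2\setminus\{0\}$, let $w_\bv(Q)$ be the maximum of $\nvol(L\cap Q)$ over all lines $L$ with direction $\bv$ that meet $Q$ (attained, by compactness of $Q$), and set $w(Q):=\max_\bv w_\bv(Q)$ over primitive $\bv$. This last maximum is attained: since $Q$ is two-dimensional, one of its edges already witnesses $w(Q)\ge 1$, while $\nvol(L\cap Q)\le\diam(Q)/\|\bv\|$ shows that only finitely many directions $\bv$ satisfy $w_\bv(Q)\ge 1$. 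The one elementary but decisive observation is that $w_\bv$ is linear in $k$: the map $X\mapsto kX$ is a bijection from the $\bv$-direction lines meeting $Q$ onto those meeting $kQ$, and $\nvol(kL\cap kQ)=k\,\nvol(L\cap Q)$, so $w_\bv(kQ)=k\,w_\bv(Q)$ and hence $w(kQ)=k\,w(Q)$.

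Next I would establish the following dictionary: for every primitive $\bv$, the maximum of $|L\cap Q\cap\Z^2|$ over all $\bv$-lattice lines $L$ equals $\lfloor w_\bv(Q)\rfloor+1$. The inequality ``$\le$'' is \Cref{lem:nvol}, as any such $L$ satisfies $|L\cap Q\cap\Z^2|\le\lfloor\nvol(L\cap Q)\rfloor+1\le\lfloor w_\bv(Q)\rfloor+1$. For ``$\ge$'', the proof of \Cref{lemma:vertex_edge_pair} produces a $\bv$-lattice line $M$ through a vertex of $Q$ with $\nvol(M\cap Q)=w_\bv(Q)$; since $M$ contains a lattice point on the boundary of $Q$, the upper bound of \Cref{lem:nvol} is attained, giving $|M\cap Q\cap\Z^2|=\lfloor w_\bv(Q)\rfloor+1$. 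Taking the maximum over primitive $\bv$ (using $\max_\bv\lfloor w_\bv(Q)\rfloor=\lfloor\max_\bv w_\bv(Q)\rfloor$, valid because the outer maximum is attained) yields $\ldiam(Q)=\lfloor w(Q)\rfloor$; moreover, $\bv$ is a diameter direction of $Q$ if and only if $\lfloor w_\bv(Q)\rfloor=\lfloor w(Q)\rfloor$.

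To conclude, let $\bv$ be a diameter direction of $kP$. Applying the dictionary to $kP$ together with the scaling relations gives $\lfloor k\,w_\bv(P)\rfloor=\lfloor w_\bv(kP)\rfloor=\lfloor w(kP)\rfloor=\lfloor k\,w(P)\rfloor$. Set $n:=\lfloor w(P)\rfloor=\ldiam(P)\ge 1$ and note $w_\bv(P)\le w(P)$. If $\bv$ were not a diameter direction of $P$, then $\lfloor w_\bv(P)\rfloor<n$, so $w_\bv(P)<n\le w(P)$; multiplying through by $k$ yields $k\,w_\bv(P)<kn\le k\,w(P)$, hence $\lfloor k\,w_\bv(P)\rfloor\le kn-1<kn\le\lfloor k\,w(P)\rfloor$, contradicting the equality above. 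Therefore $\bv$ is a diameter direction of $P$, as claimed.

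The main obstacle I anticipate is the temptation to prove the false identity $\ldiam(kP)=k\,\ldiam(P)$, which would make the lemma trivial: it already fails for the polygon $P$ of \Cref{ex:qp}, where $\ldiam(P)=4$ but $\ldiam(2P)=9$, the discrepancy being exactly the rounding in \Cref{lem:nvol}. The real work is to isolate the genuinely linear quantity $w$ and then to compare $\lfloor k\,w_\bv(P)\rfloor$ with $\lfloor k\,w(P)\rfloor$ simultaneously for all $k$; the elementary implication ``$a<n\le b$ with $n\in\Z$ forces $\lfloor ka\rfloor<\lfloor kb\rfloor$ for every $k\ge 1$'' is what promotes the obvious asymptotic comparison to one that holds at every dilation factor.
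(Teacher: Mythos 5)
Your proof is correct and follows essentially the same route as the paper: both arguments rest on \Cref{lemma:vertex_edge_pair} (a direction-wise optimal lattice line through a vertex), the floor formula of \Cref{lem:nvol}, homogeneity of $\nvol$ under dilation, and the observation that $a<n\le b$ with $n\in\Z$ forces $\lfloor ka\rfloor<\lfloor kb\rfloor$. Your packaging via $w_\bv$ merely makes explicit the rounding step that the paper's proof treats tersely when passing from the inequality in $kP$ back down to $P$.
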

\begin{proof}
    Let $\bu$ be a diameter direction of $kP$ and let $kL$ be a lattice diameter line with direction $\bu$ that passes through a vertex of $kP$. (That is, $L$ is a line that passes through the corresponding vertex of $P$). Let $L'$ be a lattice line that passes through a vertex of $P$. Then, $kL'$ also passes through a vertex of $kP$ and by \Cref{lem:nvol}
    \begin{align}\label{eq:diam-dir-stay}
        \lfloor \nvol(kL \cap kP) \rfloor +1 = |kL \cap kP \cap \Z^2|  \geq |kL' \cap kP \cap \Z^2| = \lfloor \nvol(kL' \cap kP) \rfloor +1.
    \end{align}
    The normalized volume is a homogeneous function in $k$, so \eqref{eq:diam-dir-stay} and \Cref{lem:nvol} yield
    \begin{align*}
        |L \cap P \cap \Z^2|  = \lfloor \nvol(L \cap P) \rfloor +1 \geq \lfloor \nvol(L' \cap P) \rfloor +1 =|L' \cap P \cap \Z^2|.
    \end{align*}
    Since for every diameter direction of $P$ there exists a lattice diameter line in that direction, that passes through a vertex, we proved that $L$ is a lattice diameter line of $P$. In particular, $\bu$ is a diameter direction of $P$.
\end{proof}

The following lemma shows that under dilation of the polygon only certain lattice diameter lines remain, specifically those for which $L \cap P$ has the largest normalized length. \Cref{fig:nvol-dom} illustrates this phenomenon.


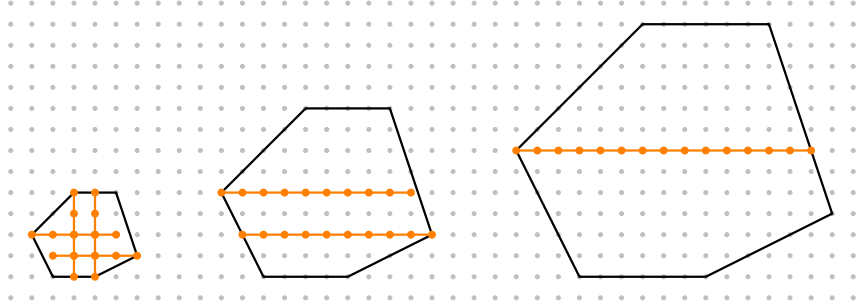
\begin{figure}[ht!]
    \centering
    \begin{tikzpicture}[thin, scale=0.28]\vspace{0.5cm}
        \foreach \x in {-4,...,36} {
            \foreach \y in {-1,...,13} {
                \node at (\x,\y) [circle, draw=lightgray, fill= lightgray, minimum width=1.5pt] {};   
            }
        }
    
    \draw[line width=0.3mm, black] (-2,0) -- (-3,2) --(-1,4) --(1,4) -- (2,1)-- (0,0) -- cycle;
    \draw[line width=0.3mm, black](8,0) -- (6,4) -- (10,8) -- (14,8) -- (16,2) -- (12,0) -- cycle;
    \draw[line width=0.3mm, black] (23,0) -- (20,6) -- (26,12) -- (32,12) -- (35,3) -- (29,0) -- cycle;

    
    \draw[line width=0.3mm, orange] (-3,2) -- (1,2);
    \draw[line width=0.3mm, orange] (-2,1) -- (2,1);
    
    \draw[line width=0.3mm, orange] (-1,0) -- (-1,4);
    \draw[line width=0.3mm, orange] (0,0) -- (0,4);

     \foreach \x in {-2, ..., 2} {
    \node at (\x,1) [circle, fill=orange, minimum width=3pt] {};
        }
    \foreach \x in {-3, ..., 1} {
    \node at (\x,2) [circle, fill=orange, minimum width=3pt] {};
        }
    \foreach \y in {0, ..., 4} {
    \node at (-1,\y) [circle, fill=orange, minimum width=3pt] {};
    \node at (0,\y) [circle, fill=orange, minimum width=3pt] {};
        }

    \draw[line width=0.3mm, orange] (6,4) -- (15,4);
    \draw[line width=0.3mm, orange] (7,2) -- (16,2);

    \foreach \x in {7,8, ..., 16} {
        \node at (\x,2) [circle, fill=orange, minimum width=3pt] {};
    }
    \foreach \x in {6,7,..., 15} {
        \node at (\x,4) [circle, fill=orange, minimum width=3pt] {};
    }

    \draw[line width=0.3mm, orange] (20,6) -- (34,6);

    \foreach \x in {20, ..., 34} {
        \node at (\x,6) [circle, fill=orange, minimum width=3pt] {};
    }
    
    \end{tikzpicture}
    \caption{Lattice diameter segments can ``disappear'' when dilating a polygon.}
    \label{fig:nvol-dom}
\end{figure}

\begin{lemma}\label{lemma:same-nvol}
    Let $P$ be a lattice polygon. There exists a positive integer $q$ such that for all $k\geq q$, and all lattice diameter lines $kL,kL'$ of $kP$, $\nvol(L \cap P)=\nvol(L' \cap P)$.
\end{lemma}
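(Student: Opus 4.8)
The plan is to reduce, via \Cref{lemma:same-dir} and \Cref{lem:nvol}, to a statement about finitely many ``dominant'' directions, and then to analyze one direction at a time using the piecewise-linear structure of the cross-section–length function of $P$.

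\textit{Setup and reduction.} Since $P\cap\Z^2$ is finite, $P$ has only finitely many lattice diameter directions; let $\bu_1,\dots,\bu_r$ be the primitive ones, and by \Cref{lemma:same-dir} every diameter direction of every $kP$ is among them. For each $i$ set $\mu_i:=\max\{\nvol(L\cap P):L\text{ a }\bu_i\text{-line meeting }P\}$. By \Cref{lemma:vertex_edge_pair} this maximum is attained by a $\bu_i$-lattice line through a vertex of $P$; writing the endpoints of $L\cap P$ as intersections of such a line with the facet lines $\langle\ba_e,\cdot\rangle=\beta_e$ of $P$ shows $\mu_i\in\tfrac1{c_i}\Z$, with $c_i:=\operatorname{lcm}_e|\langle\ba_e,\bu_i\rangle|$ over the edges $e$ of $P$. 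Normalized length is homogeneous of degree one, so the corresponding maximum for $kP$ is $k\mu_i$; combined with \Cref{lem:nvol} (whose upper bound is attained because the optimal line meets $\partial P$ in a lattice point), the largest number of lattice points on a $\bu_i$-lattice line of $kP$ is exactly $\lfloor k\mu_i\rfloor+1$. Hence $\ldiam(kP)=\max_i\lfloor k\mu_i\rfloor=\lfloor k\mu^*\rfloor$ with $\mu^*:=\max_i\mu_i$, and if $\delta:=\min\{\mu^*-\mu_i:\mu_i<\mu^*\}>0$ then $\lfloor k\mu^*\rfloor>\lfloor k\mu_i\rfloor$ for all such $i$ once $k>1/\delta$. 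So for large $k$ every diameter direction of $kP$ lies in $D^*:=\{\bu_i:\mu_i=\mu^*\}$, and it suffices to find $q$ such that for $k\ge q$, $\bu_i\in D^*$, and every $\bu_i$-lattice diameter line $kL$ of $kP$ one has $\nvol(kL\cap kP)=k\mu^*$, i.e.\ $\nvol(L\cap P)=\mu^*$.

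\textit{One direction.} Fix $\bu_i\in D^*$ and apply a unimodular map so $\bu_i=\be_1$. The $\bu_i$-lattice lines of $kP$ are $\Lambda_j=\{y=j\}$, $j\in\Z$, with $\Lambda_j\cap kP=[L_k(j),R_k(j)]\times\{j\}$, where $L_k$ is convex piecewise-linear, $R_k$ concave piecewise-linear, both with breakpoints at integers and slopes rational with denominator dividing $c_i$, and $R_k(j)-L_k(j)=k\,w(j/k)$ for the concave piecewise-linear width function $w$ of $P$ in direction $\bu_i$ (so $\max w=\mu^*$). Its breakpoints being integers, $w$ attains its maximum on a set of consecutive integers $\{t_1^*,\dots,t_2^*\}$, and at $t_1^*$ (likewise $t_2^*$) one of $L_k(kt_1^*),R_k(kt_1^*)$ is the $x$-coordinate of a vertex of $kP$, hence an integer; so $\Lambda_{kt_1^*}$ meets $\partial(kP)$ in a lattice point and by \Cref{lem:nvol} carries $\lfloor k\mu^*\rfloor+1=\ldiam(kP)+1$ lattice points, i.e.\ it is a lattice diameter line realizing $\nvol(L\cap P)=\mu^*$. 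It remains to rule out, for large $k$, any lattice diameter line $\Lambda_j$ with $R_k(j)-L_k(j)<k\mu^*$. Such a $\Lambda_j$ lies off the plateau, say $j=kt_2^*+\iota$ with $\iota\ge1$ (the other side is symmetric); on the linear piece above $t_2^*$ the width decreases at a fixed rational rate $C>0$ with denominator dividing $c_i$, so $R_k(j)-L_k(j)=k\mu^*-C\iota$, and $|\Lambda_j\cap kP\cap\Z^2|\le\lfloor k\mu^*-C\iota\rfloor+1$ forces $C\iota\le\{k\mu^*\}<1$, leaving only the finitely many offsets $\iota<1/C$. For each of these one compares $\lfloor R_k(kt_2^*+\iota)\rfloor-\lceil L_k(kt_2^*+\iota)\rceil$ with $\lfloor R_k(kt_2^*)\rfloor-\lceil L_k(kt_2^*)\rceil$: since $\Lambda_{kt_2^*}$ passes through the vertex, one of its endpoints is an exact integer, and on $\Lambda_{kt_2^*+\iota}$ the endpoints have moved off it by $C^{-1}$-bounded multiples of the rational slopes; a direct evaluation of these floors and ceilings shows the count on $\Lambda_j$ is strictly below $\ldiam(kP)+1$ once $k$ exceeds a threshold depending only on $c_i$. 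Taking $q$ larger than $1/\delta$ and than these thresholds over the finitely many $\bu_i\in D^*$ finishes the proof.

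\textit{Main obstacle.} The crux is the last comparison. \Cref{lem:nvol} supplies the extra lattice point ``$+1$'' only when the line meets $\partial(kP)$ in a lattice point, so a priori a line strictly shorter than $k\mu^*$ could still attain $\ldiam(kP)+1$ lattice points through a favourable alignment of the fractional parts of its endpoints. Excluding this rests on combining (i) that the maximal-length line genuinely contains a vertex of $kP$, pinning one of its endpoints at an integer, with (ii) the bounded denominators $c_i$, which constrain where the off-plateau endpoints can land; the resulting bookkeeping with $\lfloor\cdot\rfloor$, $\lceil\cdot\rceil$, and the residue of $k$ modulo the relevant modulus is the delicate technical core (and, if a uniform estimate is awkward, one can instead carry it out one residue class of $k$ at a time, the case $k\mu^*\in\Z$ being immediate).
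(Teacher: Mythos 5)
Your reduction to finitely many directions, the unimodular change of coordinates to $\bu_i=\be_1$, and the identification $\ldiam(kP)+1=\lfloor k\mu^*\rfloor+1$ via \Cref{lemma:vertex_edge_pair} and \Cref{lem:nvol} all track the paper's argument (which phrases the same reduction more compactly: fix $L$ maximizing $\nvol(L\cap P)=p/q$ over rational lines in diameter directions and compare any competitor against it). The genuine gap is the step you yourself flag as the ``delicate technical core'': ruling out an off-plateau line $\Lambda_j$ with $\lfloor k\mu^*-C\iota\rfloor=\lfloor k\mu^*\rfloor$ that nevertheless collects $\lfloor k\mu^*\rfloor+1$ lattice points through a favourable alignment of its endpoints. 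You assert that ``a direct evaluation of these floors and ceilings shows the count on $\Lambda_j$ is strictly below $\ldiam(kP)+1$ once $k$ exceeds a threshold,'' but no such evaluation can succeed, because the assertion is false. Take $Q=\conv(\{(0,0),(1,3),(14,5),(12,-3)\})$. Its unique diameter direction is $(1,0)$; the widest horizontal chord is $[(0,0),(\tfrac{51}{4},0)]$, through the vertex $(0,0)$, and the width decays at rate $C=\tfrac{1}{12}$ per unit height above it. For every $k\equiv 1\pmod 4$ the chord of $kQ$ at height $0$ is $[0,\tfrac{51k}{4}]\times\{0\}$ with $\tfrac{51k+1}{4}$ lattice points, while the chord at height $1$ is $[\tfrac13,\tfrac{51k+1}{4}]\times\{1\}$, whose right endpoint is an integral boundary point, so it also contains exactly $\tfrac{51k+1}{4}$ lattice points. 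Both are lattice diameter lines of $kQ$, yet the corresponding normalized lengths are $\nvol(L\cap Q)=\tfrac{51}{4}$ and $\nvol(L'\cap Q)=\tfrac{51}{4}-\tfrac{1}{12k}$. Working one residue class of $k$ at a time, as you suggest as a fallback, does not help: the tie occurs for \emph{every} $k$ in the class $1\bmod 4$, so no threshold $q$ exists.

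To be fair to you, this is not only a defect of your write-up: the same example contradicts the statement as it stands, and the paper's own proof passes over exactly this point, deducing $\lfloor\nvol(kL'\cap kP)\rfloor+1<\lfloor\nvol(kL\cap kP)\rfloor+1$ from the strict inequality of the normalized volumes alone --- a strict inequality of floors that does not follow from a strict inequality of reals (in the example the two floors coincide). So you have correctly isolated where the real difficulty lies, and your honesty in flagging it is to your credit; but the proposal does not close the gap, and it cannot be closed without changing the conclusion (for instance to $\lfloor k\,\nvol(L\cap P)\rfloor=\lfloor k\,\nvol(L'\cap P)\rfloor$, which is what \Cref{lem:nvol} actually delivers, or by restricting attention to diameter lines of maximal normalized length). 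Any fix must also be propagated to the use of this lemma in \Cref{thm:QP}, where it is invoked to confine all lattice diameter lines to chambers between parallel edges.
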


\begin{proof}
    Let $\Rcal$ be the set of rational lines whose direction is a diameter direction of $P$.
    By \Cref{lemma:vertex_edge_pair} we can choose a lattice diameter line $L \in \Rcal$ such that $\nvol(L \cap P) \geq \nvol(L' \cap P)$ for all rational lines $L' \in \Rcal$. Let $\nvol(L \cap P)=\frac{p}{q}$ for some $p,q \in \N$ and note that $\frac{p}{q} \geq 1$.

    Let $kL'$ be a lattice diameter line of $kP$. By \Cref{lemma:same-dir} its direction is a diameter direction of $P$, so $L' \in \Rcal$. It suffices to show $\nvol(L \cap P) \leq \nvol(L' \cap P)$, so that we obtain equality. Towards a contradiction, suppose  $\nvol(L \cap P) > \nvol(L' \cap P)$. The normalized volume is a homogeneous function in $k$, so 
    \begin{align*}
        \nvol(kL' \cap kP) < \nvol(kL \cap kP) = p + \frac{(k-q)p}{q}.
    \end{align*}
    Together with \Cref{lem:nvol} and using $\frac{(k-q)p}{q}\geq 1$, this yields
    \begin{align*}
        |kL' \cap kP \cap \Z^2| \leq \lfloor \nvol(kL' \cap kP)\rfloor +1 < p  + \left\lfloor \frac{(k-q)p}{q}\right\rfloor +1 = |kL \cap kP \cap \Z^2|,
    \end{align*}
    a contradiction to $kL'$ being a lattice diameter line of $kP$.
\end{proof}

Next we observe that parallel lattice diameter lines, for which the segment $\vol(L \cap P)$ is maximal, occur between two parallel edges of $P$.

\begin{lemma}\label{lemma:LD-in-chambers}
    Let $P$ be a lattice polygon and let $\bu \in \Z^2 \setminus \{0\}$. Define
    \[
    \Lcal = \{ L \st \text{ L is a $\bu$-lattice line and } \vol(L \cap P) \geq \vol(L' \cap P) \text{ for all $\bu$-lattice lines $L'$}\}.
    \]
    If $|\Lcal|>1$, then there exist parallel edges $e_1,e_2$ of $P$ such that $L \subseteq \conv(e_ 1 \cup e_2)$ for all $L \in \Lcal$.
\end{lemma}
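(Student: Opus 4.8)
The plan is to reduce the statement to a one-dimensional fact about the width function of $P$ in direction $\bu$. First I would normalize: replacing $\bu$ by the primitive vector in its direction changes neither the set of $\bu$-lattice lines nor the numbers $\vol(L\cap P)$, and unimodular transformations preserve lattice polygons, lattice lines, edges, and parallelism, so I may assume $\bu=(1,0)$. The $\bu$-lattice lines are then the horizontal lines $L_n=\{(x,n):x\in\R\}$, $n\in\Z$. Let $[a,b]$ be the image of $P$ under $(x,y)\mapsto y$; since $P$ is a lattice polygon, $a,b\in\Z$. For $y\in[a,b]$ set $g_+(y)=\max\{x:(x,y)\in P\}$ and $g_-(y)=\min\{x:(x,y)\in P\}$, so $P\cap L_y$ is the segment from $(g_-(y),y)$ to $(g_+(y),y)$ and $f(y):=\vol(P\cap L_y)=g_+(y)-g_-(y)$. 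By convexity of $P$, $g_+$ is concave and $g_-$ is convex, hence $f$ is concave; moreover $g_\pm$ are piecewise linear with breakpoints only at $y$-coordinates of vertices of $P$ — which are integers — so $f$ is affine on each interval $[m,m+1]$ with $m,m+1\in[a,b]\cap\Z$.

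Next I would translate $\Lcal$ into this language: $\Lcal=\{L_n : n\in[a,b]\cap\Z,\ f(n)=M\}$, where $M:=\max\{f(n):n\in[a,b]\cap\Z\}$. Assume $|\Lcal|>1$ and let $n<n'$ be the smallest and largest integers in this set. The crucial step — and the place where it is essential that $P$ is a \emph{lattice} polygon — is to upgrade ``$f=M$ at $n$ and $n'$'' to ``$f\equiv M$ on all of $[n,n']$''. For any integer $m$ with $n\le m\le n'$, concavity of $f$ forces $f(m)\ge M$, while $f(m)\le M$ by maximality; so $f(m)=M$. Since $f$ is affine on each $[m,m+1]$ and equals $M$ at both endpoints whenever $m,m+1\in\{n,\dots,n'\}$, it is identically $M$ on $[n,n']$. (For a non-lattice polygon $f$ could rise strictly above $M$ between two consecutive integers, and the lemma would be false.) The same extremality of $n,n'$ gives $f(m)<M$ for every integer $m\in[a,b]\setminus[n,n']$, so $\Lcal=\{L_n,L_{n+1},\dots,L_{n'}\}$.

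Then I would extract the two parallel edges from the flat stretch $[n,n']$. A genuine breakpoint of $g_+$ at some $y_0\in(n,n')$ would make the slope of $g_+$ strictly decrease there, and a breakpoint of $g_-$ would make its slope strictly increase; either way the slope of $f=g_+-g_-$ would strictly decrease at $y_0$, contradicting $f\equiv M$ near $y_0$. Hence $g_+$ and $g_-$ are both affine on $[n,n']$, say $g_+(y)=c_++\sigma_+ y$ and $g_-(y)=c_-+\sigma_- y$; from $g_+-g_-\equiv M$ I get $\sigma_+=\sigma_-=:\sigma$ and $c_+-c_-=M$. Since $[n,n']$ has positive length inside $[a,b]$, it meets the interior of the $y$-range of $P$, where the horizontal slices of the two-dimensional set $P$ are nondegenerate; hence $M>0$. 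The segment $\{(g_+(y),y):y\in[n,n']\}$ has positive length and lies in $\partial P$, so it is contained in a single edge $e_1$ with $\aff(e_1)=\{(x,y):x=c_++\sigma y\}$; similarly $\{(g_-(y),y):y\in[n,n']\}$ lies in an edge $e_2$ with $\aff(e_2)=\{(x,y):x=c_-+\sigma y\}$. Both affine hulls have direction $(\sigma,1)$, so $e_1\parallel e_2$, and $e_1\ne e_2$ because $c_+\ne c_-$.

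Finally, for each $L_m\in\Lcal$ (so $n\le m\le n'$), the segment $L_m\cap P$ runs from $(g_-(m),m)$ to $(g_+(m),m)$, and these endpoints lie on $e_2$ and $e_1$ respectively since $m\in[n,n']$ is in the $y$-range of both edges; hence $L_m\cap P=\conv\{(g_-(m),m),(g_+(m),m)\}\subseteq\conv(e_1\cup e_2)$, which is the claim. I expect the only real obstacle to be the integrality argument of the second paragraph (turning equality of $f$ at two integers into equality on a whole interval); everything else is routine manipulation of piecewise-linear convex and concave functions, together with the harmless extra observation that $M>0$ because $P$ is two-dimensional.
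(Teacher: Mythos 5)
Your proof is correct and follows essentially the same route as the paper's: both analyze the concave, piecewise-linear length profile of the slices in direction $\bu$, whose breakpoints occur at the (integer) levels of the vertices of the lattice polygon, and conclude that two distinct integer maximizers force a flat stretch of that profile bounded by two parallel edges. If anything, your write-up is more explicit than the paper's about the key integrality step (upgrading equality of the length function at the two extreme integer maximizers to constancy on the whole interval between them), which the paper leaves largely implicit in its chamber decomposition.
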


\begin{proof}
    Let $\ba \in \Z^2\setminus \{0\}$ be orthogonal to $\bu$ and let $\bv_1, \ldots, \bv_n$ be the vertices of $P$, enumerated such that $\langle \ba, \bv_i \rangle \leq \langle \ba,\bv_{i+1} \rangle$ for all $i \in [n-1]$. Define the \textit{parallel chambers}
    \[
    C_i:= P \cap H^+(\ba,\langle \ba,\bv_i \rangle ) \cap H^-(\ba,\langle \ba,\bv_{i+1} \rangle )
    \]
    for all $i \in [n-1]$ for which $\langle \ba,\bv_i \rangle < \langle \ba , \bv_{i+1} \rangle$, so $C_i$ is full-dimensional. A chamber $C_i$ is bounded by $H(\ba,\langle \ba, \bv_i\rangle), \, H(\ba,\langle \ba, \bv_{i+1}\rangle)$, and two edges $e_{i1},e_{i2}$ of $P$. If the edges $e_{i1}$ and $e_{i2}$ are parallel, then all segments $C_i \cap H(\ba,b_i)$ for $b_i \in [ \langle \ba,\bv_i \rangle, \langle \ba,\bv_{i+1} \rangle]$ have the same length. Otherwise, there is a unique segment of the form $C_i \cap H(\ba,b_i)$ where $b_i \in [ \langle \ba,\bv_i \rangle, \langle \ba,\bv_{i+1} \rangle]$, which is the longest among all segments in this chamber. Note that $P \cap H(\ba,b_i) = C_i \cap H(\ba,b_i)$, so if in the setting of this lemma $|\Lcal|>1$, then all $L \cap P$ for $L \in \Lcal$ have the same length, and thus are contained in a chamber bounded by parallel edges.
\end{proof}

Finally, we count the number of $\bu$-lattice diameter lines in dilations of a rational parallelogram. For $\bu \in \Z^{2} \setminus \{0\}$, define $LD_{P,u}(k)$ as the number of $\bu$-lattice diameter lines (or segments) of $kP$.

\begin{proposition}\label{prop:QP-in-parallelotope}
    Let $R$ be a rational parallelogram with parallel edges in direction $\bu$ that each contain an integral vertex. Then there exists a positive integer $q$ such that \mbox{$\LD_{R,u}:\N_{\geq q} \to \N$} agrees with a quasi-polynomial function of degree one whose period divides $q$.
\end{proposition}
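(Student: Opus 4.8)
My plan is to first put $R$ into a convenient normal form by a unimodular change of coordinates, and then to read off $\LD_{R,\bu}$ from the Ehrhart quasi-polynomial of $R$ via a double count over horizontal lattice lines. Since the notions of $\bu$-lattice line and $\bu$-lattice diameter line depend on $\bu$ only up to a nonzero scalar, I may assume $\bu$ is primitive, and then — applying a map in $\GL(2,\Z)$, which commutes with dilation and preserves lattice points — that $\bu=\be_1$. The two edges of $R$ in direction $\bu$ are then horizontal; each contains a lattice point and hence lies at an integer height, so after a lattice translation they are $\{y=0\}\cap R$ and $\{y=H\}\cap R$ with $H\in\N$, $H\ge 1$. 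The lower edge contains a lattice vertex, namely one of its endpoints; reflecting in $\{x=0\}$ if needed I take it to be the left endpoint, so $R$ has vertices $(\ell_0,0),(\ell_0+w,0),(\ell_0+\sigma,H),(\ell_0+\sigma+w,H)$ with $\ell_0\in\Z$, horizontal width $w\in\Q_{>0}$ and horizontal shear $\sigma\in\Q$. The upper edge also has a lattice vertex, so $\ell_0+\sigma\in\Z$ or $\ell_0+\sigma+w\in\Z$; writing $d$ for the denominator of $w$ in lowest terms, it follows that $\sigma$, $\sigma+w$, and hence all four vertices of $R$, have denominator dividing $d$. In particular $dR$ is a lattice polygon.

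The key step is the following identity. For $k\in\N$ the $\bu$-lattice lines meeting $kR$ are exactly $L_n:=\{y=n\}$ for $n\in\{0,1,\dots,kH\}$, and since $R$ is a parallelogram with horizontal edges the slice $L_n\cap kR$ has constant Euclidean length $kw$, so $\nvol(L_n\cap kR)=kw$. By \Cref{lem:nvol}, $|L_n\cap kR\cap\Z^2|\in\{\lfloor kw\rfloor,\lfloor kw\rfloor+1\}$ for every $n$, and $L_0$ attains the upper value because it contains the lattice point $(k\ell_0,0)\in\partial(kR)$. Hence $\lfloor kw\rfloor+1$ is the maximum, and $\LD_{R,\bu}(k)$ is the number of $n$ with $|L_n\cap kR\cap\Z^2|=\lfloor kw\rfloor+1$. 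Summing the cardinalities over all $kH+1$ lines counts each lattice point of $kR$ once, so $\Ehr_R(k)=(kH+1)\lfloor kw\rfloor+\LD_{R,\bu}(k)$, that is
\[
\LD_{R,\bu}(k)=\Ehr_R(k)-(kH+1)\lfloor kw\rfloor \qquad (k\ge 1).
\]

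To conclude, I invoke Ehrhart's theorem for rational polytopes \cite{beckrobins}: $\Ehr_R$ is a quasi-polynomial of degree $2$, of period dividing $d$ (since $dR$ is a lattice polygon) and leading coefficient $\vol(R)=wH$. Writing $w=p/d$ in lowest terms, $\lfloor kw\rfloor=\frac{p}{d}k-\frac{1}{d}(kp\bmod d)$ is a quasi-polynomial of degree $1$ and period dividing $d$, so $(kH+1)\lfloor kw\rfloor$ is a quasi-polynomial of degree $2$, period dividing $d$, and leading coefficient $\frac{pH}{d}=wH$. The quadratic parts therefore cancel in the displayed difference, and $\LD_{R,\bu}$ agrees with a quasi-polynomial of degree at most $1$ whose period divides $q:=d$. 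Finally the degree is exactly $1$: letting $T$ be the denominator of $\sigma/H$ (so $T\mid H$), the slices $L_{jT}\cap kR$ for $0\le j\le kH/T$ all have a lattice point as left endpoint on $\partial(kR)$, hence — by \Cref{lem:nvol} again — attain $\lfloor kw\rfloor+1$, so $\LD_{R,\bu}(k)\ge kH/T+1$ is unbounded, and an unbounded quasi-polynomial of degree $\le 1$ cannot have identically vanishing linear coefficient.

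I expect the genuine content to be twofold: spotting the double-counting identity relating $\LD_{R,\bu}$ to $\Ehr_R$, and then checking that the quadratic (Ehrhart-leading) terms of $\Ehr_R(k)$ and $(kH+1)\lfloor kw\rfloor$ coincide so that they cancel. Without the identity, the alternative is a direct but fiddly analysis of the floor function $n\mapsto|L_n\cap kR\cap\Z^2|$, which splits into cases according to whether the lattice vertices on the two horizontal edges of $R$ lie on a common edge or are ``diagonal'', together with bookkeeping of a partial period of $n$ near $n=kH$; the normalization above is arranged precisely so that either route goes through cleanly.
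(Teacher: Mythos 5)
Your proof is correct, but it takes a genuinely different route from the paper's. The paper argues by direct periodicity analysis: after normalizing so that the non-horizontal edges have direction $(a,q)$, it shows that whether the slice at height $j$ of $kR$ is a $\bu$-lattice diameter segment depends only on $j \bmod q$ and on $k \bmod q$ (via translation-invariance of the defect $\nvol(S_j(k))-\nvol(S_j(k)_\Z)$), and then counts by splitting the horizontal lattice lines meeting $kR$ into $q$-blocks plus a remainder, obtaining $\LD_{R,\bu}(k)=n_i\cdot\blocks_i(k)+r_i$. You instead observe that every horizontal slice of $kR$ has normalized length exactly $kw$, so by \Cref{lem:nvol} each of the $kH+1$ horizontal lattice lines meets $kR$ in either $\lfloor kw\rfloor$ or $\lfloor kw\rfloor+1$ lattice points, the latter being the maximum since $L_0$ passes through an integral vertex; double counting then gives the identity $\LD_{R,\bu}(k)=\Ehr_R(k)-(kH+1)\lfloor kw\rfloor$, and quasi-polynomiality follows from Ehrhart's theorem for rational polytopes once one checks that the two quadratic leading terms (both equal to $\vol(R)=wH$) cancel. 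Your route is shorter, yields the statement for all $k\ge 1$ rather than only $k\ge q$, and produces a closed formula; the paper's route is self-contained (no rational Ehrhart theory) and its block data $n_i$, $r_i$, $\blocks_i$ is exactly what \Cref{ex:qp-with-proof} uses to compute the quasi-polynomial explicitly. One small slip in your final step: the parenthetical claim that $T\mid H$ is false in general (take $\sigma=1/2$ and $H=3$, so $T=6$), so the bound should read $\LD_{R,\bu}(k)\ge\lfloor kH/T\rfloor+1$ rather than $kH/T+1$; this quantity still diverges within every residue class, so your conclusion that the degree is exactly one is unaffected.
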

\begin{proof}
    After applying a unimodular transformation, we may assume that $(0,0)$ is a vertex of $R$ and that $\bu = (1,0)$. Let the other two parallel edges, adjacent to those in direction $\bu$, have direction $(a,q)$ for some $a,q \in \Z, q >0$. Then, one of the edges in direction $\bu$ has the form $[(0,0),(\frac{p}{q},0)]$ and without loss of generality $p>0$. This edge contains a $\bu$-lattice diameter segment. Furthermore, the other edge emanating from the origin has the form $[(0,0),\lambda(a,q)]$ for some $\lambda = \frac{w}{q}$ and again we can assume $w \in \N$. That is, $\lambda(a,q) = (\frac{aw}{q},w)$.
    
    Let $k \in \N$ be a dilation factor. The lattice width of $kR$ in direction $(0,1)$ is $kw$.
    For $j \in \{0, \ldots, kw\}$, define the lines on height $j$ as $L_j = (0,j) + \spn(\{(1,0)\})$ and denote the corresponding segments by $S_j(k) = L_j \cap kR$. When $k=1$ we just write $S_j$. Let $\Lcal(k) = \{S_j(k) \st j \in \{0, \ldots, kw\}\}$, that is, all $\bu$-lattice segments that intersect $kR$. For a segment $S \subseteq \R^2$, let $S_{{\Z}} := \conv (\{S \cap \Z^2\})$. So $S_j(k)$ contains a $\bu$-lattice diameter segment if and only if $S_j(k)_{\Z}$ is a $\bu$-lattice diameter segment.

    \emph{Claim 1:} $\nvol(S_j(k))-\nvol(S_{j}(k)_{\Z}) <1$ if and only if $S_{j}(k)_\Z$ is a $\bu$-lattice diameter segment.
    
    It suffices to show this for $k=1$ since $R$ is arbitrary. All $S_j$ have the same length, and $S_0$ is a lattice diameter segment since it contains a vertex. By \Cref{lem:nvol}
    \begin{align*}
        |S_0 \cap \Z^2| = \lfloor \nvol(S_0) \rfloor +1 
        = \lfloor \nvol(S_j) \rfloor +1  \geq |S_j \cap \Z^2| = \nvol({S_{j}}_{\Z}) +1.
    \end{align*}
    Thus $|S_0 \cap \Z^2| = |S_j \cap \Z^2|$ if and only if $\lfloor \nvol(S_j) \rfloor +1  = \nvol({S_{j}}_{\Z}) +1$ which holds if and only if $\nvol(S_j)- \nvol(S_{j{\Z}}) < 1$. This proves \textit{Claim 1}.

    \textit{Claim 2:} $\bu$-lattice diameter segments $S_j(k)_{\Z}$ are determined by $j \pmod q$. 
    
    Let $\chi_k: \{0, \ldots, kw\} \to \{0,1\}$ be the indicator function with $\chi_k(j)=1$ if and only if $L_j$ is a $\bu$-lattice diameter line of $kR$. Let $j,j' \in \{0, \ldots, kw\}$ with $j'= j+tq$ for some $t \in \N$. Then $S_{j'}(k) =S_j(k)+(ta,tq)$, and $S_{j'}(k)_\Z =S_j(k)_{\Z}+(ta,tq)$. Thus
    \begin{align*}
         \nvol(S_j(k))-\nvol(S_{j}(k)_{\Z})=\nvol(S_{j'}(k))-\nvol(S_{j'}(k)_{\Z})
    \end{align*}
    and by \textit{Claim 1} $S_j(k)_\Z$ is a $\bu$-lattice diameter segment of $kR$ if and only if $S_{j'}(k)_\Z$ is.
    
    \textit{Claim 3:} $\bu$-lattice diameter segments $S_j(k)_\Z$ are determined by $k \pmod q$.
    
    Let $j \in \{0, \ldots, kw\}$ and $i \in \{0, \ldots, q-1\}$. If $S_j(k) = [\ba,\bb]$, then $S_j(i) = [\ba,\bb + ((k-i)\frac{p}{q},0)]$. When $k \equiv i \pmod q$ the right endpoints of $S_j(k)$ and $S_j(i)$ differ by a vector in $\Z \times \{0\}$, so
    \begin{align*}
         \nvol(S_j(k))-\nvol(S_j(k)_\Z)=\nvol(S_j(i))-\nvol(S_j(i)_\Z).
    \end{align*}
    By \textit{Claim 1} it follows that $S_j(k)_\Z$ is a $\bu$-lattice diameter segment of $kR$ if and only if $S_j(i)_\Z$ is a $\bu$-lattice diameter segment of $iR$.
    
    \textit{The counting function:} Note that $q$ and $w$ are values that only depend on $R$. Let $i \in \{0, \ldots, q-1\}$, and let $k=\tilde{k}q+i$ for some $\tilde{k} \in \N$. Define a $q$-block of $kR$ as the set of lines $L_{tq}, \ldots, L_{tq+q-1}$ for those $t \in \N$ for which $tq+q-1 \leq kw$. The number of $q$-blocks in $kR$ is
    \begin{align*}
         \blocks_i(k)  = \left \lfloor  \frac{kw+1}{q} \right\rfloor = \tilde{k}w + \left\lfloor \frac{iw+1}{q}\right\rfloor = \frac{wk}{q}  - \lp \frac{iw}{q} - \left\lfloor \frac{iw+1}{q} \right\rfloor \rp,
    \end{align*}
    which is a linear function in $k$. Next we count the number of $\bu$-lattice diameter lines in $kR$ per $q$-block. By \textit{Claim 3}, this only depends on the congruence class $k \bmod q$. Take $q+i$ as a representative for $i \bmod q$,
    since it has at least one $q$-block, and let
    \begin{align*}
        n_i := \sum_{j=0}^{q-1}\chi_{q+i}(j) = \# \{  L_j: j =0, \ldots, q-1,\, L_j \text{ is a $\bu$-lattice diameter line of } (q+i)P\}.
    \end{align*}
    Define $\rem_i$ as the number of lattice lines intersecting $kR$ not contained in a $q$-block, then $\rem_i= iw+1 \bmod q$. Clearly $kw +1 \equiv iw+1 \pmod q$. 
    The number of those lines that are $\bu$-lattice diameter lines is
    \begin{align*}
        r_i := \sum_{j=0}^{\rem_i} \chi_{i}(j)  = \# \{  L_j: j =0, \ldots, \rem_i, \, L_j \text{ is a $\bu$-lattice diameter line of } iP\}.
    \end{align*}
    Together we get $\LD_{R,u\,}(k) = n_i \cdot \blocks_{i}(k) + r_i$ which for fixed congruence class $ i \bmod q$ is a linear function in $k$. Thus $\LD_{R,u}$ agrees with a quasi-polynomial function of degree one, whose period \mbox{divides $q$}.
\end{proof}

Now we are ready to prove the main theorem of this subsection.

\begin{theorem}\label{thm:QP}
    Let $P$ be a lattice polygon. There exists a positive integer $q$ such that \mbox{$\LD_P: \N_{\geq q} \to \N$} agrees with a quasi-polynomial function of degree one whose period divides $q$.
\end{theorem}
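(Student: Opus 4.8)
The plan is to split $\LD_P$ into a finite sum of direction-wise counting functions and reduce each one to \Cref{prop:QP-in-parallelotope}.

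First I would reduce to finitely many directions. Let $D$ be the finite set of diameter directions of $P$, and for primitive $\bu\in D$ set $M_\bu:=\max\{\nvol(L\cap P):L\text{ a }\bu\text{-line meeting }P\}$, which by the $\vol$-maximization argument inside the proof of \Cref{lemma:vertex_edge_pair} is attained by a $\bu$-lattice line through a vertex of $P$, hence lies in $\Q$. Put $N:=\max_{\bu\in D}M_\bu$ and $D^*:=\{\bu\in D:M_\bu=N\}$. Combining \Cref{lem:nvol} with the homogeneity $\nvol(kL\cap kP)=k\,\nvol(L\cap P)$, the largest number of lattice points on a $\bu$-lattice line of $kP$ is $\lfloor kM_\bu\rfloor+1$ (the lower bound coming from the line through the dilated vertex, which carries a boundary lattice point), so $\ldiam(kP)+1=\lfloor kN\rfloor+1$; since $M_\bu<N$ forces $\lfloor kM_\bu\rfloor<\lfloor kN\rfloor$ for all large $k$ (there are only finitely many $\bu\in D$, and \Cref{lemma:same-dir} excludes $\bu\notin D$), the diameter directions of $kP$ are exactly $D^*$ for $k$ large. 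Grouping the lattice diameter lines of $kP$ by direction then yields $\LD_P(k)=\sum_{\bu\in D^*}\LD_{P,\bu}(k)$ for all sufficiently large $k$, so it suffices to treat each $\LD_{P,\bu}$ with $\bu\in D^*$.

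Next I would localize each summand. Fix $\bu\in D^*$ and primitive $\ba$ with $\langle\ba,\bu\rangle=0$, and let $\nu(h)$ denote $\nvol$ of the $\bu$-chord of $P$ at level $\langle\ba,\cdot\rangle=h$. Being a cross-section length of a convex polygon, $\nu$ is concave and piecewise linear, so $\{h:\nu(h)=N\}$ is a closed interval $[h^-,h^+]$ with endpoints $h^\pm=\langle\ba,\bv^\pm\rangle\in\Z$ at vertices $\bv^\pm$ of $P$. If $h^-=h^+$, then for large $k$ every $\bu$-lattice diameter line of $kP$ is, since $\bu$ is a diameter direction of $kP$, a lattice diameter line of $kP$, hence by \Cref{lemma:same-nvol} has $\nvol(L\cap P)=N$, hence equals $kL$ for the unique $\bu$-line $L$ at level $h^-$; this line passes through $k\bv^-$ and so realizes $\lfloor kN\rfloor+1$ lattice points, giving $\LD_{P,\bu}(k)=1$ eventually. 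If $h^-<h^+$, set $R:=P\cap\{h^-\le\langle\ba,\cdot\rangle\le h^+\}$; since $\nu\equiv N$ on $[h^-,h^+]$, all $\bu$-cross-sections of $R$ have the same length, so by convexity its other two edges are parallel and $R$ is a rational parallelogram with two edges in direction $\bu$, with $\bv^-,\bv^+$ appearing as vertices of $R$ on the $\bu$-edges at levels $h^-$ and $h^+$ (this is exactly the chamber picture of \Cref{lemma:LD-in-chambers}). For $k$ large, \Cref{lemma:same-nvol} pins every $\bu$-lattice diameter line of $kP$ to have $\nvol(L\cap P)=N$, hence to sit at a level in $[kh^-,kh^+]$, where the chord of $kP$ coincides with that of the slab $kR$; conversely every $\bu$-lattice diameter line of $kR$ carries $\lfloor kN\rfloor+1$ lattice points (by \Cref{lem:nvol}, using the vertex $k\bv^-$ on a $\bu$-edge of $kR$) and is therefore a lattice diameter line of $kP$. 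Thus $\LD_{P,\bu}(k)=\LD_{R,\bu}(k)$ for $k$ large.

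Finally I would assemble: \Cref{prop:QP-in-parallelotope}, applied to $R$ after a unimodular transformation sending $\bu$ to $(1,0)$, shows $\LD_{R,\bu}$ agrees eventually with a quasi-polynomial of degree at most one whose period divides a positive integer $q_\bu$ depending on $R$. Choosing $q$ at least the largest threshold produced above and a common multiple of all the $q_\bu$, $\bu\in D^*$, makes $\LD_P=\sum_{\bu\in D^*}\LD_{P,\bu}$ agree on $\N_{\ge q}$ with a quasi-polynomial of degree at most one and period dividing $q$ (of degree exactly one unless every $\bu\in D^*$ has $h^-=h^+$, in which case $\LD_P$ is eventually constant). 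The main obstacle is the localization step: when $kN$ is just below an integer — which is governed by $k\bmod q$ — a $\bu$-lattice line of $kP$ lying slightly outside the maximal slab $R$ could a priori still attain $\lfloor kN\rfloor+1$ lattice points, breaking the identity $\LD_{P,\bu}(k)=\LD_{R,\bu}(k)$; the essential input ruling this out is \Cref{lemma:same-nvol}, which forces all lattice diameter lines of $kP$ for $k$ large to share the same normalized length, necessarily the maximum $N$, confining their chords to $R$. A secondary chore is verifying that $R$ genuinely meets the hypotheses of \Cref{prop:QP-in-parallelotope} — in particular that $\bv^\pm$ occur as vertices, and not merely interior points, of the $\bu$-edges of $R$ — and that a single $q$ can simultaneously serve as the threshold for all the reductions and as a common period bound.
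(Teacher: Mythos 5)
Your proposal is correct and follows essentially the same route as the paper: group the lattice diameter lines of $kP$ by direction, use \Cref{lemma:same-dir} and \Cref{lemma:same-nvol} to stabilize the set of directions and pin all chords to the maximal normalized length, localize each direction's contribution to the parallelogram chamber of \Cref{lemma:LD-in-chambers} (or to a single line when that chamber degenerates), and sum the quasi-polynomials supplied by \Cref{prop:QP-in-parallelotope}. You are in fact somewhat more explicit than the paper about why the diameter directions of $kP$ eventually equal the set maximizing $\nvol$, and your observation that $\LD_P$ can be eventually constant (degree zero) when every direction admits a unique longest chord is a legitimate caveat to the stated ``degree one.''
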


\begin{proof}
    Let $L$ be a lattice diameter line of $P$ such that $\nvol(L \cap P)$ is maximal among all lattice diameter lines of $P$. Let $\nvol(L \cap P) =\frac{p}{q}$ for $p,q \in \N$. By \Cref{lemma:same-dir} there exists a set $\Dcal$ that is the set of diameter directions of $kP$ for all $k\geq q$.
    By \cite[Theorem~4.1]{Alarcon_PhD}, $|\Dcal| \leq 4$.

    Let $k \geq q$. By \Cref{lemma:same-nvol}, $\nvol(L \cap P)$ is the same for all lattice diameter lines $L$ of $kP$. Let $\Dcal = \Dcal_{=1} \cup \Dcal_{>1}$, where $\bu \in \Dcal_{=1}$ if $kP$ has exactly one $\bu$-lattice diameter line and otherwise $\bu \in \Dcal_{>1}$. As in the proof of \Cref{lemma:LD-in-chambers}, for $\bu \in \Dcal_{>1}$, there exist vertices $\bv_i, \bv_{i+1}$ of $P$ such that all $\bu$-lattice diameter lines are contained in the dilated rectangle $k\cdot R(\bu)$ for $R(\bu):=P \cap H^+(\ba,\langle \ba, \bv_i \rangle) \cap H^-(\ba,\langle \ba, \bv_{i+1} \rangle)$, where $\ba \in \bu^{\perp} \setminus \{0\}$. Then
    \begin{align*}
        \LD_P(k) = \sum_{\substack{R = R(\bu) \\ \bu \in \Dcal_{>1}}} \LD_{R,\bu\,}(k) + \sum_{\substack{\bu \in \Dcal_{=1}}} 1
    \end{align*}
    and by \Cref{prop:QP-in-parallelotope}, $\LD_{R,\bu\,}(k)$ agrees with a quasi-polynomial function of degree one whose period divides $q$, so $\LD_P(k)$ does as well.
\end{proof}

We return to \Cref{ex:qp} and demonstrate how we obtain the claimed quasi-polynomial.

\begin{example}\label{ex:qp-with-proof}
    $P = \conv (\{ (0,0),(1,3), (5,1), (6,4)\})$ and its dilates $2P$ and $3P$ are depicted in \Cref{fig:dilations-123} with their lattice diameter segments. $P$ has one diameter direction $\bu=(1,0)$, so \mbox{$\Dcal = \{(1,0)\}$}. All lattice diameter segments of $P$ are contained in the parallelogram $R = \conv (\{ (1/3,1), (1,3), (17/3,3)\})$. The slope of the left edge is $3$ which implies that the period of $\LD_{R,\bu}$ is $q=3$, and the lattice width of $R$ in direction $(0,1)$ is $w=2$. \Cref{thm:QP} guarantees a quasi-polynomial behavior for values $k \geq q$, but here we actually get it for all $k \in \N$.

    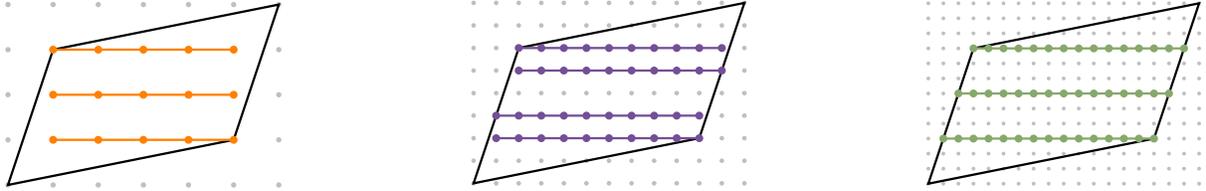
\begin{figure}[ht!]
\centering
\begin{minipage}[b]{0.33\textwidth}
\raggedright
\begin{tikzpicture}[scale=0.6]\vspace{0.5cm}
\hspace{1em}
\foreach \x in {0,...,6} {
    \foreach \y in {0,...,4} {
        \node at (\x,\y) [fill=lightgray, minimum width=2pt] {};   
    }
}
\draw[line width=0.3mm, black] (0,0) -- (1,3) -- (6,4) -- (5,1) -- cycle;

\draw[line width=0.3mm, orange] (1,1) -- (5,1);
\draw[line width=0.3mm, orange] (1,2) -- (5,2);
\draw[line width=0.3mm, orange] (1,3) -- (5,3);

\foreach \x in {1,...,5} {
    \node at (\x,1) [circle, fill=orange, minimum width=3pt] {};
    \node at (\x,2) [circle, fill=orange, minimum width=3pt] {};
    \node at (\x,3) [circle, fill=orange, minimum width=3pt] {};
}
\end{tikzpicture}
\end{minipage}%
\begin{minipage}[b]{0.33\textwidth}
\centering
\begin{tikzpicture}[scale=0.3]\vspace{0.5cm}
\foreach \x in {0,...,12} {
    \foreach \y in {0,...,8} {
        \node at (\x,\y) [circle, draw=white, fill=lightgray, minimum width=2.5pt] {};   
    }
}
\draw[line width=0.3mm, black] (0,0) -- (2,6) -- (12,8) -- (10,2) -- cycle;

\draw[line width=0.3mm, darklav] (1,2) -- (10,2);
\draw[line width=0.3mm, darklav] (1,3) -- (10,3);
\draw[line width=0.3mm, darklav] (2,5) -- (11,5);
\draw[line width=0.3mm, darklav] (2,6) -- (11,6);

\foreach \x in {1,...,10} {
    \node at (\x,2) [circle, fill=darklav, minimum width=3pt] {};
    \node at (\x,3) [circle, fill=darklav, minimum width=3pt] {};
}
\foreach \x in {2,...,11} {
    \node at (\x,5) [circle, fill=darklav, minimum width=3pt] {};
    \node at (\x,6) [circle, fill=darklav, minimum width=3pt] {};
}

\end{tikzpicture}
\end{minipage}%
\begin{minipage}[b]{0.33\textwidth}
\raggedleft
\begin{tikzpicture}[scale=0.2]\vspace{0.5cm}
\foreach \x in {0,...,18} {
    \foreach \y in {0,...,12} {
        \node at (\x,\y) [circle, draw=white, fill=lightgray, minimum width=2.2pt] {};   
    }
}
\draw[line width=0.3mm, black] (0,0) -- (3,9) -- (18,12) -- (15,3) -- cycle;

\draw[line width=0.3mm, lightgreen] (1,3) -- (15,3);
\draw[line width=0.3mm, lightgreen] (2,6) -- (16,6);
\draw[line width=0.3mm, lightgreen] (3,9) -- (17,9);

\foreach \x in {1,...,15} {
    \node at (\x,3) [circle, fill=lightgreen, minimum width=3pt] {};
}
\foreach \x in {2,...,16} {
    \node at (\x,6) [circle, fill=lightgreen, minimum width=3pt] {};
}
\foreach \x in {3,...,17} {
    \node at (\x,9) [circle, fill=lightgreen, minimum width=3pt] {};
}
\end{tikzpicture}
\hspace{1em}
\end{minipage}
\caption{Lattice diameter segments in dilates of $P$ to visualize the values $n_i,\rem_i,r_i$ and $\blocks_i$.}
\label{fig:dilations-123}
\end{figure}
    
    For $i \in \{0, 1,2\}$ and $k \equiv i \pmod 3$ we partition the $\bu$-lattice lines that intersect $kR$ into $\blocks_i(k)$ $q$-blocks and $\rem_i$ remaining lattice lines. Here, $R$ and $2R$ have one $q$-block, and $3R$ has two $q$-blocks. These examples suffice to read off $n_i$, the number of lattice diameter lines per $q$-block, and $r_i$, the number of lattice diameter lines in the remaining $\rem_i$ lines, of $kR$ for any $k$. We get
  \begin{table}[h!]
    \centering
    \begin{tabular}{c|c|c|c|c}
    $i$ & $n_i$ & $\blocks_i(k)$ &$\rem_i$ & $r_i$\\ \hline \rule{0pt}{2.5ex}
    1 \hspace{1pt} & 3 & $\tfrac{1}{3}(2k+1)$ & 0 & 0 \\[3pt]
    2 & 2 & $\tfrac{1}{3}(2k-1)$ & 2 & 2 \\[3pt]
    0 & 1 & $\tfrac{1}{3}(2k)$ & 1 & \ 1.
    \end{tabular}\vspace{-1em}
    \caption*{}
\end{table}

\vspace{-1em}
Then, as in the proof of \Cref{thm:QP},
\begin{align*}
    \LD_P(k) = \LD_{R, (1,0)}(k) = n_i \cdot \blocks_i(k)+ r_i = \begin{cases}
         2k+1 & \text{if } k \equiv 1 \pmod 3,\\
            \frac{4}{3} k + \frac{4}{3} & \text{if } k \equiv 2 \pmod 3,\\
            \frac{2}{3} k + 1 & \text{if } k \equiv 0 \pmod 3.
    \end{cases}
\end{align*}
    
\end{example}

 We conjecture that counting lattice diameter segments or more generally counting $\ell$-slices of $P$ that contain the most lattice points among all $\ell$-dimensional slices of $P$ have a similar behavior under dilation of $P$. More precisely:
 
\begin{conjecture}
   Let $P$ be a lattice $d$-polytope, and $\ell \in [d]$. Define $\LD_P^{\ell}(k)$ as the number of $\ell$-dimensional slices of $P$ that contain the most lattice points among all $\ell$-dimensional slices of $P$. Then there exists a positive integer $q$ such that $\LD_P^{\ell}:\N_{\geq q} \to \N$ agrees with a quasi-polynomial function of degree $d-\ell$.
\end{conjecture}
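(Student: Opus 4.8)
The plan is to run the three-step argument behind \Cref{thm:QP} one codimension at a time, replacing the elementary segment estimate of \Cref{lem:nvol} by Ehrhart theory for $\ell$-dimensional fibers. Adopt the convention that an \emph{$\ell$-slice} of a lattice polytope is $A\cap kP$ for an affine $\ell$-flat $A=\aff(\{\bx_0,\dots,\bx_\ell\})$ spanned by lattice points; such an $A$ is parallel to a unique rational linear $\ell$-subspace $V$, its \emph{direction}. For a rational $\ell$-subspace $V$ put $W:=V\cap\Z^d$, a primitive rank-$\ell$ sublattice, let $\pi_V\colon\R^d\to\R^d/V$ be the quotient (mapping $\Z^d$ onto $\Z^d/W\cong\Z^{d-\ell}$), and for $\bar c\in\Z^d/W$ set $F_{\bar c}(k):=\pi_V^{-1}(\bar c)\cap kP=k\big(\pi_V^{-1}(\tfrac1k\bar c)\cap P\big)$ and $g_{V,k}(\bar c):=|F_{\bar c}(k)\cap\Z^d|$. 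Then $\LD_P^{\ell}(k)$ equals the number of pairs $(V,\bar c)$ for which $g_{V,k}(\bar c)$ realizes the global maximum $\mu(k):=\max_{V}\max_{\bar c}g_{V,k}(\bar c)$.

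\emph{Step 1: the relevant directions stabilize.} Let $M(V):=\big(\max\{\vol_\ell(A\cap P)\st A\ \text{parallel to}\ V\}\big)/\det W$. Since each $F_{\bar c}(k)$ is a dilate by $k$ of a rational $\ell$-polytope which is a fiber of $P$, Ehrhart's theorem for rational polytopes gives $\max_{\bar c}g_{V,k}(\bar c)=M(V)\,k^{\ell}+O(k^{\ell-1})$, with the error uniform. The numerator of $M(V)$ is bounded independently of $V$ while $\det W\to\infty$ as $W$ ranges over primitive rank-$\ell$ sublattices of unbounded covolume, of which only finitely many have covolume below any given bound; hence $\sup_V M(V)$ is attained and $\Dcal:=\{V\st M(V)=\sup_{V'}M(V')\}$ is finite -- the higher-dimensional analogue of \Cref{lemma:same-dir} together with the bound $|\Dcal|\le 4$ invoked in the proof of \Cref{thm:QP}. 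For all large $k$, only directions in $\Dcal$ can carry a fiber attaining $\mu(k)$, so it suffices to analyse, for each $V\in\Dcal$ separately, the number of fibers attaining $\mu(k)$, retaining only those $V$ not beaten at lower order.

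\emph{Step 2: counting diameter fibers in a fixed direction.} Fix $V\in\Dcal$ and identify $\Z^d/W$ with $\Z^{d-\ell}$. By the parametric form of Ehrhart's theorem (vector partition functions; work of Barvinok, Clauss--Loechner, Verdoolaege), $(\bar c,k)\mapsto g_{V,k}(\bar c)$ is piecewise quasi-polynomial on a finite fan of rational chambers in $\R^{d-\ell}\times\R_{\ge0}$. By Brunn--Minkowski, $t\mapsto\vol_\ell\big(\pi_V^{-1}(t)\cap P\big)^{1/\ell}$ is concave on $\pi_V(P)$, so the set $B_V\subseteq\pi_V(P)$ on which the fiber volume is maximal is a rational polytope of some dimension $\delta_V\le d-\ell$, and over $kB_V$ the fibers of $kP$ are lattice translates of one fixed $\ell$-polytope; this plays the role of the ``chamber between two parallel edges'' of \Cref{lemma:LD-in-chambers}, a cylinder in place of a slab. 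For large $k$, every fiber attaining $\mu(k)$ projects into $kB_V$, and, by the piecewise-quasi-polynomial description, whether a given $\bar c\in\Z^{d-\ell}\cap kB_V$ attains it is decided by finitely many congruence conditions on $(\bar c,k)$. Thus, for each residue class of $k$, the number of such $\bar c$ is the number of lattice points of a fixed rational $\delta_V$-polytope in its $k$-th dilate, restricted to a fixed periodic subset -- a quasi-polynomial in $k$ of degree $\delta_V\le d-\ell$, exactly as $n_i\cdot\blocks_i(k)+r_i$ arises in \Cref{prop:QP-in-parallelotope}. Summing the contributions of the finitely many $V\in\Dcal$, as in the final display of the proof of \Cref{thm:QP}, shows that $\LD_P^{\ell}$ eventually agrees with a quasi-polynomial of degree at most $d-\ell$; the degree is exactly $d-\ell$ precisely when some $B_V$ is full-dimensional, e.g.\ for a prism, and in particular for the cube $[0,1]^d$.

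\emph{Main obstacle.} The whole difficulty sits in the place where \Cref{lem:nvol} no longer helps. For $\ell=1$ the lattice-point count of a segment is its normalized length rounded up or down, so comparing two slices compares two real numbers and the bookkeeping collapses to the explicit $q$-block count of \Cref{prop:QP-in-parallelotope}. For $\ell\ge2$ the count $g_{V,k}(\bar c)$ is a genuine Ehrhart quasi-polynomial of the fiber polytope, so a fiber of strictly smaller $\ell$-volume can still tie for the maximum lattice count, and pinning down the exact maximizer set is a parametric optimization of a piecewise quasi-polynomial -- not of a linear -- function. Making Step 2 rigorous therefore needs either the full machinery of parametric lattice-point counting, to certify that this maximizer set is semilinear with a quasi-polynomial counting function, or a sharpened convexity estimate bounding how far the lattice-point-maximizing fibers can drift from the volume-maximizing region $B_V$; producing such an argument cleanly is what is still missing, which is why the statement is only conjectured. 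A secondary difficulty lies already in Step 1: unlike in the plane (\Cref{lemma:vertex_edge_pair}), a maximum-count $\ell$-slice need not meet any low-dimensional face of $P$ (compare \Cref{ex:LD-3d-interior}), so the stabilization of $\Dcal$ must be extracted from the volume asymptotics above rather than from a face-incidence argument.
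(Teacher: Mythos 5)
This statement is stated in the paper only as a conjecture; the authors explicitly write that a proof would require new ideas, so there is no proof of record to compare against, and what you have written is a strategy outline rather than a proof. You say so yourself: the decisive step --- showing that the set of fibers attaining the maximal lattice-point count is cut out by finitely many congruence and polyhedral conditions on $(\bar c,k)$, so that its cardinality is a quasi-polynomial in $k$ --- is asserted via an appeal to parametric Ehrhart theory and then conceded to be ``still missing.'' That concession is exactly where the proof fails to exist. The chamber decompositions of Clauss--Loechner/Verdoolaege give you that $g_{V,k}(\bar c)$ is piecewise quasi-polynomial, but the \emph{argmax} of a piecewise quasi-polynomial over the lattice points of a parametric polytope is not automatically a set with a quasi-polynomial counting function: one must compare quasi-polynomial values across chambers and across the growing range of $\bar c$, and, as you note, a fiber of strictly smaller $\ell$-volume can tie for the maximal count. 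For $\ell=1$ the paper escapes this because \Cref{lem:nvol} pins the count to within one of the normalized length, which is linear in the parameters; no analogue is available for $\ell\ge 2$, and nothing in your Step~2 replaces it.

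Two further points are inaccurate or incomplete as written. First, the fibers of $kP$ over lattice points of $kB_V$ are translates of a fixed $\ell$-polytope by vectors that project to integral classes but need not lie in $\Z^d$, so they are \emph{not} ``lattice translates''; their lattice-point counts genuinely differ, and controlling how is again the missing congruence analysis, not a consequence of Brunn--Minkowski. Second, in Step~1 the fibers $\pi_V^{-1}(\bar c/k)\cap P$ are rational polytopes of unbounded denominator as $\bar c$ and $k$ vary, so ``Ehrhart's theorem with uniform error'' is not a citation but a claim needing a uniform Davenport-type estimate; moreover, when several $V\in\Dcal$ share the leading coefficient $M(V)$, deciding which direction wins for a given large $k$ is itself a comparison of two functions whose quasi-polynomiality is what you are trying to prove. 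Your identification of the obstacles (no vertex-incidence for optimal slices, cf.\ \Cref{ex:LD-3d-interior}, and the breakdown of \Cref{lem:nvol}) matches the authors' own discussion, and the reduction to fixed directions and volume-maximal cylinders is a reasonable skeleton for a future proof, but as submitted this is a research plan with a genuine gap at its central step, not a proof.
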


A proof would require new ideas. For example, in many parts of our proof we rely on the fact that for every diameter direction, lattice diameter segments exist that pass through vertices. As shown in \Cref{ex:LD-3d-interior} this does not hold when $d \geq 3$. Furthermore, the relation between the number of lattice points on an $\ell$-slice and its normalized volume, as described in \Cref{lem:nvol}, is more subtle in higher dimensions.
\section{Diameter directions and the discrete Borsuk partition problem} \label{sec:directions-Borsuk}

We begin with \Cref{thm:diam-dir} which bounds the number of diameter directions of a lattice polytope. Thereafter, we expand our study of lattice diameters to bounded sets $S \subset \Z^d$. Note that if $S = \conv(S) \cap \Z^d$, then $\ldiam(S)=\ldiam(\conv (S))$, where $\conv (S)$ is a lattice polytope, but our results hold for any discrete set of lattice points $S$. In \Cref{Lemma:MaxDegree}, we prove an upper bound on the number of lattice diameter segments that end at a given point in $S$. Using this, we prove \Cref{Thm:Borsuk}, the discrete Borsuk partition problem. A lemma which will be of much use in this section, from \cite{rabi89}, is the following:

\begin{lemma}[Rabinowitz, 1989]\label{lemma:Rabinowitz}
    Let $S \subseteq \Z^d$ be a bounded set and let $m \in \N$ satisfy $|S| > m^d$. Then $\conv (S)$ has at least $m+1$ collinear points. In other words, if $\ldiam(S)<m$, then $|S| \leq  m^d$.
\end{lemma}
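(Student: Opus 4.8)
The plan is to prove the equivalent contrapositive statement, ``$|S| > m^d \implies \ldiam(S) \ge m$'', by a single pigeonhole argument on residue classes modulo $m$; the first phrasing about collinear points in $\conv(S)$ then falls out of the same construction. I would assume throughout that $m \ge 1$, the cases $m = 0$ being vacuous.

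First I would consider the coordinatewise reduction map $\rho \colon \Z^d \to (\Z/m\Z)^d$. Since $S$ is a bounded set of lattice points, it is finite, and the hypothesis $|S| > m^d = |(\Z/m\Z)^d|$ forces $\rho$ to identify two distinct points $\bx, \by \in S$; that is, $m \mid (y_i - x_i)$ for every $i \in [d]$. Writing $y_i - x_i = m k_i$ with $k_i \in \Z$ not all zero, the standard fact that the segment $[\bx,\by]$ contains exactly $\gcd(y_1 - x_1, \dots, y_d - x_d) + 1$ lattice points gives $|[\bx,\by] \cap \Z^d| = m\cdot\gcd(k_1,\dots,k_d) + 1 \ge m+1$. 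All of these lattice points lie on the line $\aff(\{\bx,\by\})$, and since $\bx, \by \in S \subseteq \conv(S)$ they all lie in $\conv(S)$; hence $\conv(S)$ contains at least $m+1$ collinear lattice points. Taking $\bx,\by$ as the endpoints in Definition~\ref{def:ld-non-convex} yields $\ldiam(S) \ge m$, which is the contrapositive of ``$\ldiam(S) < m \implies |S| \le m^d$''.

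I do not expect a genuine obstacle in this argument; it is essentially the pigeonhole principle. The two points worth stating carefully are the elementary identity relating $|[\bx,\by]\cap\Z^d|$ to the gcd of the difference vector, and the remark that the $m-1$ interior lattice points produced on $[\bx,\by]$ are guaranteed to lie in $\conv(S)$ but need not belong to $S$ itself --- which is precisely why the conclusion is phrased in terms of $\conv(S)$ rather than $S$.
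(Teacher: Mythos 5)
Your argument is correct. Note, however, that the paper does not prove this lemma at all---it is quoted as an external result of Rabinowitz \cite{rabi89}---so there is no in-paper proof to compare against. Your pigeonhole argument on the residue map $\Z^d \to (\Z/m\Z)^d$, combined with the fact that a lattice segment $[\bx,\by]$ contains $\gcd(y_1-x_1,\dots,y_d-x_d)+1$ lattice points, is the standard proof of this statement and is complete: the two congruent points force the gcd to be a positive multiple of $m$, the resulting $m+1$ collinear lattice points lie in $\conv(S)$ (though not necessarily in $S$, as you rightly emphasize), and taking $\bx,\by$ as endpoints in Definition~\ref{def:ld-non-convex} gives $\ldiam(S)\ge m$, which is exactly the contrapositive of the second formulation.
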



\subsection{Diameter directions of a lattice polytope}

Alarcon \cite[Theorem~4.1]{Alarcon_PhD} showed that the number of diameter directions of a lattice polygon $P$ is at most six if $\ldiam(P)=1$ and at most four if $\ldiam(P)>1$; see \Cref{fig:diam-dir} for an example.

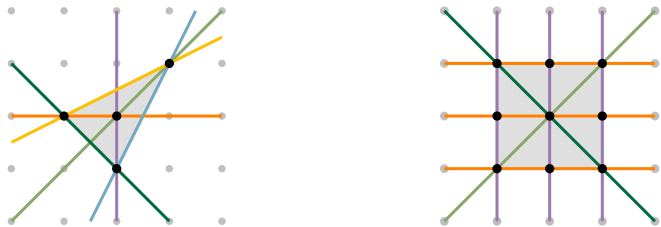
\begin{figure}[ht!]
  \centering

\begin{minipage}{0.48\textwidth}
  \centering
  \begin{tikzpicture}[thick, scale=0.7]
    \foreach \x in {-1,...,3} {
      \foreach \y in {-1,...,3} {
        \node at (\x,\y) [circle, draw=lightgray, fill=lightgray, minimum width=2pt] {};
      }
    }
    \draw[line width=0.3mm, gray, fill=lightgray, opacity=.5] (0,1) -- (1,0) -- (2,2) -- cycle;

    \draw[line width=0.4mm, orange] (-1,1) -- (3,1);                 
    \draw[line width=0.4mm, lilac!50!darklav] (1,-1) -- (1,3);        
    \draw[line width=0.4mm, lightgreen] (-1,-1) -- (3,3);             
    \draw[line width=0.4mm, stronggreen] (2,-1) -- (-1,2);            
    \draw[line width=0.4mm, darkyellow] (-1,0.5) -- (3,2.5);          
    \draw[line width=0.4mm, stoneblue] (0.5,-1) -- (2.5,3);           

    \foreach \p in {(1,0),(0,1),(2,2),(1,1)}{
      \node at \p [circle, fill=black, minimum width=3.5pt] {};
    }
  \end{tikzpicture}
    \hspace{-8em}
    \end{minipage} 
    \hfill
\begin{minipage}{0.48\textwidth}
    \hspace{-8em}
  \centering
  \begin{tikzpicture}[thick, scale=0.7]
    \foreach \x in {-1,...,3} {
      \foreach \y in {-1,...,3} {
        \node at (\x,\y) [circle, draw=lightgray, fill=lightgray, minimum width=2.5pt] {};
      }
    }
    \draw[line width=0.3mm, gray, fill=lightgray, opacity=0.5] (0,0) -- (2,0) -- (2,2) -- (0,2) -- cycle;

    \draw[line width=0.4mm, orange] (-1,0) -- (3,0);
    \draw[line width=0.4mm, orange] (-1,1) -- (3,1);
    \draw[line width=0.4mm, orange] (-1,2) -- (3,2);
    \draw[line width=0.4mm, lilac!50!darklav] (0,-1) -- (0,3);
    \draw[line width=0.4mm, lilac!50!darklav] (1,-1) -- (1,3);
    \draw[line width=0.4mm, lilac!50!darklav] (2,-1) -- (2,3);
    \draw[line width=0.4mm, lightgreen] (-1,-1) -- (3,3);
    \draw[line width=0.4mm, stronggreen] (3,-1) -- (-1,3);

    \foreach \p in {(2,0),(0,2),(2,2),(0,0),(1,0),(0,1),(1,1),(2,1),(1,2)}{
      \node at \p [circle, fill=black, minimum width=3.5pt] {};
    }
  \end{tikzpicture}
\end{minipage}
  \caption{Lattice diameter lines colored by their diameter direction.}
  \label{fig:diam-dir}
\end{figure}

We show an analogous statement to the first part, when $\ldiam(P)=1$, in higher dimensions. Our computational experiments suggest that an even smaller bound holds when $\ldiam(P)>1$, but this remains an open problem.

Define the direction $\dir(\bx,\by)$ between two points $\bx,\by \in \R^d$ as $\by-\bx$ or any of its non-zero multiples. For $X \subseteq \R^d$ (usually discrete) define the set of its directions as
\begin{align*}
    \dir(X):= \lb \dir(\bx_1,\bx_2): \bx_1,\bx_2 \in X, \ \bx_1 \neq \bx_2  \rb.
\end{align*}

\begin{lemma}\label{lemma:hyp-dir}
    Let $X,Y \subseteq \R^d$ be sets contained in parallel hyperplanes $H(\ba,\alpha)$ and $ H(\ba,\beta)$, respectively. Suppose that $\dir(X) \cap \dir(Y) = \emptyset$. Then all $\dir(\bx,\by)$ where $\bx \in X$ and $\by \in Y$ are \mbox{pairwise distinct}. 
\end{lemma}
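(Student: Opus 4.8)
The plan is to prove this at the level of a single coincidence: I assume there are pairs $(\bx_1,\by_1)\neq(\bx_2,\by_2)$ with $\bx_i\in X$ and $\by_i\in Y$ whose directions agree, $\dir(\bx_1,\by_1)=\dir(\bx_2,\by_2)$, and from this I produce a common element of $\dir(X)$ and $\dir(Y)$, contradicting the hypothesis $\dir(X)\cap\dir(Y)=\emptyset$. That contradiction is exactly the statement that all the cross-directions are pairwise distinct.

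The one structural fact I would exploit is that the two hyperplanes, though parallel, are distinct, so $\alpha\neq\beta$. Hence for every $\bx\in X$ and $\by\in Y$ the ``cross'' vector satisfies $\langle\ba,\by-\bx\rangle=\beta-\alpha\neq 0$. Now the assumed coincidence means $\by_1-\bx_1=\lambda(\by_2-\bx_2)$ for some scalar $\lambda\neq 0$; taking the inner product of both sides with $\ba$ gives $\beta-\alpha=\lambda(\beta-\alpha)$, and since $\beta-\alpha\neq 0$ this forces $\lambda=1$. Therefore $\by_1-\bx_1=\by_2-\bx_2$, equivalently $\bx_1-\bx_2=\by_1-\by_2$.

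To finish, I would note that this last equality makes $\bx_1=\bx_2$ equivalent to $\by_1=\by_2$; since the pairs are distinct, both the $X$-points differ and the $Y$-points differ, so the common vector $\bw:=\bx_1-\bx_2=\by_1-\by_2$ is nonzero. But then $\bw$ is a nonzero multiple of $\bx_1-\bx_2$ and simultaneously of $\by_1-\by_2$, so $\dir(\bx_1,\bx_2)=\dir(\by_1,\by_2)$ lies in $\dir(X)\cap\dir(Y)$ — the desired contradiction.

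There is no serious obstacle: the entire argument hinges on the projection step that pins down $\lambda=1$, which is precisely the place where distinctness of the two parallel hyperplanes is used, and the rest is bookkeeping with the definition of $\dir$. I would add a remark that this hypothesis is genuinely needed: if $\alpha=\beta$ (a single hyperplane) the conclusion can fail — e.g. take $\bx_1=\bx_2$ a common point and $\by_1,\by_2$ collinear with it — so the lemma must be read with $H(\ba,\alpha)$ and $H(\ba,\beta)$ truly distinct, which is the case in the intended application.
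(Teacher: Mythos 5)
Your proof is correct and follows essentially the same route as the paper's: assume two distinct pairs share a direction, pair the cross-vector with $\ba$ to force the scalar $\lambda=1$ using $\alpha\neq\beta$, and conclude $\bx_1-\bx_2=\by_1-\by_2$ is a nonzero common element of $\dir(X)\cap\dir(Y)$. Your added checks (that the common difference vector is nonzero, and the remark that distinctness of the hyperplanes is genuinely needed) are slightly more careful than the paper's write-up but do not change the argument.
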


\begin{proof}
    Suppose there exist two distinct pairs $(\bx_1,\by_1), (\bx_2,\by_2) \in X \times Y$ with $\dir(\bx_1,\by_1)=\dir(\bx_2,\by_2)$, that is, $\bx_1-\by_1=\lambda(\bx_2-\by_2)$ for some non-zero $\lambda \in \R$. Then, since $\bx_i \in X \subseteq H(\ba,\alpha)$ and $\by_i \in Y \subseteq H(\ba,\beta)$ we have
    \begin{align*}
        0 \neq \alpha - \beta = \langle a, \bx_1 - \by_1 \rangle = \langle a, \lambda(\bx_2 - \by_2) \rangle 
        = \lambda \langle \ba, \bx_2 - 
        \by_2 \rangle 
        =\lambda (\alpha - \beta)
    \end{align*}
    and thus $\lambda =1$. It follows that $\bx_1-\bx_2 =\by_1-\by_2 \in \dir(X) \cap \dir(Y)$, a contradiction.
\end{proof}

\begin{theorem}\label{thm:diam-dir}
    Let $P$ be a lattice $d$-polytope, with $\ldiam(P)=1$. Then $P$ has at most $\binom{2^d}{2}$ diameter directions and this bound is best possible.
\end{theorem}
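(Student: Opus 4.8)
The plan is to prove the upper bound and then construct a matching example. For the upper bound, observe first that when $\ldiam(P)=1$ every pair of distinct points $\bx,\by\in P\cap\Z^d$ spans a lattice diameter segment: the segment $[\bx,\by]$ contains at least two lattice points and, since $\ldiam(P)=1$, at most two, hence exactly two, contributing $1=\ldiam(P)$. So the set of diameter directions of $P$ equals $\dir(P\cap\Z^d)$. Applying \Cref{lemma:Rabinowitz} with $m=2$ (legitimate since $\ldiam(P)=1<2$) gives $|P\cap\Z^d|\le 2^d$, and since the $\binom{|P\cap\Z^d|}{2}$ unordered pairs of lattice points of $P$ produce at most that many directions, $P$ has at most $\binom{2^d}{2}$ diameter directions.

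For sharpness I would build, by induction on $d$, a set $S_d\subset\Z^d$ with $|S_d|=2^d$, with $\conv(S_d)$ full-dimensional, with $\conv(S_d)\cap\Z^d=S_d$, and with the $\binom{2^d}{2}$ directions $\dir(\bx,\by)$ ($\bx\ne\by\in S_d$) pairwise distinct. Any such $S_d$ finishes the proof: $P:=\conv(S_d)$ is a lattice $d$-polytope whose only lattice points are the $2^d$ points of $S_d$, no three of which are collinear, so $\ldiam(P)=1$, and its diameter directions are the $\binom{2^d}{2}$ distinct directions of $S_d$. (The two demands ``$\conv(S_d)\cap\Z^d=S_d$'' and ``$|S_d|=2^d$'' are exactly what the upper-bound argument forces on any extremal configuration, so nothing is lost.) For $d=1$ take $S_1=\{0,1\}$. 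For $d=2$ take $S_2=\{(0,0),(1,0),(2,3),(1,1)\}$: its hull is the triangle $\conv\{(0,0),(1,0),(2,3)\}$ of area $3/2$ with exactly one interior lattice point, namely $(1,1)$ (e.g.\ by Pick's theorem), so $\conv(S_2)\cap\Z^2=S_2$, and its six directions $(1,0),(0,1),(1,1),(1,2),(1,3),(2,3)$ are pairwise non-parallel.

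For the inductive step ($d\ge 3$) I would first pass from $S_{d-1}$ to a second copy $S_{d-1}':=\phi(S_{d-1})$ for a suitable $\phi\in\GL(d-1,\Z)$ with $\dir(S_{d-1}')\cap\dir(S_{d-1})=\emptyset$; such a $\phi$ exists because, fixing a hyperbolic $\psi\in\GL(d-1,\Z)$ (e.g.\ the companion matrix of $x^{\,d-1}-x-1$), for each ordered pair of primitive direction vectors occurring in $S_{d-1}$ at most one power $\psi^k$ can send one spanned line onto the other, so all but finitely many $\phi=\psi^k$ work. Being a lattice automorphism, $\phi$ preserves cardinality, dimension, the property $\conv(\cdot)\cap\Z^{d-1}=(\cdot)$, and distinctness of directions, so $S_{d-1}'$ inherits all the inductive properties. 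Now set $S_d:=(S_{d-1}\times\{0\})\cup(S_{d-1}'\times\{1\})\subset\Z^d$. It has $2^d$ points and is full-dimensional; it satisfies $\conv(S_d)\cap\Z^d=S_d$ because a point of $\conv(S_d)$ of last coordinate $0$ (resp.\ $1$) is a convex combination of the generators on $\{x_d=0\}$ (resp.\ $\{x_d=1\}$) only, so the lattice points of $\conv(S_d)$ at heights $0$ and $1$ are precisely $\conv(S_{d-1})\cap\Z^{d-1}=S_{d-1}$ and $\conv(S_{d-1}')\cap\Z^{d-1}=S_{d-1}'$, and no other height occurs. Finally the directions of $S_d$ fall into three groups: those inside the bottom copy and those inside the top copy, all having $x_d$-coordinate $0$, pairwise distinct within each group by induction and disjoint between the two groups by the choice of $\phi$; and the $4^{d-1}$ ``mixed'' directions $q-p$ with $p$ in the bottom copy and $q$ in the top copy, each with $x_d$-coordinate $1$ (so distinct from the first two groups) and pairwise distinct by \Cref{lemma:hyp-dir} applied with $X=S_{d-1}\times\{0\}$, $Y=S_{d-1}'\times\{1\}$ in the parallel hyperplanes $\{x_d=0\}$, $\{x_d=1\}$, whose hypothesis $\dir(X)\cap\dir(Y)=\emptyset$ is exactly what $\phi$ guarantees. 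The identity $2\binom{2^{d-1}}{2}+4^{d-1}=\binom{2^d}{2}$ shows every direction is accounted for, completing the induction.

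The only genuinely non-routine point is the first move of the inductive step: manufacturing a second copy of $S_{d-1}$ whose direction set is disjoint from that of the first while keeping it hollow and in general position. This is the crux, and the power-of-a-hyperbolic-matrix argument resolves it; everything else — the slab hollowness bookkeeping, the three-group split, and the binomial identity — is mechanical, with \Cref{lemma:hyp-dir} supplying the one combinatorial fact (that the mixed directions cannot collide) on which the count rests.
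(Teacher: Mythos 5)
Your upper bound is the paper's argument verbatim (Rabinowitz's bound $|P\cap\Z^d|\le 2^d$ plus counting pairs), and your overall strategy for sharpness --- stack two directionally-disjoint copies in parallel hyperplanes and invoke \Cref{lemma:hyp-dir} to keep the mixed directions distinct --- is also the paper's. The genuine difference is in how the two copies with disjoint direction sets are manufactured at each level. The paper pre-fabricates $2^{d-2}$ planar triangles $T_{i,3}$ whose slope sets lie in the pairwise disjoint intervals $[i,i+\tfrac12]$, and runs an induction that carries a whole family of $2^{d-k}$ polytopes with pairwise disjoint direction sets, consuming them two at a time; disjointness is thus guaranteed once and for all at the base level by an explicit, elementary slope computation. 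You instead carry a single extremal set $S_{d-1}$ and clone it on the fly via a unimodular map $\phi=\psi^k$ for a hyperbolic $\psi\in\GL(d-1,\Z)$. This is more economical (one object instead of a family) but imports the only non-elementary ingredient of your proof: the assertion that at most one power $\psi^k$ maps a given rational line onto another needs justification --- it reduces to showing no nontrivial power of $\psi$ fixes a rational line, which for the companion matrix of $x^{d-1}-x-1$ requires that this polynomial is irreducible over $\Q$ (Selmer's theorem) and that its roots are not roots of unity (true, since it has a real root in $(1,2)$ and irreducibility makes all roots conjugate). With that sentence added your argument is complete; the paper's route buys the same conclusion with nothing beyond Pick's theorem and an interval-disjointness observation. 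Everything else in your write-up (the hollowness of the slab, the three-group split of directions, the count $2\binom{2^{d-1}}{2}+4^{d-1}=\binom{2^d}{2}$, and the $d=2$ base case) checks out.
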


\begin{proof}
Let $d \in \N$. For $d=1$ the claim is clear, so assume $d \geq 2$. Since $\ldiam(P)<2$, by \ref{lemma:Rabinowitz} $|P \cap \Z^d| \leq 2^d$. A lattice diameter line is determined by two lattice points in $P$, so the number of possible diameter directions is bounded above by $\binom{2^d}{2}$.

Next, we give a construction of a lattice $d$-polytope attaining this bound. For $t \in \N$ and $x \in \N_{\geq 3}$, let\vspace{-5pt}
    \begin{align*}
        \bv_1=(1,t), \ \bv_2=(x, tx+1), \ \bv_3=(-x-1, -tx-t-1)
    \end{align*}
    and let $T_{t,x} = \conv (\{\bv_1,\bv_2,\bv_3\})$.
    Then $(0,0) = \frac{1}{3}\bv_1+ \frac{1}{3}\bv_2+ \frac{1}{3}\bv_3 \in T_{t,x}$ and since  $T_{t,x}$ has area $\frac{3}{2}$, Pick's Theorem \cite{pick99} implies that $T_{t,x}$ contains no lattice points besides $(0,0)$. Moreover, no three points in $T_{t,x}$ are collinear, thus $\ldiam(T_{t,x})=1$. The slopes of the six diameter lines in $T_{t,x}$ are
    \begin{align*}
    t, \ t + \dfrac{1}{2x + 1}, \ t + \dfrac{1}{x + 2}, \ t + \dfrac{1}{x + 1}, \ t + \dfrac{1}{x}, \text{ and }\ t + \dfrac{1}{x - 1},
    \end{align*}
    which are all distinct and contained in $[t,t+\frac{1}{2}]$.

    \emph{Induction Claim:} For $k \in \{2, \ldots, d\}$ there exist $2^{d-k}$ lattice $k$-polytopes $P_1, \ldots, P_{2^{d-k}}$ such that $|P_i \cap \Z^k| = 2^k$, $P_i$ has $\binom{2^k}{2}$ diameter directions, and $\dir(P_i) \cap \dir(P_j)  = \emptyset$ for all $i,j \in [2^{d-k}], \ i \neq j$.
    
    \emph{Base Case:} For $k=2$ consider $\{ T_{i,3} \st i=1, \ldots, 2^{d-2} \}$. As we showed above $T_{i,3}$ contains $4=2^2$ lattice points, $\ldiam(T_{i,3})=2$ and all lines through two different lattice points have a different diameter direction, thus $T_{i,3}$ has $\binom{4}{2}=6$ diameter directions. Furthermore $\dir(T_{i,3}) \subseteq [i,i+1/2]$, thus the diameter directions of all $T_{i,3}$ for $i =1, \ldots, 2^{d-2}$  are distinct. 

    \emph{Induction Hypothesis:} For $k \in \{2, \ldots, d-1\}$ let $P_1, \ldots, P_{2^{d-k}}$ be lattice $k$-polytopes such that $|P_i \cap \Z^k| = 2^k$, $P_i$ has $\binom{2^k}{2}$ diameter directions, and $\dir(P_i) \cap \dir(P_j)  = \emptyset$ for all $i,j \in [2^{d-k}], \ i \neq j$. In particular, any two pairs of lattice points in $P_i$ define lattice diameter lines \mbox{with distinct slopes.}

    \emph{Induction Step:} Let $P_i$ for $i \in [2^{d-k}]$ be as given in the induction hypothesis. Let $\underline{P_i}:= P_{2i-1} \times \{0\}$ and $\overline{P_{2i}}:= P_{2i} \times \{1\}$ and construct polytopes
    \begin{align*}
        Q_i:= \conv \lp \lb \underline{P_i},\overline{P_{2i}}  \rb \rp\subseteq \R^{k+1}.
    \end{align*}
    Since $\underline{P_i}$ and $\overline{P_{2i}}$ are contained in parallel hyperplanes
    \begin{align*}
        |Q_i \cap \Z^{k+1}| = |\underline{P_i} \cap \Z^{k+1}| +|\overline{P_{2i}} \cap \Z^{k+1}| = 2^{k+1}, 
    \end{align*}
    and $\ldiam(Q_i)=2$. By \Cref{lemma:hyp-dir} all $\dir(\bx,\by)$ for any $\bx \in \underline{P_i}$ and $\by \in \overline{P_{2i}}$ are distinct so any two points in $Q_i$ define different lattice diameter lines with distinct diameter directions, i.e., $Q_i$ has $\binom{2^{k+1}}{2}$ diameter directions. Furthermore, again by \Cref{lemma:hyp-dir}, any two diameter directions in $Q_i$ and $Q_j$ are distinct.
    This proves the induction claim. 
\end{proof}

\begin{example}
    We give an example of our recursive construction using the same idea as in the proof of \mbox{\Cref{thm:diam-dir}}.
    Here $T_0=\conv(\{(0,-3), (1,-4), (5,-5)\})$ and $T_1 = \conv(\{ (3,-3),(4,-2), (2,-7)\})$ each have $6=\binom{2^2}{2}$ distinct diameter directions and thus $P = \conv( T_0 \times \{0\}, T_1 \times \{1\})$ has $\binom{2^3}{2}$ diameter directions. See \Cref{fig:construction-diam-dir}.
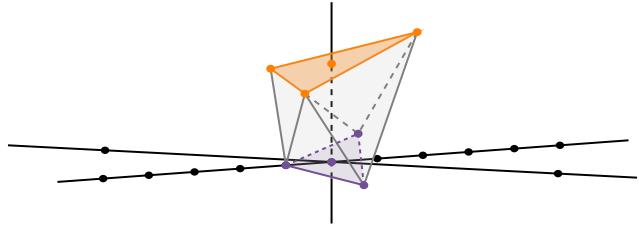
\begin{figure}[ht!]
    \centering
    \tikzset{every picture/.style={line width=0.75pt}} 

\begin{tikzpicture}[x=0.9pt, y=0.9pt, yscale=-.7, xscale=.8, transform shape=false]
\begin{scope}
    \clip (0,74) rectangle (350,210);




\draw[color=black] (20,161) -- (360,181);

\draw[color=black] (46,183) -- (346,158);

\draw[color=pink, line width=5pt] (190,60) -- (190,70);
\draw[color=black] (190,75) -- (190,113);
\draw[color=black, dash pattern={on 2.25pt off 2.25pt}] (190,113) -- (190,181);
\draw[color=black] (190,181) -- (190,208);

\foreach \k in {-10,-8,-6,-4,-2,0,2,4,6,8,10} {
  \fill[black] ({190+12*\k}, {171-\k}) circle (2pt);
}

\foreach \k in {-7,7} {
  \fill[black] ({190+17*\k}, {171+\k}) circle (2pt);
}

\draw [color=orange, draw opacity=1, fill=orange, fill opacity=0.34]
  (235,93) -- (176,130) -- (158,115) -- cycle;

\draw [draw opacity=0, fill=lightgray, fill opacity=0.17]
  (235,93) -- (207,185) -- (166,173) -- (158,115) -- cycle;

\draw[color=gray, dash pattern={on 2.25pt off 2.25pt}] (176,130) -- (204,154);
\draw[color=gray, dash pattern={on 2.25pt off 2.25pt}] (235,93) -- (204,154);

\draw[color=gray] (158,115) -- (166,173);
\draw[color=gray] (176,130) -- (166,173);
\draw[color=gray] (176,130) -- (207,185);
\draw[color=gray] (235,93) -- (207,185);

\draw [draw opacity=0, fill=darklav, fill opacity=0.11]
  (204,154) -- (207,185) -- (166,173) -- cycle;
\draw [color=darklav] (166,173) -- (207,185); 
\draw [color=darklav, dash pattern={on 1.5pt off 1.5pt}] (204,154) -- (207,185); 
\draw [color=darklav, dash pattern={on 1.5pt off 1.5pt}] (166,173) -- (204,154);

\draw[color=darklav, fill=darklav] (190,171) circle (1.6pt); 
\draw[color=darklav, fill=darklav] (207,185) circle (1.6pt); 
\draw[color=darklav, fill=darklav] (204,154) circle (1.6pt); 
\draw[color=darklav, fill=darklav] (166,173) circle (1.6pt); 

\draw[color=orange, fill=orange] (190,112) circle (1.6pt); 
\draw[color=orange, fill=orange] (158,115) circle (1.6pt); 
\draw[color=orange, fill=orange] (176,130) circle (1.6pt); 
\draw[color=orange, fill=orange] (235,93)  circle (1.6pt); 

\end{scope}
\end{tikzpicture}
    \caption{The convex hull of two lattice triangles with each $\binom{2^2}{2}$ diameter directions which yields a $3$-dimensional lattice polytope with $\binom{2^3}{2}$ distinct diameter directions.}
    \label{fig:construction-diam-dir}
\end{figure}
\end{example}

\subsection{The discrete Borsuk partition problem}\label{sec:2.4}

In this section we prove the discrete version of Borsuk's partition problem for lattice diameters. More generally, we study lattice diameters of bounded sets $S \subset \Z^d$. 
Notice that the function $f : \Z^d \times \Z^d \to \R$ defined by  
\[
f(\bx, \by) = \nvol([\bx, \by]) = |[\bx, \by] \cap \Z^d| - 1
\]  
    is a semi-metric, i.e., it is positive-definite and symmetric, so $(\Z^d, f)$ is a semi-metric space. Furthermore, $f$ satisfies $f(\lambda \bx, \lambda \by) = |\lambda| \cdot f(\bx,\by)$ for all $\bx,\by \in \Z^d$ and all $\lambda \in \Z$. From this viewpoint, \Cref{Thm:Borsuk} is closely related to the well-known Boltyansky-Gohberg conjecture \cite{boltjansky1985results} which states that any bounded set $S$ in a $d$-dimensional normed space can be partitioned into at most $2^d$ subsets, each having strictly smaller diameter.

Recall that for a bounded set $S$ the \emph{lattice Borsuk number} of $S$, denoted $\beta_\Z(S)$, is the smallest number of subsets into which $S$ can be partitioned, such that each subset has strictly smaller lattice diameter than $\diam_{\Z} (S)$. In \mbox{\Cref{fig:Borsuk-sets}} we show a Borsuk partition of a set $S=P \cap \Z^d$, where $P$ is a lattice polygon, into parts $S_1$ and $S_2$. Necessarily $\ldiam(S_i)< \ldiam(P)$ for $i=1,2$, but 
note that $\ldiam (\conv (S_1)) = \ldiam(P)$.


\begin{figure}[ht!]
    \centering
        \begin{tikzpicture}[thick, scale=0.8]\vspace{0.5cm}
    
        \foreach \x in {-1,...,5} {
            \foreach \y in {-1,...,1} {
                \node at (\x,\y) [circle, draw=lightgray, fill= lightgray, minimum width=2pt] {};   
            }
        }
    
        \draw[line width=0.3mm, black] (0,-1) -- (0,1) -- (4,0) -- cycle;

         \foreach \x in {0,...,3} {
                \node at (\x,0) [circle, fill=darklav, minimum width=4pt] {};
            }
        \node at (0,-1) [circle, fill=orange, minimum width=4pt] {};
        \node at (0,1) [circle, fill=orange, minimum width=4pt] {};
        \node at (4,0) [circle, fill=orange, minimum width=4pt] {};

        \draw(2.5,0.75)node[black]{$P$};
        
        \draw(-0.5,0.2)node[darklav]{$S_2$};
        \draw(4.2,-0.5)node[orange]{$S_1$};

    \end{tikzpicture} 
    
    \caption{A Borsuk partition $P \cap \Z^d = S_1 \sqcup S_2$ where $\ldiam(S_1) < \ldiam(\conv(S_1))=\ldiam(P)$.}
    \label{fig:Borsuk-sets}
\end{figure}

Define the \emph{lattice Borsuk graph} of $S$, denoted $\BG_\Z(S)$, as the graph with vertex set $S$ and edges $\{\bx,\by\} \subseteq S$ if and only if $f(\bx,\by)=\diam_\Z(S)$.

\begin{remark}\label{Remark:equivalence}
    For a bounded set $S \subset \Z^d$,  $\beta_\Z(S)=\chi(\BG_\Z(S))$, where $\chi$ is the \textit{chromatic number} of $\BG_\Z(S)$, i.e., the minimum number of colors needed to color $S$ so that no two adjacent vertices in $S$ have the same color.
\end{remark}

To find a good bound on the lattice Borsuk number it will be useful to bound $\Delta(\BG_\Z(S))$, the maximum degree of the lattice Borsuk graph. This is precisely what the next lemma does.

\begin{lemma}\label{Lemma:MaxDegree}
    Let $S \subset \Z^d$ be a bounded set and let $\bx \in S$. Then, the number of lattice diameter segments in $S$ that have $\bx$ as an endpoint is at most $2^d - 1$. Therefore, $\Delta(\BG_\Z(S)) \leq 2^d-1$.
\end{lemma}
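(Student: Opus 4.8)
The plan is to fix the endpoint $\bx \in S$ and analyze all lattice diameter segments emanating from $\bx$ according to the ``parity pattern'' of their other endpoint. Suppose $[\bx,\by]$ and $[\bx,\bz]$ are two distinct lattice diameter segments of $S$; write $\ell := \ldiam(S)$. Because $[\bx,\by]$ contains exactly $\ell+1$ lattice points evenly spaced along it, we can write $\by = \bx + \ell \bu$ for the primitive direction vector $\bu \in \Z^d$; similarly $\bz = \bx + \ell \bv$ with $\bv$ primitive. The key observation is to look at $\by \bmod 2\Z^d$ and $\bz \bmod 2\Z^d$, or rather at the midpoint: the midpoint of $[\bx,\by]$ is $\bx + \tfrac{\ell}{2}\bu$, which is a lattice point precisely when $\ell$ is even. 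More robustly, I would consider, for each lattice diameter segment $[\bx,\by]$ from $\bx$, the residue class $(\by - \bx) \bmod 2$, equivalently $\by \bmod 2\Z^d$ since $\bx$ is fixed; this takes one of $2^d$ values. The claim will follow from showing that at most one lattice diameter segment from $\bx$ realizes each \emph{nonzero} residue class, and none realizes the zero class.

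First I would rule out the zero residue class: if $\by \equiv \bx \pmod{2\Z^d}$, then $\bm := \tfrac12(\bx+\by) \in \Z^d$, and $\bm$ lies on the segment $[\bx,\by]$, hence $\bm \in \conv(\{\bx,\by\}) \cap \Z^d$. But then $\bm \in \Z^d$, and although $\bm$ need not be in $S$, we don't immediately get a contradiction from membership — so instead the right move is: the lattice points of $[\bx,\by]$ are $\bx, \bx+\bu, \dots, \bx+\ell\bu$ where $\by-\bx = \ell \bu$ with $\bu$ primitive, and $\by - \bx \equiv \bx - \bx = 0 \pmod 2$ forces $\ell \bu \equiv 0$, i.e. $\ell$ is even (since $\bu$ is primitive, some coordinate is odd). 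Hmm — this shows the zero class corresponds exactly to $\ell$ even, which does occur. Let me instead argue the core pigeonhole directly without splitting off the zero class.

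The core step is: \emph{two distinct lattice diameter segments $[\bx,\by]$ and $[\bx,\bz]$ from $\bx$ cannot have $\by \equiv \bz \pmod{2\Z^d}$.} Suppose they do. Then $\bw := \tfrac12(\by+\bz) \in \Z^d$. I claim $[\bx,\bw]$ contains strictly more than $\ell+1$ lattice points — or rather, that some segment between two points of $S$ does, contradicting maximality of $\ell$. The cleanest route: consider the lattice points $\by$ and $\bz$ themselves; they lie in $S$, so $[\by,\bz] \cap \Z^d$ has at most $\ell+1$ points, meaning $\bz - \by = m\bt$ for primitive $\bt$ and $m \le \ell$. Now $\bw = \by + \tfrac{m}{2}\bt$, and since $\by \equiv \bz$, $m$ is even, so $\bw$ is a genuine lattice point of $[\by,\bz]$, the midpoint. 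Then examine $\bx, \bw$: by the triangle-type inequality for the semi-metric $f$ (stated just above the lemma: $f(\bx,\by) = \nvol([\bx,\by])$ is a semi-metric, so it satisfies the triangle inequality), and since $\bw$ is the midpoint of $[\by,\bz]$, we have $f(\bx,\bw) \le \tfrac12(f(\bx,\by)+f(\bx,\bz)) = \ell$ — this alone is not a contradiction. The real leverage must come from a \emph{strict} improvement: if $\by \ne \bz$ but $\by \equiv \bz \pmod 2$, then $\by, \bw, \bz$ are three distinct collinear lattice points of $S$ only if $\bw \in S$, which we cannot assume. So the honest approach is to produce a point of $S$, not of $\Z^d$, at lattice distance $> \ell$ from something in $S$. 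I expect the actual argument parametrizes the $2^d$ residue classes of the \emph{direction differences} and uses a parallelepiped/volume argument: map each lattice diameter segment $[\bx, \bx+\ell\bu_i]$ to $\bu_i \bmod 2$ among the $\ell$ interior-plus-endpoint lattice points; a collision $\bu_i \equiv \bu_j \pmod 2$ lets one ``fold'' the two segments to find a longer collinear chain through $\bx$ and one of the endpoints.

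\textbf{Main obstacle.} The delicate point is that $S$ is an arbitrary set of lattice points, not $\conv(S) \cap \Z^d$, so midpoints and interpolated lattice points need not lie in $S$; hence a naive pigeonhole on parities of endpoints does not immediately yield a forbidden longer segment \emph{with endpoints in $S$}. The proof must instead exploit that the two segments \emph{share} the endpoint $\bx \in S$: given $[\bx,\by], [\bx,\bz]$ with $\by \equiv \bz \pmod{2\Z^d}$, one should show the segment $[\by,\bz]$ (whose endpoints $\by,\bz$ \emph{are} in $S$) passes through its midpoint $\bm \in \Z^d$, and then that $\bm$ together with one of $\by,\bz$ and the geometry forced by $[\bx,\by],[\bx,\bz]$ being \emph{maximal} yields $\diam_\Z(S) \ge \ell+1$ via a segment with both endpoints among $\{\bx,\by,\bz\}$ — a contradiction — unless $\by = \bz$. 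Establishing that last implication cleanly, presumably by showing the three points $\bx, \by, \bz$ are forced to be collinear (so $\bx,\by,\bz$ lie on a common lattice line with $\bm$ strictly between, giving $\ge \ell+2$ lattice points between the extreme two of $\bx,\by,\bz$), is where the work lies; sharpness is then witnessed by $S = \{0,1\}^d$, whose $\mathbf 0$-vertex is joined to all $2^d - 1$ other vertices by lattice diameter segments of length $1$.
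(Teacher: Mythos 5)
Your proposal does not close the argument, and the ``core step'' you announce is in fact false as stated. Take $S=\{0,2\}^d$ and $\bx=\mathbf{0}$: here $\ldiam(S)=2$ and every one of the $2^d-1$ points $\by=2\epsilon$, $\epsilon\in\{0,1\}^d\setminus\{\mathbf 0\}$, is the far endpoint of a lattice diameter segment at $\bx$, yet all of these endpoints are congruent to $\bx$ modulo $2\Z^d$. So classifying the \emph{endpoints} $\by \bmod 2\Z^d$ cannot work; the object to classify is the primitive direction $\bu=(\by-\bx)/\ell \bmod 2\Z^d$, which you do gesture at in your last sentence but never exploit. Moreover, even with the right invariant, a parity collision $\bu_i\equiv\bu_j\pmod{2\Z^d}$ only tells you directly that $f(\by_i,\by_j)$ is even, i.e.\ $\geq 2$ --- which is no contradiction when $\ell\geq 2$. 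The missing idea is the rescaling step that the paper uses: pass to the normalized set $\{\mathbf 0\}\cup\{\bu_1,\dots,\bu_k\}$ of primitive direction vectors, observe that a collision gives three collinear lattice points among $\bu_i$, their midpoint, and $\bu_j$, hence $f(\bu_i,\bu_j)\geq 2$, and then scale back up by the homogeneity $f(\ell\bu_i,\ell\bu_j)=\ell\, f(\bu_i,\bu_j)\geq 2\ell>\ell$. Since $\by_i=\bx+\ell\bu_i$ and $\by_j=\bx+\ell\bu_j$ both lie in $S$, this contradicts $\ldiam(S)=\ell$. (The paper packages the mod-$2$ pigeonhole as \Cref{lemma:Rabinowitz}: the normalized set has lattice diameter $1$, hence at most $2^d$ elements, so $k\leq 2^d-1$.)

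Your ``main obstacle'' --- that midpoints and interpolated lattice points need not lie in $S$ --- is a red herring. By \Cref{def:ld-non-convex}, $\ldiam(S)$ counts \emph{all} lattice points on a segment whose two \emph{endpoints} lie in $S$; the contradiction segment $[\by_i,\by_j]$ has both endpoints in $S$, and it is irrelevant whether its interior lattice points belong to $S$. Once you replace the endpoint parities by direction parities and insert the scaling argument, the proof goes through and coincides with the paper's; your sharpness example $S=\{0,1\}^d$ is the correct one. As written, however, the proposal is an outline with its central implication unproved (and, in the form stated, refutable), so it cannot be accepted as a proof.
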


\begin{proof}
If $|S| <2$, then there is nothing to show, so suppose $|S|\geq 2$.  After an appropriate translation, we may assume that $\bx = \mathbf{0}$. Define
\[
S_{\mathbf{0}} := \{ \bs \in S : f(\mathbf{0}, \bs) = \operatorname{diam}_\Z(S) \}
\]
and consider the corresponding set of vectors
\[
S'_{\mathbf{0}} := \left\{ \frac{\bs}{\operatorname{diam}_\Z(S)} : \bs \in S_{\mathbf{0}} \right\}\cup \{\mathbf{0}\}.
\]

Suppose, for contradiction, that $|S_{\mathbf{0}}| > 2^d-1$, then $|S'_{\mathbf{0}}| > 2^d$, and by \ref{lemma:Rabinowitz}, we have $\operatorname{diam}_\Z(S'_{\mathbf{0}}) \geq 2$. Let $\by', \bz' \in S'_{\mathbf{0}}$ such that $f(\by', \bz') \geq 2$. Since all non-zero vectors in $S'_{\mathbf{0}}$ are primitive, neither $\by'$ nor $\bz'$ is $\mathbf{0}$. So for $\by=\ldiam(S)\cdot\by'$ and $\bz=\ldiam(S)\cdot \bz'$ we get
\[
f(\by, \bz) = \ldiam(S) \cdot f(\by', \bz') \geq \ldiam(S) \cdot 2,
\]
which is a contradiction. Thus $|S_\mathbf{0}| \leq 2^d-1$, and $\Delta(\BG_\Z(S)) \leq 2^d-1$ follows from \Cref{Remark:equivalence}.
\end{proof}

Now we state Brooks' Theorem, a graph theoretic result, which relates the maximum degree of a graph to its chromatic number. For the lattice Borsuk graph this translates to a bound on $\beta_\Z(S)$. Brooks' Theorem has many published proofs, see for example \cite[Theorem~8.4]{bondy2008graph}.

\begin{theorem}[Brooks, 1941]\label{Thm:Brooks}
    If $G$ is a connected simple graph, then $\chi(G) \leq \Delta(G)+1$, with equality if and only if $G$ is an odd cycle or a complete graph.
\end{theorem}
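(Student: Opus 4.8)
The plan is to prove the inequality and the equality characterization in turn. The bound $\chi(G)\le\Delta(G)+1$ is the standard greedy argument: list $V(G)$ in any order $v_1,\dots,v_n$ and give each $v_i$ the least color in $\{1,\dots,\Delta(G)+1\}$ avoided by its already-colored neighbors; since $v_i$ has at most $\Delta(G)$ neighbors, a color is always free. For the equality case, first note that $K_{\Delta+1}$ has chromatic number $\Delta+1$ and the odd cycles $C_{2k+1}$ have chromatic number $3=\Delta+1$, so these attain equality; it remains to show that a connected graph $G$ that is neither complete nor an odd cycle satisfies $\chi(G)\le\Delta$, where $\Delta:=\Delta(G)$.

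The cases $\Delta\le 2$ are immediate: if $\Delta\le 1$ then $G\in\{K_1,K_2\}$ is complete, and if $\Delta=2$ then $G$ is a path or a cycle, where paths and even cycles are $2$-colorable and odd cycles are excluded by hypothesis. So assume $\Delta\ge 3$. The engine of the argument is the following observation: if one orders $V(G)$ as a reverse breadth-first-search order rooted at a fixed vertex $v_n$ (vertices farther from $v_n$ come first), then every $v_i$ with $i<n$ has a neighbor of larger index, so when the greedy algorithm reaches $v_i$ at most $\Delta-1$ of its neighbors are colored and a color in $\{1,\dots,\Delta\}$ survives; the only vertex that can force a $(\Delta+1)$-st color is $v_n$, and it is safe as soon as the colors appearing on $N(v_n)$ number at most $\Delta-1$.

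From here I would split into cases. If $G$ is not regular, take $v_n$ of degree $<\Delta$: then $v_n$ also sees at most $\Delta-1$ colors, so greedy uses only $\Delta$ colors. If $G$ is $\Delta$-regular but has a cutvertex $v$, decompose $G$ along $v$ into two connected subgraphs $G_1,G_2$ (each being $v$ together with some of the components of $G-v$); in each $G_i$ the vertex $v$ has degree $<\Delta$, so $G_i$ is non-regular with maximum degree $\le\Delta$ and is $\Delta$-colorable by the previous case, and after permuting colors so that $v$ gets the same color in $G_1$ and $G_2$ the two colorings glue to a $\Delta$-coloring of $G$. The remaining case, $G$ being $\Delta$-regular, $2$-connected, and not complete, is the hard one: here I would locate three vertices $v,a,b$ with $a,b\in N(v)$, $ab\notin E(G)$, and $G-\{a,b\}$ connected; then, coloring $a$ and $b$ first with color $1$ (legitimate since $a\not\sim b$), then the vertices of $G-\{a,b\}$ in reverse-BFS order rooted at $v$, greedy uses at most $\Delta$ colors on everything before $v$, and finally $v$ is safe because two of its $\Delta$ neighbors, namely $a$ and $b$, share color $1$.

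The main obstacle is constructing the triple $v,a,b$ in the $2$-connected regular case. A vertex $v$ with two non-adjacent neighbors $a,b$ always exists because $G$ is connected and not complete (take three consecutive vertices on a shortest path between a non-adjacent pair). If $G$ is $3$-connected, $G-\{a,b\}$ is automatically connected and we are done. If $G$ is only $2$-connected, I would fix a $2$-cut $\{x,y\}$, set $v=x$, and pick $a$ and $b$ to be neighbors of $x$ in two distinct components of $G-\{x,y\}$ — such neighbors exist in every component, otherwise $y$ alone would be a cutvertex — so that $a\not\sim b$; then one must check, possibly after refining the choice of $a$ within its component, that removing $a$ and $b$ leaves $G$ connected, using that each component of $G-\{x,y\}$ remains attached to $\{x,y\}$ and that $x$ still reaches its surviving neighbors. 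Combining this triple-finding lemma with the greedy scheme above, the $\Delta\le 2$ discussion, and the fact that $K_{\Delta+1}$ and odd cycles realize equality, yields the theorem.
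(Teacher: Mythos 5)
The paper does not prove this statement at all: Brooks' Theorem is imported as a classical black box, with a citation to Bondy--Murty in place of a proof, so there is no internal argument to compare yours against. On its own merits, your proposal is the standard Lov\'asz-style proof (greedy bound, reduction to the connected $\Delta$-regular $2$-connected case, and the triple $v,a,b$ with $a,b\in N(v)$, $ab\notin E(G)$, $G-\{a,b\}$ connected, colored first with a repeated color), and the reductions you give --- the $\Delta\le 2$ cases, the non-regular case via a low-degree root, the cut-vertex case via gluing colorings of the pieces, and the $3$-connected case --- are all correct.

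The one genuine gap is exactly where you flag it: in the $2$-connected but not $3$-connected regular case, your construction (take a $2$-cut $\{x,y\}$, set $v=x$, and let $a,b$ be neighbors of $x$ in two distinct components of $G-\{x,y\}$) does not by itself guarantee that $G-\{a,b\}$ is connected. If $a$ happens to be a cut vertex of its component, its removal can strand a piece that was attached to the rest only through $a$ and, say, $x$, and when $x\not\sim y$ and there are only two components one also has to worry about $x$ and $y$ ending up separated. The ``refinement'' you defer is the actual content of this case: the standard repair is to consider the block decomposition of $G-v$, observe that each end-block contains a neighbor of $v$ that is not a cut vertex of $G-v$ (else $G$ would have a cut vertex), and take $a,b$ to be such non-cut vertices in two distinct end-blocks; then $G-v-a-b$ stays connected, and $v$ retains a neighbor because $\Delta\ge 3$. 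With that lemma supplied, your argument closes and gives a complete, self-contained proof of the statement the paper only cites.
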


Finally, we present a solution to the discrete version of Borsuk's partition problem.

\begin{theorem}\label{Thm:Borsuk}
     Let $S \subset \Z^d$ be a bounded set. Then $\beta_\Z(S) \leq 2^d$ and this bound is best possible.
\end{theorem}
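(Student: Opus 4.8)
The plan is to deduce the upper bound directly from the machinery already assembled, and then to certify sharpness with the cube. By \Cref{Remark:equivalence} we have $\beta_\Z(S)=\chi(\BG_\Z(S))$, so it suffices to bound the chromatic number of the lattice Borsuk graph. By \Cref{Lemma:MaxDegree}, $\Delta(\BG_\Z(S))\leq 2^d-1$. The graph $\BG_\Z(S)$ need not be connected, but $\chi$ of a graph equals the maximum of the chromatic numbers of its connected components, and the maximum degree of each component is at most $\Delta(\BG_\Z(S))$. Applying Brooks' theorem (\Cref{Thm:Brooks}) to each component $C$ gives $\chi(C)\leq\Delta(C)+1\leq 2^d$, hence $\beta_\Z(S)=\chi(\BG_\Z(S))\leq 2^d$. (Brooks' theorem actually yields slightly more: for $d\geq 2$ a component $C$ can satisfy $\chi(C)=2^d$ only if $C\cong K_{2^d}$, since the only other extremal configuration, an odd cycle, has chromatic number $3\leq 2^d-1$; for $d=1$ the graph is a matching and $\chi\leq 2=2^d$.)

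For sharpness I would take $S=\{0,1\}^d$, the vertex set of the unit cube, which equals $[0,1]^d\cap\Z^d$ so that the example also lies in the polytopal case mentioned after the theorem. First I claim $\ldiam(\{0,1\}^d)=1$, equivalently that no three points of $\{0,1\}^d$ are collinear: if $\bz=\lambda\bx+(1-\lambda)\by$ with $\bx,\by\in\{0,1\}^d$ and $0<\lambda<1$, then in any coordinate $i$ with $x_i\neq y_i$ (so $\{x_i,y_i\}=\{0,1\}$) we get $z_i\in\{\lambda,1-\lambda\}\subset(0,1)$, so $\bz\notin\Z^d$ unless $\bx=\by$. (Equivalently, the difference $\by-\bx$ has entries in $\{-1,0,1\}$, not all zero, hence $\gcd$ equal to $1$, so $|[\bx,\by]\cap\Z^d|=2$.) Consequently every pair of distinct vertices of the cube realizes the lattice diameter, i.e.\ $\BG_\Z(\{0,1\}^d)=K_{2^d}$, and therefore $\beta_\Z(\{0,1\}^d)=\chi(K_{2^d})=2^d$.

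The substance of the theorem is carried entirely by \Cref{Lemma:MaxDegree}, which is already proved; given it, the remaining argument is short. The only points that require a little care are the passage to connected components, so that Brooks' theorem is legitimately applicable, and the verification that the cube attains the bound, and I do not anticipate any further obstacle.
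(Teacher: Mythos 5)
Your proof is correct and follows essentially the same route as the paper: reduce to the chromatic number of the lattice Borsuk graph via \Cref{Remark:equivalence}, bound its maximum degree by $2^d-1$ via \Cref{Lemma:MaxDegree}, apply Brooks' theorem componentwise, and exhibit $S=\{0,1\}^d$ for sharpness. The only difference is that you spell out the verification that $\BG_\Z(\{0,1\}^d)\cong K_{2^d}$, which the paper leaves implicit.
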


\begin{proof}
    By \Cref{Remark:equivalence}, \Cref{Lemma:MaxDegree} and Brooks' Theorem applied to each component of $\BG_\Z(S)$ we get
    \[
    \beta_\Z(S) = \chi(\BG_\Z(S)) \leq \Delta (\BG_\Z(S)) +1 \leq 2^d.
    \]
    The set $S=\{0,1\}^d$ is an example where this bound is attained.
\end{proof}

Notice that $\beta_\Z(S)= 2^d$ if and only if there exists an $\bx \in S$ that is the endpoint of $2^d-1$ lattice diameter segments, and $\BG_\Z(S)$ is isomorphic to the complete graph on $2^d$ vertices. Thus we conjecture the following:
\begin{conjecture}\label{conj:Borsuk}
    Let $S \subset \Z^d$ be a bounded set. Then $\beta_\Z(S) =2^d$ if and only if $\conv(S)$ is unimodularly equivalent to a $d$-cube $[0,m]^d$ for any $m \in \N$.
\end{conjecture}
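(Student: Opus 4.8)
The plan is to prove the two implications separately. The forward (``if'') direction is a direct computation, while the ``only if'' direction reduces to a classification of the extremal lattice configurations that realize $\beta_\Z(S)=2^d$. Throughout set $D:=\ldiam(S)$, and recall from the remark following \Cref{Thm:Borsuk} that $\beta_\Z(S)=2^d$ can only occur when some component of $\BG_\Z(S)$ is a complete graph $K_{2^d}$: applying \Cref{Thm:Brooks} to each component together with the degree bound $\Delta(\BG_\Z(S))\le 2^d-1$ from \Cref{Lemma:MaxDegree}, an odd-cycle component would give chromatic number $3<2^d$ for $d\ge 2$, so equality forces a $K_{2^d}$. Equivalently, $S$ contains $2^d$ points that are pairwise at lattice distance exactly $D$.

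For the ``if'' direction, assume $\conv(S)$ is unimodularly equivalent to $[0,m]^d$; since unimodular maps preserve $\Z^d$ and the number of lattice points on a segment, I may take $\conv(S)=[0,m]^d$. Its $2^d$ vertices $\{0,m\}^d$ are vertices of $\conv(S)$, hence lie in $S$. For distinct vertices $\bx,\by$ the difference $\bx-\by$ is a nonzero vector with entries in $\{-m,0,m\}$, so $f(\bx,\by)=\gcd(|x_1-y_1|,\dots,|x_d-y_d|)=m$; and any lattice segment in $[0,m]^d$ satisfies $f\le\max_i|x_i-y_i|\le m$, so $\ldiam([0,m]^d)=m$ and $D=m$. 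Thus these $2^d$ vertices are pairwise at lattice distance $D$, forming a copy of $K_{2^d}$ in $\BG_\Z(S)$; with \Cref{Thm:Borsuk} this gives $\beta_\Z(S)=\chi(\BG_\Z(S))=2^d$.

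For the ``only if'' direction, begin with the $2^d$ points $T=\{\bp_0,\dots,\bp_{2^d-1}\}\subseteq S$ pairwise at distance $D$. Translating so $\bp_0=\mathbf{0}$ and rescaling to $U:=\{\mathbf{0}\}\cup\{\bp_i/D: i\ge 1\}$, the identity $f(\lambda\bx,\lambda\by)=|\lambda|f(\bx,\by)$ shows the nonzero elements of $U$ are primitive and that $U$ has $2^d$ points pairwise at lattice distance $1$, so $\ldiam(U)=1$. Exactly as in \Cref{Lemma:MaxDegree}, two elements congruent modulo $2$ would differ by a nonzero even vector and hence have lattice distance $\ge 2$; so the $2^d$ points of $U$ are pairwise incongruent modulo $2$ and form a complete residue system modulo $2$. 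One would then hope to upgrade this to $U\cong\{0,1\}^d$ and to show no point of $S$ escapes $\conv(T)=D\cdot\conv(U)+\bp_0$, yielding $\conv(S)\cong[0,D]^d$.

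The rigidity step---deducing that $2^d$ lattice points pairwise at lattice distance $1$ must be the vertices of a unimodular image of $\{0,1\}^d$---is the crux, and I expect it to be the main obstacle. In fact it fails, and the obstruction is already present in the paper: take $S$ to be the four lattice points of the triangle $T_{t,x}$ from the proof of \Cref{thm:diam-dir}. That proof shows $\ldiam(T_{t,x})=1$ with no three of its four lattice points collinear (which, by the Pick's-theorem argument there, are its only lattice points), so every one of the $\binom{4}{2}$ pairwise lattice distances equals $1$; hence $\BG_\Z(S)\cong K_4=K_{2^2}$ and $\beta_\Z(S)=2^2$, yet $\conv(S)=T_{t,x}$ is a triangle, which is not unimodularly equivalent to any square $[0,m]^2$. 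This shows the statement as literally worded cannot hold, so the true difficulty is not a missing technique but a genuine gap: the invariant characterization of $\beta_\Z(S)=2^d$ is the combinatorial condition that $\BG_\Z(S)$ contain $K_{2^d}$, and its geometric realizations---maximum-size sets of lattice diameter one---form a strictly larger family than unimodular cubes. A sensible repair would add a hypothesis (for instance that $\conv(S)$ be a lattice parallelepiped, or that $S=\conv(S)\cap\Z^d$ be full-dimensional with every lattice point a diameter endpoint) under which the modulo-$2$ analysis above can be pushed to force the cube structure; establishing such a corrected statement is where the remaining work lies.
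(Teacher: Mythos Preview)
The statement you were given is \Cref{conj:Borsuk}, which is a \emph{conjecture} in the paper: the paper provides no proof, only the remark that the two-dimensional case ``can be proven using a characterization of lattice polygons which have four diameter directions'' from \cite{Barany_Furedi}. So there is no paper proof to compare against; what matters is whether your analysis stands on its own.

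Your ``if'' direction is fine: the vertices of the cube are extreme points of $\conv(S)$ and hence lie in $S$, all $\binom{2^d}{2}$ pairwise lattice distances equal $m=\ldiam(S)$, so $\BG_\Z(S)\supseteq K_{2^d}$ and $\beta_\Z(S)=2^d$ by sandwiching with \Cref{Thm:Borsuk}.

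More interestingly, your counterexample to the ``only if'' direction is correct and shows the conjecture, as literally stated, is \emph{false}. For $S=T_{t,x}\cap\Z^2$ you have four lattice points, no three collinear, so $\ldiam(S)=1$ and every pair is at lattice distance $1$; hence $\BG_\Z(S)\cong K_4$ and $\beta_\Z(S)=4=2^2$, while $\conv(S)=T_{t,x}$ is a triangle, not unimodularly equivalent to any $[0,m]^2$. This directly contradicts the paper's assertion that the planar case holds. The likely explanation is that the B\'ar\'any--F\"uredi characterization the paper cites applies only when $\ldiam(P)>1$ (indeed, the six-direction phenomenon occurs only at lattice diameter $1$), so the authors may implicitly intend the conjecture under the hypothesis $\ldiam(S)\geq 2$. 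Your counterexample lives precisely in the excluded regime $\ldiam(S)=1$, and your suggestion that an additional hypothesis is needed is well taken. Whether the conjecture survives under $\ldiam(S)\geq 2$ in dimensions $d\geq 3$ remains genuinely open.
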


In dimension two this conjecture is true and can be proven using a characterization of lattice polygons which have four diameter directions, see \cite[Theorem~2(iii)]{Barany_Furedi}.

\textbf{Acknowledgements}
We are grateful to János Pach, Imre Bárány, Matthias Beck, Ansgar Freyer, Chiara Meroni, and Marie-Charlotte Brandenburg for fruitful discussions, and to Bruce Reznick for providing important references.
This research was partially supported by NSF grants 2348578 and 2434665.



{ \linespread{1.0} \small \providecommand{\bysame}{\leavevmode\hbox to3em{\hrulefill}\thinspace}
\providecommand{\MR}{\relax\ifhmode\unskip\space\fi MR }
\providecommand{\MRhref}[2]{%
  \href{http://www.ams.org/mathscinet-getitem?mr=#1}{#2}
}
\providecommand{\href}[2]{#2}

}

\vspace{2ex}

\emph{Email addresses:} \texttt{\{aebrose,jadeloera,glopezcampos,antor\}@ucdavis.edu}

\end{document}